\documentclass[a4paper,12pt,reqno]{amsart}

\parindent=15pt
\parskip=3pt
\setlength{\textwidth}{7in}
\setlength{\oddsidemargin}{-24pt}
\setlength{\evensidemargin}{-24pt}
\setlength{\textheight}{9.2in}
\setlength{\topmargin}{-5pt}

\usepackage[T1]{fontenc}
\usepackage[utf8]{inputenc}
\usepackage[english]{babel}
\usepackage{calc}
\usepackage{hyperref}
\hypersetup{colorlinks,
linkcolor=blue,
filecolor=green,
urlcolor=blue,
citecolor=blue}

\usepackage{amsmath}
\usepackage{amssymb}
\usepackage{mathrsfs}
\usepackage{eucal}

\usepackage{amsthm}
\usepackage{enumitem}  %Options for enumerate environment. Supersedes 'enumerate'
\setlist{itemsep=\medskipamount,font=\normalfont}

\usepackage[normalem]{ulem}

\DeclareMathAlphabet{\pazocal}{OMS}{zplm}{m}{n}

\usepackage{graphicx}
\usepackage{color}
\usepackage{tikz,pgfplots}  \usetikzlibrary{arrows}
%\pgfplotsset{compat=1.18}
\usepackage{tabularx}

\usepackage{cancel}
\usepackage{soul}

\input xy
\xyoption{all}

\usepackage{multicol}

%\usepackage[backend=bibtex,style=ieee,url=false,sorting=nty]{biblatex} %biber

%\setcounter{biburlnumpenalty}{100}

%\bibliography{library.bib}
\usepackage{csquotes}

%\addtolength{\textwidth}{4cm} \addtolength{\oddsidemargin}{-2cm} %\addtolength{\evensidemargin}{-2cm}
%\textheight=22.15truecm

%\setcounter{secnumdepth}{4}

\newtheorem{theorem}{Theorem}[section]
\newtheorem{lemma}[theorem]{Lemma}
\newtheorem{corollary}[theorem]{Corollary}
\newtheorem{proposition}[theorem]{Proposition}

\theoremstyle{definition}
\newtheorem{definition}[theorem]{Definition}
\newtheorem{example}[theorem]{Example}

\newtheorem{conj}[theorem]{Conjecture}

\newcommand{\comment}[1]{}

\newcommand{\K}{\mathsf{k}}

\def\Z{\mathbb Z}
\def\N{\mathbb N}

\newcommand{\p}{\mathsf{p}}

\newcommand{\gr}{\operatorname{gr}}

\title{Williams' conjecture holds for meteor graphs}

\author[L. G. Cordeiro]{L. G. Cordeiro}
\address{Luiz Gustavo Cordeiro: Departamento de Engenharias da Mobilidade - UFSC - Joinville - SC, Brazil}
\email{luiz.cordeiro@ufsc.br}

\author[E. Gillaspy]{E. Gillaspy}

\address{Elizabeth Gillaspy: 
Mathematical Sciences Department\\
University of Montana\\
USA} \email{elizabeth.gillaspy@mso.umt.edu}

\author[D. Gon\c{c}alves]{D. Gon\c{c}alves}
\address{Daniel Gon\c{c}alves: Departamento de Matem\'{a}tica - UFSC - Florian\'{o}polis - SC, Brazil}
\email{daemig@gmail.com}

\author[R. Hazrat]{R. Hazrat}

\address{Roozbeh Hazrat: 
Centre for Research in Mathematics and Data Science\\
Western Sydney University\\
Australia} \email{r.hazrat@westernsydney.edu.au}

%\date{February 2020}

\subjclass[2010]{18B40,16D25}

\keywords{directed graph, talented monoid, shift equivalent, graded Grothendieck group, Leavitt path algebras, graph $C^*$-algebra}

\begin{document}
\maketitle

\begin{abstract}
      A meteor graph is a connected graph with no sources and sinks consisting of two disjoint cycles and the paths connecting these cycles.
    We prove that two meteor graphs are shift equivalent if and only if they are strongly shift equivalent, if and only if their corresponding Leavitt path algebras are graded Morita equivalent,  if and only if their graded $K$-theories, $K_0^{\gr}$, are $\mathbb Z [x,x^{-1}]$-module isomorphic. As a consequence, the Leavitt path algebras of meteor graphs are graded Morita equivalent if and only if their graph $C^*$-algebras are equivariant Morita equivalent. 
    
    %As a consequence, we show that the graded classification conjecture holds for Leavitt path algebras with Gelfand--Kirillov dimensions 1, 2, and 3. 
\end{abstract}

\section{Introduction}

Shifts of finite type are central objects in the theory of symbolic dynamics; % (e.g., a topological Markov chain). 
an isomorphism between two shifts of finite type is called a {\em conjugacy}. 
Up to conjugacy, every shift of finite type arises from an {\em essential graph} -- that is, a finite connected  directed graph $E$ with no sinks or sources \cite{lindmarcus}. The shift space $X_E$ associated to the  graph $E$ is given by the set of bi-infinite paths in $E$, and the natural shift of the paths to the left. This is called an \emph{edge shift}. %Conversely, any shift of finite type is conjugate (i.e., isomorphic in the corresponding category of shifts of finite type) to an edge shift~\cite{lindmarcus}. 

Determining whether two shifts of finite type $X_E, X_F$ are conjugate is in general a difficult problem, because it requires knowledge of all possible bi-infinite paths in $E$ and $F$. In his seminal paper~\cite{williams}, Williams introduced the notions  of {\em shift equivalence} (SE) and {\em strong shift equivalence} (SSE), which are more tractable.  Williams showed in \cite{williams} that strong shift equivalence completely characterizes conjugacy, and described SSE as the equivalence relation  generated by the graph moves of in/outsplitting and their inverses.  (We describe these moves in detail in Section \ref{dynref} below.)  

%In his seminal paper~\cite{williams}, Williams showed that two shifts of finite type $X_E$ and $X_F$ are conjugate if the graph $E$ can be obtained from the graph $F$ by a series of graph moves, so-called in/out-splitting and their inverses (see~\S\ref{dynref}). Moreover, Williams introduced two notions of equivalence on square matrices, namely
 %\emph{strong shift equivalence}, SSE, and 

 Shift equivalence is a weaker equivalence relation than SSE, and is also more computable, as Krieger established in \cite{krieger}. Williams originally asserted in \cite{williams} that SE $\iff$ SSE.  Although he identified a flaw in the proof that SE $\implies$ SSE  a year later \cite{willwrong}, it took 25 years before a counterexample to SE $\implies$ SSE was found, by Kim and Roush in \cite{kimroush}.  
 %\emph{shift equivalence}, SE, and showed that the graph $E$ can be obtained by the above moves from $F$ if, and only if, the adjacency matrix $A_E$ is strong shift equivalent to $A_F$. 
%Aperiodic and irreducible 7 × 7 matrices that are shift equivalent but not strong shift equivalent were produced, by Kim and Roush~\cite{kimroush}, 25 years after Williams' first attempt to prove that these two notions were equivalent~\cite{willwrong}. 
%The notions of SSE and SE remained among the core concepts in the theory of symbolic dynamics, and their relations persisted as a mystery.  

Identifying classes of shifts for which SE and SSE are equivalent is an open problem. Even in the case of $2\times 2$ matrices, examples are abound~\cite[Example 7.3.13]{lindmarcus}: the matrices 
\[A_k=\begin{pmatrix}
1 & k \\
k-1 &1\\
\end{pmatrix} \text{   and   } 
B_k=\begin{pmatrix}
1 & k(k-1)\\
1 & 1\\
\end{pmatrix},\] are shift equivalent for $k\geq 1$  % k \geq 4
but are only known to be strongly shift equivalent for  $1 \leq k\leq 3$.

The notions of SSE and SE have very interesting interpretations in the theory of operator algebras, and extensive research related to these notions is ongoing (see~\cite{toke, toke2} and the references within).

In this note, we prove that the notions of SSE and SE coincide for the class of meteor graphs. A \emph{meteor graph} is a connected essential graph consisting of two disjoint cycles {and the paths connecting these cycles}; see Definition \ref{def:meteor} below. 
Several tools we use are well  known, namely, Williams' graph moves and Krieger's dimension theory (\S\ref{dynref}). Our key idea is to make a bridge from symbolic dynamics, via the theory of Leavitt path algebras, to the notion of \emph{talented monoids}~\cite{hazli} as follows: If essential graphs $E$ and $F$ are shift equivalent, then their Krieger's dimension groups are isomorphic, $\Delta(E) \cong \Delta(F)$. It is known that $\Delta(E)\cong K_0^{\gr}(L(E))$, the latter being the graded Grothendieck group of the Leavitt path algebra $L(E)$~\cite{arapar, hazd}. Thus we obtain a $\mathbb Z[x,x^{-1}]$-order isomorphism $K_0^{\gr}(L(E)) \cong K_0^{\gr}(L(F))$. The positive cone of the graded Grothendieck group can be described purely based on the underlying graph, via the so-called talented monoid $T_E$ of $E$. Therefore, from a shift equivalence of graphs we obtain a $\mathbb Z$-monoid isomorphism $T_E\cong T_F$. This isomorphism gives us control over elements of the monoids (such as minimal elements, atoms, etc.) and consequently on the geometry of the graphs (the number of cycles, their lengths, etc.). Thus, a careful analysis of the monoid isomorphism allows us to show that $E$ and $F$ can be transformed, via in- and out-splitting and their inverses, %to their \emph{normal forms} (see Definition~\ref{def:normal-form}), which in turn can be transformed 
into each other. Applying Williams' theorem, we obtain that the graphs $E$ and $F$ are strongly shift equivalent and, as a consequence, $L(E)$ is graded Morita equivalent to $L(F)$. This gives the Williams conjecture (SE $\Leftrightarrow$ SSE) and the graded classification conjecture ($K_0^{\gr}(L(E)) \cong K_0^{\gr}(L(F))$ $\Leftrightarrow$ $L(E)$ is graded Morita equivalent to $L(F)$) for meteor graphs. 

The paper is organized as follows:   Section~\ref{sec2} recalls the fundamental concepts we will rely on in this paper, such as $\Gamma$-monoids, the monoids $M_E, T_E$ associated with a directed graph $E$,  in-/out-splitting, and Krieger's dimension group.  To enable our careful analysis of the talented monoid $T_E$, which is a $\Z$-monoid, Section \ref{felhmm} introduces the concept of the $\Gamma$-Archimedean classes of a $\Gamma$-monoid.  The talented monoids of meteor graphs have a particularly nice structure of $\Z$-Archimedean classes, as we establish in Theorem \ref{ghostagain}, and we rely on this to prove our main results in Section \ref{meteorchap}. % Theorem \ref{thm:main}.
%notions of  $\Gamma$-monoids and their $\Gamma$-Archimedean semigroups components. These notions have been studied for the case of monoids; here we extend the concepts to the case of $\Gamma$-monoids. Next, we recall the notion of the talented monoid $T_E$ associated with a directed graph $E$, which is a $\mathbb Z$-monoid. In Section~\ref{felhmm} we study the $\mathbb Z$-Archimedean components of the talented monoid $T_E$ and describe them based on the underlying graph $E$. In particular, we describe the $\mathbb Z$-Archimedean components of the talented monoid associated with a meteor graph. The results of this section are used in Section~\ref{meteorchap}, which is the main part of the paper. In Section~\ref{meteorchap}, we analyze Williams' graph moves in the case of meteor graphs. 
Using the talented monoid associated with a graph, and the tools developed earlier in the paper, we establish Theorem \ref{thm:main} by using Williams' in- and out-splitting to show that the class of meteor graphs is closed under shift equivalence and that SSE and SE coincide for this class of graphs. 

\section{Background: Monoids, Graphs and Algebras}\label{sec2}

\subsection{Monoids, \texorpdfstring{\(\mathbb{Z}\)}{ℤ}-monoids and order ideals}

A \emph{semigroup} is a set with an associative binary operation. Subsemigroups of a semigroup are defined in the usual sense: these are the nonempty subsets closed under the operation.

A \emph{monoid} is a semigroup whose operation has an identity element. A {\em submonoid} of a monoid is a subsemigroup that also contains the identity. Throughout this paper, we are most interested in \emph{commutative monoids}, which are those with a commutative operation. In this case, the operation is written additively (with the symbol \(+\)) and the unit is denoted as zero (\(0\)). A commutative monoid \(M\) is called \emph{conical} if \(x+y=0\) in \(M\) implies \(x=y=0\). The monoid $M$ is called \emph{cancellative} if \(x+a=y+a\) in \(M\) implies \(x=y\).

Given a commutative monoid \(M\), we define the \emph{algebraic preorder} on \(M\) by setting \(x\leq y\) if and only if \(y=x+z\) for some \(z\in M\). If \(M\) is conical and cancellative, then \(\leq\) is a partial order. Conversely, if \(\leq\) is a partial order then \(M\) is conical (but not necessarily cancellative).

A \emph{$\Gamma$-monoid}, where $\Gamma$ is an abelian group,  consists of a monoid \(M\) equipped with a group action of 
$\Gamma$ by monoid homomorphisms. The image of an element \(x\in M\) under the action of a group element $\gamma \in \Gamma$ is denoted by \({}^\gamma x\). Throughout this paper, we work with $\mathbb Z$-monoids. 

%In the case that \(M\) is commutative, these are known as \emph{\(\mathbb{Z}\)-semimodules} (\cite{Golan2003}).

The set of natural numbers is denoted by \(\mathbb{N}=\left\{0,1,2,\ldots\right\}\). Under the usual sum, it is the free monoid generated by a single element.

One of the $\mathbb Z$-monoids we encounter in this paper is the following. 

\begin{definition}\label{cycmond}
    Let \(k\) be a positive integer. The monoid $T=\bigoplus_{i=1}^k \mathbb N$, with the action of $\mathbb Z$ defined by ${}^1(a_1,\dots,a_{k-1},a_k)=(a_k,a_1\dots, a_{k-1})$, is called the \emph{$\mathbb Z$-cyclic monoid of rank \(k\)}.
\end{definition}

\subsection{Graphs and associated monoids}

A \emph{directed graph} \(E\) (which we will often refer to simply as a \emph{graph}) consists of two sets, \(E^0\) and \(E^1\), whose elements are called \emph{vertices} and \emph{edges}, respectively, and two functions \(s,r\colon E^1\to E^0\), called the \emph{source} and \emph{range} maps, respectively. In this paper, we are interested in \emph{row-finite} graphs, that is, graphs such that $|s^{-1}(v)|<\infty$ for all $v\in E^0$.

A {\em sink} in a directed graph $E$ is a vertex $v \in E^0$ with $|s^{-1}(v)| = 0;$ a {\em source} is a vertex $v \in E^0$ with $|r^{-1} v| =0.$  A directed graph $E$ is {\em essential} if it has no sinks or sources.  We will focus our attention on essential graphs in this paper.

Let $E$ be a directed graph. A (finite) \emph{path} in \(E\) is a string
$\alpha=\alpha_1\cdots\alpha_n$
of edges \(\alpha_i\in E^1\) which satisfy $r(\alpha_i) = s(\alpha_{i+1})$ for all $i$. The \emph{length} of the path \(\alpha = \alpha_1 \cdots \alpha_n \) is $n$, and is denoted $|\alpha|$. %the same as the length of it as a string, and is denoted by \(|\alpha|\). 
The source and range maps on edges are extended to paths as
\[s(\alpha_1\cdots\alpha_n)=s(\alpha_1)\qquad\text{and}\qquad r(\alpha_1\cdots\alpha_n)=r(\alpha_n).\]
Vertices are  regarded as paths of length \(0\), with each vertex coinciding with its source and its range.

%We say that a directed graph $E$ is downward directed if for any $v, w\in E^0$ there exists a vertex $u = u(v,w)$ and paths $\alpha_v, \alpha_w$ with range $u$ and source  $v, w$ respectively.  One reason for the interest in these graphs is that they are precisely the graphs whose Leavitt path algebras are prime}

%Note that, if $E$ is finite, then an equivalent way of saying that $E$ is downward directed is to say that there is a single vertex $u$ such that for all $v \in E^0$, there exists a path $\alpha_v \in u E^* v$.  However, the graph $E$ with $E^0 = \Z, E^1 = \{( n, n-1): n \in \Z\}$ shows that these characterizations are not equivalent in general.

A vertex \(v\) is said to \emph{lie} on a path \(\alpha\) if \(v\) is the source or the range of one of the edges which comprise \(\alpha\). The set of vertices that lie on \(\alpha\) is denoted by \(\alpha^0\). 

A path \(\alpha=\alpha_1\cdots\alpha_n\) is \emph{simple} if the restrictions of %the source and range maps 
$s$ and $r$ to \(\left\{\alpha_1,\cdots,\alpha_n\right\}\) are injective.
A \emph{cycle} in \(E\) is a simple path \(\alpha\) of length at least \(1\) %{\color{blue}(This is to discard vertices as cycles when they should not be so)} 
for which \(s(\alpha)=r(\alpha)\).

\begin{definition}\label{def:graphmonoid}
    Let $E$ be a row-finite graph. We define $F_E$ to be the free commutative monoid generated by $E^0$. The \emph{graph monoid} of $E$, denoted $M_E$, is the % commutative monoid generated by $\{v \mid v\in E^0\}$, 
    quotient of $F_E$ by the relation %subject to
\[v=\sum_{e\in s^{-1}(v)}r(e)\]
for every $v\in E^0$ that is not a sink.
\end{definition}

The relations defining $M_E$ can be described more concretely as follows: First, define a relation $\to_1$ on $F_E$ as follows: for $\sum_{i=1}^n v_i  \in F_E$,  %a {\color{red}regular}\footnote{define} vertex $v_j\in E^0$, where $1\leq j \leq n$, 
and any $1\leq j \leq n$, set 
\begin{equation}\label{arrow11}
\sum_{i=1}^n v_i \to_1 \sum_{i\not = j }^n v_i+  \sum_{e\in s^{-1}(v_j)}r(e).
\end{equation}

Then $M_E$ is the quotient of $F_E$ by the {congruence} $\to$ generated by $\to_1$.  To be precise, 
 $\to$ is the smallest reflexive, transitive and additive relation on $F_E$ which contains %{\color{red}(is coarser\footnote{E: finer??} than)} 
 $\to_1$. This relation may be regarded as follows: If \(x=\sum_i x_i\) is an element of \(F_E\), we may ``let a vertex \(x_i\) flow'' to construct the element \(y_1=\left(\sum_{j\neq i}x_j\right)+\sum_{e\in s^{-1}(x_i)}r(e)\) with \(x\to y_1\). Repeating this procedure and ``letting a vertex of \(y_1\) flow'', we construct another element \(y_2\in F_E\) such that \(y_1\to y_2\). In other words, we simply apply the definition of \(\to_1\) to vertices in the representation of elements of \(F_E\). By the definition of \(\to\), every element \(y\in F_E\) such that \(x\to y\) may be constructed from \(x\) by ``letting its vertices flow successively'' in this manner. The following proposition thus becomes clear:

\begin{proposition}\label{prop:flowpath}
Suppose that $x=\sum_i x_i$ and $y=\sum_j y_j$ are elements of $F_E$, where $x_i,y_j\in E^0$. If $x\to y$, then
\begin{enumerate}[label=(\arabic*)]
    \item For every $i$, there exists $j$ such that there is a path in $E$ from $x_i$ to $y_j$;

    \item For every $j$, there exists $i$ such that there is a path in $E$ from $x_i$ to $y_j$.
\end{enumerate}
\end{proposition}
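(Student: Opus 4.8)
The plan is to argue by induction on the length of a derivation of $x\to y$. The first step is to observe that $\to$ is exactly the reflexive--transitive closure of $\to_1$: since $\to_1$ is already compatible with addition --- if $a\to_1 b$ by flowing a vertex in a representation $a=\sum_i a_i$, then for any $c=\sum_l c_l$ in $F_E$ one has $a+c\to_1 b+c$ by flowing the same vertex in the representation $\sum_i a_i+\sum_l c_l$ of $a+c$ --- the reflexive--transitive closure of $\to_1$ is itself reflexive, transitive and additive, and since any relation with these three properties that contains $\to_1$ must contain it, it coincides with $\to$. So $x\to y$ means precisely that there is a finite chain $x=z^{(0)}\to_1 z^{(1)}\to_1\cdots\to_1 z^{(n)}=y$ in $F_E$, and I would induct on $n$.

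For the base case, $n=0$ gives $x=y$ and the length-$0$ paths at the vertices suffice; for $n=1$ I would write the step as $a\to_1 b$ with $a=\sum_{i=1}^m a_i$ and $b=\big(\sum_{i\neq\ell}a_i\big)+\sum_{e\in s^{-1}(a_\ell)}r(e)$ for some index $\ell$. Since $E$ is essential, $a_\ell$ is not a sink, so $s^{-1}(a_\ell)\neq\emptyset$. For (1): each $a_i$ with $i\neq\ell$ is again a summand of $b$, so the length-$0$ path works; and $a_\ell$ is the source of some $e\in s^{-1}(a_\ell)$ whose range $r(e)$ is a summand of $b$, so the single edge $e$ is the required path. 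For (2): every summand of $b$ is either some $a_i$ with $i\neq\ell$, reached from itself, or some $r(e)$ with $e\in s^{-1}(a_\ell)$, reached from the summand $a_\ell$ of $a$ along the edge $e$.

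For the inductive step I would assume the statement for chains of length $n$ and write $x\to z\to_1 y$ with $x\to z$ a chain of length $n$. For (1): a summand $x_i$ of $x$ reaches a summand $z_k$ of $z$ by the inductive hypothesis, and $z_k$ reaches a summand $y_j$ of $y$ by the $n=1$ case applied to $z\to_1 y$; concatenating the two paths gives a path $x_i\to y_j$. Statement (2) is the mirror image: a summand $y_j$ of $y$ is reached from some summand $z_k$ of $z$ by the $n=1$ case, and $z_k$ is reached from some summand $x_i$ of $x$ by the inductive hypothesis applied to $x\to z$; concatenating finishes. Nothing here is genuinely hard; the only points that deserve attention are the identification in the first paragraph of $\to$ with the reflexive--transitive closure of the one-vertex-flow operation (which is what turns a derivation into an honest finite chain of flows), and the single use of essentiality, which ensures the flowed vertex emits an edge and so keeps statement (1) from failing there.
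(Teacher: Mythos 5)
Your proof is correct and is essentially a careful formalization of the argument the paper leaves implicit: the paper states this proposition "becomes clear" from viewing $x\to y$ as a finite chain of single-vertex flows, which is precisely the reduction you carry out via the reflexive--transitive-closure observation and the induction on chain length. The only point worth noting is that your appeal to essentiality to guarantee $s^{-1}(a_\ell)\neq\emptyset$ can be replaced by the observation that, per Definition \ref{def:graphmonoid}, only non-sink vertices are ever flowed, so statement (1) holds for arbitrary row-finite graphs as well.
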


 By the proposition above, a vertex $v$ flows to the vertex $u$ if, and only if, either $v=u$, or there exists $x\in F_E$ such that $u$ % belongs to the decomposition of $x$ in vertices,
 is a summand of $x$, and such that $v\to x$.

The following lemma is essential to the remainder of this paper, as it allows us to translate the relations in the definition of $M_E$ in terms of the simpler relation $\to$ in $F_E$.

\begin{lemma}[{\cite[Lemmas 4.2 and 4.3]{amp}}]\label{confuu}
    Let $E$ be a row-finite graph.
    \begin{enumerate}[label=(\arabic*)]
        \item (The Confluence Lemma) If $a,b\in F_E\setminus\left\{0\right\}$, then $a=b$ in $M_E$ if and only if there exists $c\in F_E$ such that $a\to c$ and $b\to c$. (Note that, in this case, $a=b=c$ in $M_E$.)

        \item If $a=a_1+a_2$ and $a\to b$ in $F_E$, then there exist $b_1,b_2\in F_E$ such that $b=b_1+b_2$, $a_1\to b_1$ and $a_2\to b_2$.
    \end{enumerate}
\end{lemma}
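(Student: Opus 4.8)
The plan is to express the relation $\to$ entirely in terms of the one-step move $\to_1$ and then run a confluence argument. The first observation is that $\to_1$ is already compatible with addition: if $x=\sum_i x_i \to_1 y$ by letting the summand $x_j$ flow, then for any $z\in F_E$ the very same move turns $x+z$ into $y+z$. Hence the reflexive--transitive closure of $\to_1$ is itself additive, so it coincides with the smallest reflexive, transitive, additive relation containing $\to_1$; that is, $x\to y$ if and only if there is a finite chain $x=d_0\to_1 d_1\to_1\cdots\to_1 d_n=y$ (with $n=0$ permitted). For the same reason, the equivalence relation generated by $\to_1$ is automatically a congruence, so it equals the congruence generated by $\to_1$, and therefore $a=b$ in $M_E$ if and only if $a$ and $b$ can be joined by a finite sequence $a=z_0,z_1,\dots,z_m=b$ in which, for each $i$, either $z_{i-1}\to_1 z_i$ or $z_i\to_1 z_{i-1}$.

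With this, part~(2) follows by induction on the length $n$ of a chain $a=d_0\to_1\cdots\to_1 d_n=b$. For $n=0$ take $b_i=a_i$. For $n\geq 1$, the first move $a\to_1 d_1$ lets some summand $v$ of $a=a_1+a_2$ flow, and $v$ occurs in $a_1$ or in $a_2$; say $a_1=v+a_1'$. Setting $c_1=a_1'+\sum_{e\in s^{-1}(v)}r(e)$ and $c_2=a_2$ we have $d_1=c_1+c_2$, $a_1\to_1 c_1$ and $a_2=c_2$. Applying the inductive hypothesis to the chain $d_1\to b$ of length $n-1$, split along $d_1=c_1+c_2$, we obtain $b=b_1+b_2$ with $c_1\to b_1$ and $c_2\to b_2$, whence $a_1\to b_1$ and $a_2\to b_2$.

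For part~(1), the implication $(\Leftarrow)$ is immediate, since every $\to_1$-move is an instance of a defining relation of $M_E$, so $a\to c$ and $b\to c$ give $a=c=b$ in $M_E$. For $(\Rightarrow)$ the heart of the matter is that $\to$ is confluent. I would first verify one-step local confluence: if $a\to_1 b$ and $a\to_1 c$, then either the two moves act on the same summand-occurrence (so $b=c$), or they act on distinct occurrences, in which case they commute --- writing $a=a'+v+w$ with $v,w$ the two vertices flowed, one flows the remaining vertex in each of $b$ and $c$ to reach the common element $d=a'+\sum_{e\in s^{-1}(v)}r(e)+\sum_{f\in s^{-1}(w)}r(f)$. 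Thus ${\to_1}\cup{=}$ has the diamond property, and by the standard strip-lemma induction its reflexive--transitive closure $\to$ inherits the diamond property: $a\to b$ and $a\to c$ imply $b\to d$ and $c\to d$ for some $d$. Finally, given a zig-zag $a=z_0,\dots,z_m=b$ witnessing $a=b$ in $M_E$, I induct on $m$: by induction there is $c'$ with $z_0\to c'$ and $z_{m-1}\to c'$; if the last move is $z_m\to_1 z_{m-1}$ then $z_m\to c'$ already, so take $c=c'$; if it is $z_{m-1}\to_1 z_m$, apply confluence to $z_{m-1}\to c'$ and $z_{m-1}\to z_m$ to get $c$ with $c'\to c$ and $z_m\to c$, so $a\to c$ and $b\to c$. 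The hypothesis $a,b\neq 0$ excludes degenerate configurations involving $0$; for essential $E$ --- our setting of interest --- no move decreases the number of summands, so this issue does not arise.

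The step I expect to be the main obstacle is the passage from the easily-checked one-step local confluence to global confluence of $\to$. Since $\to$ need not terminate --- a cycle in $E$ produces infinite flow sequences --- Newman's lemma is unavailable; what saves the argument is that the local diamond we obtain is the strong form with at most one step on each side, for which the reflexive--transitive closure keeps the diamond property outright. Everything else is routine bookkeeping.
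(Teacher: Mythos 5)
Your argument is correct, and since the paper does not prove this lemma itself (it is quoted from \cite[Lemmas~4.2 and 4.3]{amp}), the right comparison is with that source: your proof is essentially the one given there, namely reducing $\to$ to chains of $\to_1$-moves via additive compatibility, proving part~(2) by induction on chain length, and deducing part~(1) from the one-step diamond property of $\to_1$ (which, being the strong form, passes to the reflexive--transitive closure without any termination hypothesis, exactly as you note). The only quibble is your closing remark: no $\to_1$-move ever decreases the number of summands in \emph{any} row-finite graph, not just essential ones, since sinks are never flowed and a non-sink has $s^{-1}(v)\neq\emptyset$; so the caveat about $0$ is vacuous rather than resolved by essentiality.
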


Next we define the \emph{talented monoid} $T_E$ of $E$, which is believed to encode the graded structure of the Leavitt path algebra $L_{\mathsf k}(E)$ (see Conjecture~\ref{conjehfyhtr}) and, later in the paper, plays the role of a bridge between symbolic dynamics and the theory of Leavitt path algebras.

\begin{definition}\label{talentedmon}
Let $E$ be a row-finite directed graph. The \emph{talented monoid} of $E$, denoted $T_E$, is the commutative 
monoid generated by $\{v(i) \mid v\in E^0, i\in \mathbb Z\}$, subject to
\[v(i)=\sum_{e\in s^{-1}(v)}r(e)(i+1)\]
for every $i \in \mathbb Z$ and every $v\in E^{0}$ that is not a sink. The additive group $\mathbb{Z}$ of integers acts on $T_E$ via monoid automorphisms by shifting indices: For each $n,i\in\mathbb{Z}$ and $v\in E^0$, define ${}^n v(i)=v(i+n)$, which extends to an action of $\mathbb{Z}$ on $T_E$. Throughout the paper we denote the elements $v(0)$ in $T_E$ by $v$.
\end{definition}

The talented monoid of a graph can also be seen as the graph monoid of the so-called \emph{covering graph} of $E$, denoted by $\overline{E}$: %, which we now define: 
we have   $\overline{E}^0=E^0\times\mathbb{Z}$,  $\overline{E}^1=E^1\times\mathbb{Z}$ and the range and source maps are given by
\[s(e,i)=(s(e),i),\qquad r(e,i)=(r(e),i+1).\] Note that the graph monoid $M_{\overline E}$ has a natural $\mathbb Z$-action by ${}^n (v,i)= (v,i+n)$. The following theorem allows us to use the Confluence Lemma~\ref{confuu} for the talented monoid $T_E$ by identifying it with $M_{\overline E}$. 

\begin{theorem}[{\cite[Lemma 3.2]{hazli}}]\label{hgfgfgggf}
    The correspondence
    \begin{align*}
        T_{E} &\longrightarrow M_{\overline{E}}\\
        v(i) &\longmapsto (v,i)
    \end{align*}
    induces a $\mathbb Z$-monoid isomorphism.
\end{theorem}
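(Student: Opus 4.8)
The plan is to observe that both $T_{E}$ and $M_{\overline{E}}$ are, by their very definitions, commutative monoids specified by an explicit presentation, and that these two presentations become literally identical under the obvious bijection of generating sets. Concretely, $T_{E}$ is the quotient of the free commutative monoid on $\{v(i) : v\in E^0,\ i\in\mathbb Z\}$ by the congruence generated by the relations $v(i)=\sum_{e\in s^{-1}(v)}r(e)(i+1)$, one for each $i\in\mathbb Z$ and each non-sink $v\in E^0$; while $M_{\overline{E}}=F_{\overline{E}}/{\to}$ is the quotient of the free commutative monoid $F_{\overline{E}}$ on $\overline{E}^0=E^0\times\mathbb Z$ by the congruence generated by the relations $(v,i)=\sum_{(e,j)\in s^{-1}(v,i)}r(e,j)$, one for each non-sink $(v,i)\in\overline{E}^0$. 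I would introduce the bijection $\varphi_0\colon\{v(i)\}\to\overline{E}^0$, $v(i)\mapsto(v,i)$, which extends to an isomorphism of the two underlying free commutative monoids, and check that it carries the first set of defining relations onto the second.

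The only computation to carry out is the description of the edges of $\overline{E}$ emanating from a vertex $(v,i)$. By definition of the source map on $\overline{E}$ we have $s^{-1}(v,i)=\{(e,j)\in E^1\times\mathbb Z : s(e)=v,\ j=i\}=\{(e,i) : e\in s^{-1}(v)\}$, and for such an edge $r(e,i)=(r(e),i+1)$. In particular $(v,i)$ is a sink in $\overline{E}$ if and only if $v$ is a sink in $E$, and when $v$ is not a sink the relation defining $M_{\overline{E}}$ at $(v,i)$ reads $(v,i)=\sum_{e\in s^{-1}(v)}(r(e),i+1)$ — which is exactly the image under $\varphi_0$ of the relation $v(i)=\sum_{e\in s^{-1}(v)}r(e)(i+1)$ defining $T_E$ at $v(i)$. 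Thus the two \emph{congruences} correspond to one another under $\varphi_0$.

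It follows from the universal property of monoid presentations that $\varphi_0$ induces a monoid homomorphism $\varphi\colon T_E\to M_{\overline{E}}$, $v(i)\mapsto(v,i)$, and, symmetrically, that $(v,i)\mapsto v(i)$ induces a monoid homomorphism $M_{\overline{E}}\to T_E$; being mutually inverse on generators, these are mutually inverse isomorphisms. For $\mathbb Z$-equivariance, note that $n\in\mathbb Z$ sends $v(i)$ to $v(i+n)$ in $T_E$ and $(v,i)$ to $(v,i+n)$ in $M_{\overline{E}}$, so $\varphi({}^n v(i))=\varphi(v(i+n))=(v,i+n)={}^n\varphi(v(i))$; since both sides are monoid homomorphisms in the generator $v(i)$, this shows $\varphi$ intertwines the two actions. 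There is no genuine obstacle here: the entire content is the bookkeeping identification $s^{-1}(v,i)\leftrightarrow s^{-1}(v)$ together with the matching of sinks, after which everything is formal. The only point requiring mild care is to phrase the argument in terms of congruences on free commutative monoids rather than manipulating elements ``up to relations'' informally.
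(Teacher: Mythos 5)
Your argument is correct: the defining presentation of $T_E$ and the presentation of $M_{\overline{E}}$ as a graph monoid match generator-for-generator and relation-for-relation under $v(i)\mapsto(v,i)$ (via the identification $s^{-1}(v,i)=\{(e,i):e\in s^{-1}(v)\}$, $r(e,i)=(r(e),i+1)$, and the matching of sinks), and checking $\mathbb Z$-equivariance on generators suffices since both actions are by monoid homomorphisms. The paper itself imports this statement from the cited lemma of Hazrat--Li without reproducing a proof, and your presentation-matching argument is precisely the standard one.
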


Lemma~5.5 of \cite{arahazrat} establishes that since $\overline{E}$ has no directed cycles, $M_{\overline{E}} = T_E$ is a cancellative monoid for all $E.$

\subsection{Leavitt path algebras}\label{leviig}

Let $E=(E^{0}, E^{1}, r, s)$ be a directed graph.  One can associate an algebra with coefficients in a field $\K$, $L_\K(E)$, to the graph $E$, which is called the \emph{Leavitt path algebra}. We refer the reader to the book of Abrams, Ara, and Siles Molina \cite{lpabook} for the theory of Leavitt path algebras, its relation with the algebras defined by William Leavitt, and all the standard terminologies we use here. Throughout the paper, we simply write $L(E)$ for the Leavitt path algebra associated to a directed graph $E$ with coefficients from a field $\K$. 

Finding a complete invariant for the classification of Leavitt path algebras is an ongoing endeavor. %We briefly mention t
The Graded Classification Conjecture~(\cite{mathann,hazd}, \cite[\S 7.3.4]{lpabook})  roughly predicts that the graded Grothendieck group $K_0^{\gr}$ classifies
Leavitt path algebras of finite graphs, up to graded isomorphism. The conjecture is closely related to Williams' conjecture (see~\S\ref{dynref}).

In order to state the Graded Classification Conjecture, we first recall the definition of the graded Grothendieck group of a $\Gamma$\!-graded ring. 
%The graded Grothendieck group of a $\mathbb Z$-graded ring has a natural $\mathbb Z[x,x^{-1}]$-module structure, which is integral to the graded classification conjecture.  To describe this 
Given a $\Gamma$\!-graded ring $A$ with identity and a graded finitely generated projective (right) $A$-module $P$, let $[P]$ denote the class of graded $A$-modules graded isomorphic to $P$. Then the monoid  
\begin{equation}\label{zhongshan1}
\mathcal V^{\gr}(A)=\big \{ \, [P] \mid  P  \text{ is a graded finitely generated projective A-module} \, \big \}
\end{equation}
has a $\Gamma$\!-module structure defined as follows: for $\gamma \in \Gamma$ and $[P]\in \mathcal V^{\gr}(A)$, $\gamma .[P]=[P(\gamma)]$. 

The group completion of $\mathcal V^{\gr}(A)$ is called the \emph{graded Grothendieck group} and is denoted by $K^{\gr}_0(A)$.  The $\Gamma$-\!module structure on $\mathcal V^{\gr}(A)$ induces %which as the above discussion shows {\color{red} which above discussion? maybe we can reference this a bit more precisely?} is 
a $\Z[\Gamma]$-module structure on the group $K^{\gr}_0(A)$. In particular, the graded Grothendieck group of a $\mathbb Z$-graded ring has a natural $\mathbb Z[x,x^{-1}]$-module structure. 

\begin{conj}\label{conjehfyhtr}[{\sc The Graded Classification Conjecture}]
Let $E$ and $F$ be finite graphs. Then the following are equivalent.

\begin{enumerate}[label=(\arabic*)]
        \item The Leavitt path algebras \(L(E)\) and \(L(F)\) are graded Morita equivalent,

        \item The $C^*$-algebras $C^*(E)$ and $C^*(F)$ are equivariant Morita equivalent,

        \item There is an order-preserving 
        $\mathbb Z[x,x^{-1}]$-module isomorphism
$K_0^{\gr}(L(E))\rightarrow K_0^{\gr}(L(F))$,

        \item The talented monoids \(T_E\) and \(T_F\) are \(\mathbb{Z}\)-isomorphic.

    \end{enumerate}

\end{conj}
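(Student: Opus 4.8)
The plan is to prove the four conditions equivalent by establishing the cycle $(1)\Rightarrow(3)\Leftrightarrow(4)$ together with $(2)\Leftrightarrow(3)$, and then closing the loop with the deep implication $(4)\Rightarrow(1)$. Three of these links are formal consequences of functoriality and of the dictionary between graded modules and the positive cone of graded $K$-theory; essentially all of the mathematical content is concentrated in the last implication, which is where I expect the main obstacle to lie.

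I would dispatch the soft links first. For $(1)\Rightarrow(3)$, the functor $K_0^{\gr}$ carries a graded Morita equivalence to an isomorphism of graded $K$-groups preserving both the natural order and the $\mathbb Z[x,x^{-1}]$-module structure induced by the grading shift, so $(1)$ forces an order-preserving $\mathbb Z[x,x^{-1}]$-module isomorphism. For $(3)\Leftrightarrow(4)$, I would use that the positive cone of $K_0^{\gr}(L(E))$ is exactly $\mathcal V^{\gr}(L(E))$, which is $\mathbb Z$-monoid isomorphic to $T_E$; since $T_E\cong M_{\overline E}$ is cancellative (via Theorem \ref{hgfgfgggf} and the Confluence Lemma \ref{confuu}), it embeds as the positive cone into its group completion $K_0^{\gr}(L(E))$. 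An order-preserving $\mathbb Z[x,x^{-1}]$-module isomorphism must then carry positive cone to positive cone, hence restrict to a $\mathbb Z$-monoid isomorphism $T_E\cong T_F$; conversely any such monoid isomorphism group-completes to an order- and action-preserving isomorphism. Finally, for $(2)\Leftrightarrow(3)$ I would identify the gauge-equivariant $K$-theory of $C^*(E)$, as an ordered $\mathbb Z[x,x^{-1}]$-module, with $K_0^{\gr}(L(E))$, and invoke the gauge-equivariant classification results for graph $C^*$-algebras so that an equivariant Morita equivalence is detected precisely by this invariant.

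The heart of the matter is $(4)\Rightarrow(1)$, which I would attack through symbolic dynamics. By Krieger's dimension theory, an order-isomorphism $K_0^{\gr}(L(E))\cong K_0^{\gr}(L(F))$ of $\mathbb Z[x,x^{-1}]$-modules --- equivalently, by the previous paragraph, a $\mathbb Z$-monoid isomorphism $T_E\cong T_F$ --- is the same data as a shift equivalence of the adjacency matrices of $E$ and $F$. On the algebraic side, Williams' description of strong shift equivalence as the relation generated by in- and out-splittings, together with the fact that each such elementary move induces a graded Morita equivalence of the associated Leavitt path algebras, shows that strong shift equivalence implies $(1)$. The task therefore reduces to producing, from a shift equivalence alone, a graded Morita equivalence of Leavitt path algebras.

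The main obstacle is that one cannot simply upgrade shift equivalence to strong shift equivalence: by the Kim--Roush counterexamples the implication SE $\Rightarrow$ SSE is false in general, so a naive reduction to in/out-splittings cannot work for arbitrary finite graphs. This forces graded Morita equivalence to be a relation strictly coarser than strong shift equivalence, and the genuine content of $(4)\Rightarrow(1)$ is to build the graded Morita context \emph{directly} from the monoid isomorphism $T_E\cong T_F$ rather than through conjugacy of shifts --- for instance by enlarging the repertoire of graph moves to one that preserves $T_E$ (hence graded Morita equivalence) without generating all of strong shift equivalence, or by realizing the isomorphism at the level of the graph groupoids. In full generality this step is open; it becomes tractable precisely when the $\mathbb Z$-Archimedean structure of $T_E$ is rigid enough that the monoid isomorphism can be matched, move by move, with a sequence of (in/out-)splittings, and it is this rigidity that the present strategy must exploit.
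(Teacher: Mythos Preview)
The statement you are addressing is a \emph{conjecture}; the paper does not claim to prove it in general, and indeed your own proposal concedes that the implication $(4)\Rightarrow(1)$ is ``open in full generality''. So what you have written is not a proof but, correctly, a diagnosis of which links are formal and where the actual content lies. There is no proof in the paper to compare against; the paper establishes the conjecture only for the restricted class of meteor graphs (Theorem~\ref{alergy1} and Corollary~\ref{ruiz}), and there the hard step $(4)\Rightarrow(1)$ is carried out precisely by the mechanism you anticipate in your last paragraph: the $\mathbb Z$-Archimedean structure of $T_E$ is rigid enough to match the monoid isomorphism with a sequence of in/out-splittings, hence to upgrade shift equivalence to strong shift equivalence.

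Your treatment of the soft implications $(1)\Rightarrow(3)$ and $(3)\Leftrightarrow(4)$ is correct and agrees with the paper's argument in the meteor case.

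There is, however, a second genuine gap in your outline beyond $(4)\Rightarrow(1)$. For $(2)\Leftrightarrow(3)$ you ``invoke the gauge-equivariant classification results for graph $C^*$-algebras so that an equivariant Morita equivalence is detected precisely by this invariant''. The direction $(2)\Rightarrow(3)$ is indeed formal---equivariant Morita equivalence induces an ordered $\mathbb Z[x,x^{-1}]$-module isomorphism on $K_0^{\mathbb T}$, which coincides with $K_0^{\gr}(L(E))$---and the paper uses exactly this in Corollary~\ref{ruiz}. But the converse $(3)\Rightarrow(2)$ is not a consequence of any available classification theorem for arbitrary finite graphs; it is itself part of the conjecture. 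In the meteor case the paper does \emph{not} obtain $(3)\Rightarrow(2)$ independently: it first passes through strong shift equivalence (i.e., through the hard step $(4)\Rightarrow(1)$) and only then quotes \cite{pask,ef} to see that each in/out-splitting yields an equivariant Morita equivalence of graph $C^*$-algebras. So in your scheme $(3)\Rightarrow(2)$ should be listed alongside $(4)\Rightarrow(1)$ as open, not among the formal links.
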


Furthermore, if the isomorphism between the groups in (4) or the monoids in (3) is pointed, i.e., preserves the order-unit, then the algebras should be (graded/gauge invariant) isomorphic (see \cite{mathann}).  

We refer the reader to \cite{arapar, guido,guido1,guidowillie, toke,eilers2, vas} for works on the graded classification conjecture and~\cite{comb, ef} for equivariant Morita equivalence of $C^*$-algebras.

\subsection{Symbolic Dynamics}\label{dynref}

The notion of shift equivalence for matrices was introduced by Williams in \cite{williams} (see also~\cite[\S7]{lindmarcus}) in an attempt to provide computable machinery for determining the conjugacy between two shifts of finite type. Recall that two square nonnegative integer matrices $A$ and $B$ are called {\it elementary shift equivalent}, and denoted by $A\sim_{ES} B$, if there are nonnegative matrices $R$ and $S$ such that $A=RS$ and $B=SR$. 
The equivalence relation $\sim_S$  on square nonnegative integer matrices generated by elementary shift equivalence is called {\it strong shift equivalence}. The weaker notion of shift equivalence is defined as follows. The nonnegative integer matrices $A$ and $B$ are called {\it shift equivalent} if there are nonnegative matrices $R$ and $S$ such that $A^l=RS$ and $B^l=SR$, for some $l\in \mathbb N$, and 
$AR=RB$ and $SA=BS$. 

The {\em adjacency matrix} $A_E\in M_{E^0}(\Z_{\geq 0})$ of a directed graph $E$ provides the link between symbolic dynamics and directed graphs.  By definition, $A_E$ is a square matrix with \[ A_E(v,w) = \left| \left( s^{-1}(v) \cap r^{-1}(w)\right)\right| .\]  Conversely,  any $A \in M_n(\Z_{\geq 0})$ can be interpreted as the adjacency matrix on a graph $E$ with $|E^0| = n$;  $E^1$ consists of precisely $A(i,j)$ edges from vertex $i$ to vertex $j$, for all $1 \leq i, j \leq n$.

Identifying a square nonnegative integer matrix with its associated graph (and the graph with its adjacency matrix), Williams showed that two matrices $A$ and $B$ are strongly shift equivalent if and only if one can reach from $A$ to be $B$ with certain graph moves. We recall these in and out-splitting graph moves here, as we will employ them throughout the text. 

\subsection*{Move (I): In-splitting}

\begin{definition}\cite[Definition 6.3.20]{Luiz}\label{def:insplit}
    Let $E$ be a directed graph. For each $v\in E^0$ with $r^{-1}(v)\neq \emptyset$, take a partition $\left\{\mathscr{E}^v_1,\ldots,\mathscr{E}^v_{m(v)}\right\}$ of $r^{-1}(v)$. We form a new graph $F$ as follows.  Set
    \[F^0=\left\{v_i \mid v\in E^0,1\leq i\leq m(v)\right\}\cup\left\{v \mid r^{-1}(v)=\emptyset \right\}\]
    \[F^1=\left\{e_j \mid e\in E^1,1\leq j\leq m(s(e))\right\}\cup\left\{e \mid r^{-1}(s(e))=\emptyset \right\},\]
    and define the source and range maps  as follows: If $r^{-1}(s(e))\neq\emptyset $, and %choose $i$ such that 
    $e\in\mathscr{E}^{r(e)}_i$, then
    \[s(e_j)=s(e)_j,\qquad r(e_j)=r(e)_i, \text{ for all } 1\leq j \leq m(s(e)).\]
    If $r^{-1}(s(e))=\emptyset$, set $s(e)$ as the original source of $e$, and $r(e)=r(e)_i$, where %$i$ is chosen so that 
    $e\in\mathscr{E}^{r(e)}_i$.
    
    The graph $F$ is called an \emph{in-split} of $E$, and conversely $E$ is called an \emph{in-amalgam} of $F$. We say that $F$ is formed by performing \emph{Move (I)} on $E$.
\end{definition}

\subsection*{Move (O): Out-splitting}

The notions dual to those of in-split and in-amalgam are called \emph{out-split} and \emph{out-amalgam}. Given a graph $E=(E^0,E^1,s,r)$, the \emph{transpose graph} is defined as $E^*=(E^0,E^1,r,s)$.

\begin{definition}\cite[Definition 6.3.23]{lpabook}\label{def:outsplit}
    A graph $F$ is an \emph{out-split} (\emph{out-amalgam}) of a graph $E$ if $F^*$ is an in-split (in-amalgam) of $E^*$, and we say that $F$ is formed by performing \emph{Move (O)} on $E$.
    
    % More specifically, we consider, for every $v\in E^0$ with $s^{-1}(v)\neq 0$, a partition $\left\{\mathscr{E}^1_v,\ldots,\mathscr{E}^{m(v)}_v\right\}$ of $s^{-1}(v)$. The out-split $F$ is formed as follows:
    % \[F^0=\left\{v^i \mid v\in E^0,1\leq i\leq m(v)\right\}\cup\left\{v \mid s^{-1}(v)=\varnothing\right\}\]
    % \[F^1=\left\{e^j \mid e\in E^1,1\leq j\leq m(r(e))\right\}\cup\left\{e \mid s^{-1}(r(e))=\varnothing\right\},\]
    % with source and range maps defined as follows: If $s^{-1}(r(e))\neq\varnothing$, choose $i$ such that $e\in\mathscr{E}^i_{s(e)}$, and set
    % \[s(e^j)=s(e)^i,\qquad r(e^j)=r(e)^j,\text{ where } 1\leq j \leq m(r(e)).\]
    % If $s^{-1}(r(e))=\varnothing$, set $r(e)$ as the original range of $e$, and $s(e)=s(e)_i$, where $i$ is chosen so that $e\in\mathscr{E}^i_{s(e)}$.
\end{definition}

We emphasize that in this paper, we require the sets $\mathscr E^i_v$ used in in- or out-splitting to be non-empty.

%Given a subset \(F\) of \(E^0\), we let
%\[\overline{F}=\sum_{v\in F}{}^0v.\]
%Similarly, if \(\alpha\) is a path in \(E\), we let
%\[\overline{\alpha}=\sum_{v\in \alpha}{}^0v.\]
%%% problem: v \in \alpha implies \alpha is a set of vertices, but then if \alpha passes through some vertices multiple times, how often do we count the repeated vertices in this sum?

Now that we have defined in- and out-splitting, we can give the precise form of Williams' result.
Recall \cite[Definition 2.2.5]{lindmarcus} that the shift space $X_E$ associated to a directed graph $E$ (equivalently, the shift space $X_{A_E}$ associated to the adjacency matrix $A_E$ of $E$) consists of the set of infinite paths $\alpha = (\alpha_n)_{n\in \Z}$ in $E$, equipped with the natural left shift.

\begin{theorem}[{\sc Williams}~\cite{williams}]\label{willimove}
Let $A$ and $B$ be two square nonnegative integer matrices and let $E$ and $F$ be two essential graphs.

\begin{enumerate}[label=(\arabic*)]

\item  $X_A$ is conjugate to $X_B$ if and only if $A$ is strongly shift equivalent to $B$. 

\item  $X_E$ is conjugate to $X_F$ if and only if and  $E$ can be obtained from $F$ by a sequence of in/out-splittings and their inverses. 

\end{enumerate}

\end{theorem}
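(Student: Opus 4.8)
The statement above is a foundational theorem of symbolic dynamics, and in what follows we use it as cited; were one to prove it from scratch, the plan would run as follows. The two items are really two presentations of one fact, linked by the adjacency-matrix dictionary $E\leftrightarrow A_E$, $X_E=X_{A_E}$, so the first step is to make that dictionary precise. I would check that performing Move~(I) (resp.\ Move~(O)) with respect to a partition exhibits an elementary shift equivalence $A_E=SR$, $A_F=RS$, where $R$ is the $0$--$1$ ``division matrix'' recording which split vertex is a copy of which original vertex and $S$ records how the edges are distributed among the split vertices (with the roles of $R$, $S$ and of rows and columns dualized for out-splitting). Conversely, a general elementary shift equivalence $A=RS$, $B=SR$ can be routed through the bipartite matrix $\left(\begin{smallmatrix}0&R\\ S&0\end{smallmatrix}\right)$, which is reached from each of $A$ and $B$ by a sequence of splittings. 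With this dictionary in hand, (1) and (2) become interchangeable, so it suffices to prove (2).

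For the easy half of (2) --- that splittings and their inverses produce conjugacies --- I would write down explicit mutually inverse sliding block codes. If $F$ is an in-split of $E$ as in Definition~\ref{def:insplit}, the map $X_F\to X_E$ erases the splitting subscripts, while $X_E\to X_F$ recovers the subscript of an edge by inspecting the preceding edge (one coordinate of memory), since the partition is of $r^{-1}(v)$; dually, an out-split needs one coordinate of anticipation. Because conjugacy is an equivalence relation closed under composition, any finite sequence of Moves~(I), (O) and their inverses is a conjugacy; combined with the previous paragraph, each such move is an elementary shift equivalence, so the composite is a strong shift equivalence, which gives the ``if'' directions of both (1) and (2).

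The substantive work is the converse of (2): every conjugacy $X_E\to X_F$ is a finite composition of splittings and inverse splittings. Here I would follow the Decomposition Theorem route. First, passing to higher block presentations --- which are themselves composites of out-splittings, hence harmless --- recode the conjugacy so that it has memory $0$ and anticipation $1$. Then argue that such a near-one-block code factors as an in-amalgamation followed by an out-splitting: the amalgamation is dictated by grouping the domain vertices according to their images under the code, and the splitting resolves the residual ambiguity. I expect the main obstacle to be exactly this last factorization --- verifying that the partition extracted from a recoded conjugacy genuinely satisfies the non-degeneracy (non-empty blocks) and compatibility hypotheses of Definitions~\ref{def:insplit} and \ref{def:outsplit}, together with the careful bookkeeping of memory and anticipation under composition. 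Once (2) is established, the converse of (1) is immediate: read each move in the decomposition as an elementary shift equivalence and compose. Complete proofs are in \cite{williams} and \cite[\S 7]{lindmarcus}.
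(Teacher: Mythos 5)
The paper does not prove this theorem; it is quoted from Williams and Lind--Marcus, with the proof deferred to \cite[Chapter 7]{lindmarcus}. Your sketch correctly reproduces the standard argument from those references (the splitting/elementary-shift-equivalence dictionary, the sliding-block codes for Moves (I) and (O), and the Decomposition Theorem for the converse), so it is essentially the same approach as the cited proof and raises no issues.
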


For a proof of this theorem, see \cite[Chapter 7]{lindmarcus}.

Although strong shift equivalence characterizes conjugacy of shifts of finite type, there is no general algorithm for deciding whether $X_A$ and $X_B$ are strongly shift equivalent, even for $2 \times 2$ matrices $A,B.$ The weaker notion of shift equivalence  is easier to analyze, as Krieger established.  In~\cite{krieger}, he 
%Krieger in~\cite{krieger} 
proposed an invariant for classifying the irreducible shifts of finite type up to shift equivalence. Surprisingly, Krieger's dimension group in symbolic dynamics turns out to be expressible as the graded Grothendieck group of a  Leavitt path algebra. As this fact will be key to our solution to the Williams problem for meteor graphs, we pause to recall the details of this connection.

Let $A$ be a nonnegative integral $n\times n$-matrix. Consider the sequence of free ordered abelian groups $\mathbb Z^n \to \Z^n \to \cdots$, where the ordering in $\mathbb Z^n$ is defined point-wise (i.e.,  the positive cone is $\mathbb N^n$).  Then  $A$ acts as an order-preserving group homomorphism on this sequence, as follows: 
\[\mathbb Z^n \stackrel{A}{\longrightarrow} \mathbb Z^n \stackrel{A}{\longrightarrow}  \mathbb Z^n \stackrel{A}{\longrightarrow} \cdots.
\]
 The direct limit of this system, $\Delta_A:= \varinjlim_{A} \mathbb Z^n$, along with its positive cone, $\Delta^+$, and the automorphism which is induced by $A$ on the direct limit, 
$\delta_A:\Delta_A \rightarrow \Delta_A$, is the invariant considered by Krieger, now known as  \emph{Krieger's dimension group}.  Following~\cite{lindmarcus}, we denote this triple by $(\Delta_A, \Delta_A^+,  \delta_A)$. 

The following theorem was proved by Krieger (\cite[Theorem~4.2]{krieger}, %and~\cite[Theorem~7.5.8]{lindmarcus}, 
see also~\cite[\S7.5]{lindmarcus} for a detailed algebraic treatment). 

\begin{theorem}\label{kriegeror}
Let $A$ and $B$ be two square nonnegative integer matrices. Then, $A$ and $B$ are shift equivalent if and only if 
\[(\Delta_A, \Delta_A^+, \delta_A) \cong (\Delta_B, \Delta_B^+, \delta_B).\]
\end{theorem}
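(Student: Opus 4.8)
The plan is to prove Krieger's theorem directly from the direct-limit description, treating the two implications separately. I will write $\iota^A_k\colon\Z^n\to\Delta_A$ for the canonical map out of the $k$-th copy of $\Z^n$, so that $\iota^A_k=\iota^A_{k+1}A$, every element of $\Delta_A$ is $\iota^A_k(v)$ for some $k\ge 0$, the positive cone is $\Delta_A^+=\bigcup_{k\ge 0}\iota^A_k(\N^n)$, and $\iota^A_k(v)=0$ iff $A^jv=0$ for some $j$. The induced automorphism satisfies $\delta_A\iota^A_k=\iota^A_kA$ and $\delta_A^{-1}\iota^A_k=\iota^A_{k+1}$, so $\delta_A$ is an order-automorphism of $(\Delta_A,\Delta_A^+)$ (and analogously for $B$).

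For the implication ``shift equivalent $\Rightarrow$ isomorphic triples'', suppose $R,S$ are nonnegative with $A^l=RS$, $B^l=SR$, $AR=RB$, $SA=BS$. I would observe that $AR=RB$ makes $R$ a morphism of directed systems from the $B$-tower to the $A$-tower, hence induces $\overline R\colon\Delta_B\to\Delta_A$, and similarly $S$ induces $\overline S\colon\Delta_A\to\Delta_B$. Since $\overline R\,\overline S$ and $\overline S\,\overline R$ are induced by $RS=A^l$ and $SR=B^l$, they equal the automorphisms $\delta_A^{\,l}$ and $\delta_B^{\,l}$, so $\overline R$ is bijective with $\overline R^{-1}=\delta_B^{-l}\overline S$. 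Nonnegativity of $R$ gives $\overline R(\Delta_B^+)\subseteq\Delta_A^+$, and nonnegativity of $S$ together with $\delta_B$ being an order-automorphism gives $\overline R^{-1}(\Delta_A^+)\subseteq\Delta_B^+$; hence $\overline R$ matches the positive cones, and it intertwines $\delta_A,\delta_B$ because $AR=RB$ descends to the limit. So $\overline R$ is an isomorphism of triples.

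For the converse, given an isomorphism $\phi\colon(\Delta_A,\Delta_A^+,\delta_A)\to(\Delta_B,\Delta_B^+,\delta_B)$, I would first realize $\phi$ concretely: each $\phi(\iota^A_0(e_i))$ lies in $\Delta_B^+$, so has a nonnegative representative, and pushing all of them to a common level $k$ produces a nonnegative matrix $P$ with $\phi\,\iota^A_0=\iota^B_kP$. Applying $\phi\delta_A=\delta_B\phi$ to the standard basis gives $\iota^B_k(PA)=\iota^B_k(BP)$, hence $B^jPA=B^{j+1}P$ for some $j$; after replacing $P$ by $B^jP$ this becomes the exact identity $PA=BP$ (with $k$ enlarged by $j$). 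The same construction applied to $\phi^{-1}$ gives a nonnegative $Q$ with $QB=AQ$ and $\phi^{-1}\iota^B_0=\iota^A_lQ$. Then $\iota^A_0=\phi^{-1}\phi\,\iota^A_0=\iota^A_{l+k}(QP)$ by $\delta$-equivariance, while also $\iota^A_0=\iota^A_{l+k}(A^{l+k})$; comparing yields $A^pQP=A^{p+c}$ for some $p$, where $c:=k+l$, and symmetrically $B^pPQ=B^{p+c}$. Finally I would set $l':=p+c$, $R:=A^pQ$, $S:=P$, and check directly that these nonnegative matrices satisfy $AR=A^{p+1}Q=A^p(QB)=RB$, $SA=PA=BP=BS$, $RS=A^pQP=A^{l'}$, and $SR=PA^pQ=B^pPQ=B^{l'}$; hence $A$ and $B$ are shift equivalent.

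The forward direction is routine functoriality; the substance is in the converse. The main obstacle will be the passage from ``eventual'' to ``exact'': the abstract isomorphism $\phi$ is realized by a nonnegative matrix only up to a shift of level, so the relations one gets for free (that $PA$ and $BP$ agree in the colimit, etc.) hold only after applying the connecting maps, and one must spend a power of $B$ (resp.\ $A$) to turn them into genuine matrix identities. One then has to arrange the bookkeeping of levels and exponents so that $RS$ and $SR$ emerge as the \emph{same} power $l'$. None of this is deep, but it is where all the care is needed.
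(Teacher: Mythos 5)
Your argument is correct, and it is essentially the proof the paper points to: the paper does not prove this theorem itself but cites Krieger [Theorem~4.2] and the algebraic treatment in Lind--Marcus [\S 7.5], both of which run exactly this way --- nonnegative intertwiners induce order-preserving maps of the direct limits in one direction, and in the other direction the abstract isomorphism is realized at a finite stage by nonnegative matrices, after spending powers of $A$ and $B$ to upgrade identities that hold only in the colimit to exact matrix identities. I checked the level and exponent bookkeeping in your converse ($R=A^pQ$, $S=P$, $l'=p+k+l$) and it works.
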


 Wagoner noted that the induced structure on $\Delta_A$ by the automorphism $\delta_A$ makes $\Delta_A$ a $\mathbb Z[x,x^{-1}]$-module.  This fact  was systematically used in~\cite{wago1,wago2} (see also~\cite[\S3]{boyle}).

Let $E$ be a graph and $L(E)$ the associated Leavitt path algebra. Recall that the graded Grothendieck group $K_0^{\gr}(L(E))$  has a natural $\mathbb Z[x,x^{-1}]$-module structure.  
Theorem~\ref{mmpags} shows that the graded Grothendieck group of the Leavitt path algebra  coincides with the Krieger dimension group of the shift of finite type associated with $A_E^t$, the transpose of the adjacency matrix $A_E$: 
\[\big(K_0^{\gr}(L(E)),(K_0^{\gr}(L(E))^+\big ) \cong (\Delta_{A^t}, \Delta_{A^t}^+).\] 
This will provide a link between the theory of Leavitt path algebras and symbolic dynamics~\cite{arapar,hazd,hazbk}.

\begin{theorem}\label{mmpags}
Let $E$ be a finite graph with no sinks with adjacency matrix $A$. Then, there is a $\mathbb Z[x,x^{-1}]$-module isomorphism 
$\phi:K^{\gr}_0(L(E)) \longrightarrow \Delta_{A^t}$ such that $\phi(K^{\gr}_0(L(E))^+)=\Delta_{A^t}^+$. 
\end{theorem}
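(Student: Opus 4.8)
The plan is to exhibit explicit $\mathbb{Z}[x,x^{-1}]$-module maps between $K_0^{\gr}(L(E))$ and $\Delta_{A^t}$ that are mutually inverse, order-preserving, and that respect the $x$-action, and then check each of these properties directly. The starting point is the standard presentation of $K_0^{\gr}(L(E))$ for a finite graph $E$ with no sinks: by the results on graded $K$-theory of Leavitt path algebras (see \cite{hazd, arapar, hazbk}), $K_0^{\gr}(L(E))$ is the group completion of the talented monoid $T_E$, and after identifying $T_E$ with $M_{\overline E}$ via Theorem~\ref{hgfgfgggf}, one obtains a concrete description of $K_0^{\gr}(L(E))$ as a direct limit. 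Precisely, writing $n = |E^0|$, the defining relations $v(i) = \sum_{e\in s^{-1}(v)} r(e)(i+1)$ of $T_E$ say that, grading $T_E$ by the $\mathbb Z$-index, the homogeneous component in degree $i$ maps into degree $i+1$ via the matrix $A^t$ (the transpose enters because edges out of $v$ contribute to the \emph{range} vertices, i.e.\ columns of $A_E$ become rows). Hence $K_0^{\gr}(L(E))$ is naturally $\varinjlim\big(\mathbb Z^n \xrightarrow{A^t} \mathbb Z^n \xrightarrow{A^t}\cdots\big) = \Delta_{A^t}$, and the $x$-action on $K_0^{\gr}$ (shifting the grading, $x\cdot[P] = [P(1)]$) corresponds exactly to the shift-of-index automorphism $\delta_{A^t}$ on the direct limit.

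Concretely, I would proceed in four steps. First, recall/assemble the isomorphism $\mathcal V^{\gr}(L(E)) \cong T_E$ of $\mathbb Z$-monoids, sending the class of the graded projective module $v L(E)$ to the generator $v = v(0)$, and its shift $vL(E)(i)$ to $v(i)$; this is the content of the graded analogue of the Ara--Moreno--Pardo computation and is already in the literature. Second, group-complete: since $T_E \cong M_{\overline E}$ is cancellative (by Lemma~5.5 of \cite{arahazrat}, as noted in the excerpt), $T_E$ embeds in its group completion, and $K_0^{\gr}(L(E))$ is that group completion with its inherited $\mathbb Z[x,x^{-1}]$-action and positive cone (the image of $T_E$). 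Third, identify the group completion of $M_{\overline E}$ with $\Delta_{A^t}$: the covering graph $\overline E$ has vertex set $E^0\times \mathbb Z$, and the relations of $M_{\overline E}$ let each layer $E^0\times\{i\}$ flow into layer $E^0\times\{i+1\}$; group-completing layer by layer turns the monoid into the abelian-group direct limit of $\mathbb Z^n \xrightarrow{A^t}\mathbb Z^n\to\cdots$, which is by definition $\Delta_{A^t}$. Fourth, check compatibility of structures: the $\mathbb Z$-action ${}^1 v(i) = v(i+1)$ becomes multiplication by $x$, which on the direct limit is precisely the shift map $\delta_{A^t}$ (well-defined and invertible because every $\mathbb Z^n$ in the tower eventually maps to a later copy via $A^t$), so the map is $\mathbb Z[x,x^{-1}]$-linear; and the positive cone $K_0^{\gr}(L(E))^+$, being the image of $\mathcal V^{\gr}(L(E))\cong T_E = M_{\overline E}$, matches $\Delta_{A^t}^+ = \varinjlim \mathbb N^n$ summand by summand.

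The main obstacle is bookkeeping around the transpose and the direction of the limit maps: one must be careful that the relation $v(i) = \sum_{e\in s^{-1}(v)} r(e)(i+1)$ in $T_E$ corresponds to the linear map $\mathbb Z^n \to \mathbb Z^n$ whose $(w,v)$-entry is $|s^{-1}(v)\cap r^{-1}(w)| = A_E(v,w)$, i.e.\ to $A_E^t$ acting on column vectors indexed by $E^0$, not to $A_E$ itself; getting this backwards would give $\Delta_A$ instead of $\Delta_{A^t}$. A secondary subtlety is that $M_{\overline E}$ is not literally $\bigoplus_i \mathbb Z^n$ but a quotient identifying each layer with the flowed-forward image of the previous one, so the identification with the direct limit requires invoking the Confluence Lemma~\ref{confuu} (via Theorem~\ref{hgfgfgggf}) to see that two elements of $F_{\overline E}$ agree in $M_{\overline E}$ exactly when they have a common flow-descendant, which is precisely the equivalence defining the direct limit. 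Once these two points are handled carefully, the remaining verifications — additivity, $x$-equivariance, and preservation of the positive cone — are routine, since each is checked on generators $v(i)$ where both sides are given by the same explicit formulas.
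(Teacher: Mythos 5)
The paper does not actually prove Theorem~\ref{mmpags}: it is quoted from the literature (the citations \cite{arapar}, \cite{hazd}, \cite{hazbk} immediately preceding the statement), so there is no in-paper argument to compare yours against. That said, your reconstruction --- $\mathcal V^{\gr}(L(E))\cong T_E\cong M_{\overline E}$, realize $M_{\overline E}$ as an inductive limit of copies of $\mathbb N^{|E^0|}$ with connecting map given by the adjacency data (justified by the Confluence Lemma~\ref{confuu} together with Theorem~\ref{hgfgfgggf} and the absence of sinks), then group-complete using cancellativity --- is exactly the standard proof, and it is the description the paper itself later invokes verbatim in the proof of Theorem~\ref{alergy1}, where $T_E=M_{\overline E}$ is written as the inductive limit of $\mathbb N^{p+q}$ along the adjacency matrix. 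The outline is correct. The one point you should nail down, beyond the row-versus-column bookkeeping you already flag, is the direction of the shift: on the limit one has $[x,i]=[A^tx,i+1]$, so the grading shift $v(i)\mapsto v(i+1)$ is the \emph{inverse} of the endomorphism ``apply $A^t$ within a layer''; depending on whether $\delta_{A^t}$ is defined as the former or the latter, your map intertwines $x$ with $\delta_{A^t}$ or with $\delta_{A^t}^{-1}$, and only one choice makes it literally $\mathbb Z[x,x^{-1}]$-linear rather than linear up to the automorphism $x\mapsto x^{-1}$. Since the paper leaves its convention for $\delta_A$ unspecified (and even writes the connecting map as $A_E$ rather than $A_E^t$ in the proof of Theorem~\ref{alergy1}), this is a convention to fix rather than a gap, and it does not affect the existence of the order-preserving isomorphism.
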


It is easy to see that two matrices $A$ and $B$ are shift equivalent if and only if $A^t$ and $B^t$ are shift equivalent. Combining this with Theorem~\ref{mmpags} and the fact that Krieger's dimension group is a complete invariant for shift equivalence, we have the following corollary, which will be used in our main Theorem~\ref{alergy1}.

\begin{corollary}\label{h99}
Let $E$ and $F$ be finite graphs with no sinks and $A_E$ and $A_F$ be their adjacency matrices, respectively. Then
$A_E$ is shift equivalent to $A_F$ if and only if $K_0^{\gr}(L(E)) \cong K_0^{\gr}(L(F))$ via an order-preserving $\mathbb Z[x,x^{-1}]$-module isomorphism.
\end{corollary}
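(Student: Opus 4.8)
The plan is to chain together the three facts just recalled, since Corollary~\ref{h99} is really a bookkeeping consequence of Krieger's theorem and Theorem~\ref{mmpags}.

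\textbf{Step 1.} First I would record the elementary observation that $A_E$ is shift equivalent to $A_F$ if and only if $A_E^t$ is shift equivalent to $A_F^t$. Indeed, if $R,S$ are nonnegative matrices realising a shift equivalence of lag $\ell$, so that $A_E^\ell=RS$, $A_F^\ell=SR$, $A_ER=RA_F$ and $SA_E=A_FS$, then transposing every identity and putting $R'=S^t$, $S'=R^t$ gives $(A_E^t)^\ell=R'S'$, $(A_F^t)^\ell=S'R'$, $A_E^tR'=R'A_F^t$ and $S'A_E^t=A_F^tS'$, so $A_E^t$ and $A_F^t$ are shift equivalent of the same lag; the converse is identical.

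\textbf{Step 2.} Next I would invoke Krieger's Theorem~\ref{kriegeror} for the pair $A_E^t, A_F^t$: these matrices are shift equivalent if and only if their Krieger triples are isomorphic, i.e.\ there is a group isomorphism $\Delta_{A_E^t}\to\Delta_{A_F^t}$ carrying $\Delta_{A_E^t}^+$ onto $\Delta_{A_F^t}^+$ and intertwining the automorphisms $\delta_{A_E^t}$ and $\delta_{A_F^t}$. The key point is Wagoner's observation, recalled above, that the $\mathbb Z[x,x^{-1}]$-module structure on $\Delta_A$ is precisely the one in which $x$ acts as $\delta_A$. Hence an additive map between these modules is $\mathbb Z[x,x^{-1}]$-linear exactly when it commutes with the $\delta$'s, and it is order-preserving exactly when it respects the positive cones; so ``isomorphism of Krieger triples'' and ``order-preserving $\mathbb Z[x,x^{-1}]$-module isomorphism $\Delta_{A_E^t}\cong\Delta_{A_F^t}$'' are literally the same notion.

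\textbf{Step 3.} Finally, Theorem~\ref{mmpags} supplies order-preserving $\mathbb Z[x,x^{-1}]$-module isomorphisms $K_0^{\gr}(L(E))\cong\Delta_{A_E^t}$ and $K_0^{\gr}(L(F))\cong\Delta_{A_F^t}$. Composing these with the isomorphism of Step~2 (in either direction) shows that $K_0^{\gr}(L(E))$ and $K_0^{\gr}(L(F))$ are order-preserving $\mathbb Z[x,x^{-1}]$-module isomorphic if and only if the Krieger triples of $A_E^t$ and $A_F^t$ are isomorphic, which by Step~2 happens if and only if $A_E^t$ is shift equivalent to $A_F^t$, which by Step~1 happens if and only if $A_E$ is shift equivalent to $A_F$. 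Since all the ingredients are already in place, I do not expect a genuine obstacle here; the only point that deserves care is the identification in Step~2 --- checking that passing between Theorem~\ref{kriegeror} and Theorem~\ref{mmpags} neither loses nor adds structure, so that the two ``if and only if'' statements really do compose. Note also that we deliberately do \emph{not} ask the $K_0^{\gr}$-isomorphism to be pointed, matching the fact that shift equivalence, unlike conjugacy, forgets the order-unit.
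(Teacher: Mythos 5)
Your proposal is correct and follows the same route the paper takes: the paper derives Corollary~\ref{h99} exactly by combining the observation that shift equivalence is preserved under transposition, Krieger's Theorem~\ref{kriegeror}, and the identification of $K_0^{\gr}(L(E))$ with $(\Delta_{A^t},\Delta_{A^t}^+)$ from Theorem~\ref{mmpags}. Your write-up merely makes explicit the routine verifications (the transposed shift-equivalence data and the matching of ``isomorphism of Krieger triples'' with ``order-preserving $\mathbb Z[x,x^{-1}]$-module isomorphism'') that the paper leaves to the reader.
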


\section{Archimedean classes}\label{felhmm}

The goal of this section is Theorem \ref{ghostagain}, which describes the structure of the Archimedean classes of the talented monoid of a meteor graph.  This structure is a key ingredient in our proof, in Theorem \ref{thm:main}, that for meteor graphs, shift equivalence implies strong shift equivalence.

We define meteor graphs in Definition \ref{def:meteor}, in Section \ref{sec:arch-for-meteor}; Section \ref{sec:arch-general} studies Archimedean classes in more generality.

\subsection{Archimedean classes of a talented monoid}
\label{sec:arch-general}
Let \(M\) be a $\Gamma$-monoid, where $\Gamma$ is an abelian group. The 
$\Gamma$-order ideal generated by an element \(x\in M\) is the smallest 
$\Gamma$-order ideal \(\langle x\rangle\) which contains \(x\). Explicitly, 
\begin{equation}
    \langle x\rangle = \big \{ y \in M \mid y\leq \sum_i {}^{\gamma_i}x, \gamma_i \in \Gamma \big \}.
    \label{eq:order-ideal}
\end{equation}

The $\Gamma$-\emph{Archimedean equivalence relation} 
$\asymp$ on $M$ is defined as $x\asymp y$ if 
$\langle x\rangle = \langle y\rangle.$
 More explicitly, \(x\asymp y\) if and only if there exist \(\gamma_1,\ldots,\gamma_k\) and \(\alpha_1,\ldots,\alpha_{k'}\) in $\Gamma$ such that
\[x\leq\sum_{i=1}^k{}^{\gamma_i}y\quad \text{and}\quad y \leq \sum_{i=1}^{k'}{}^{\alpha_i}x.\]

%\[a\asymp b\iff \exists n,m\in\mathbb{N}(a\leq nb\text{ and }b\leq ma).\]
An \(\asymp\)-equivalence class is called an \emph{Archimedean class} of \(M\). We denote the $\asymp$-equivalence class of $x$ by $[x]$. It is easy to see that $[x]$ is a $\Gamma$-semigroup. 

Note that if $\Gamma$ is trivial, we arrive at the standard definition of Archimedean classes of a 
monoid~\cite[\S13]{RGS}. 

\begin{example}
Let $M=\mathbb N \oplus \mathbb N \oplus \mathbb N$ be a monoid with usual component-wise addition. Observe that the Archimedean components of $M$ consist of $8$  semigroups corresponding to the origin, each axis, and each plane of the three-dimensional space. 

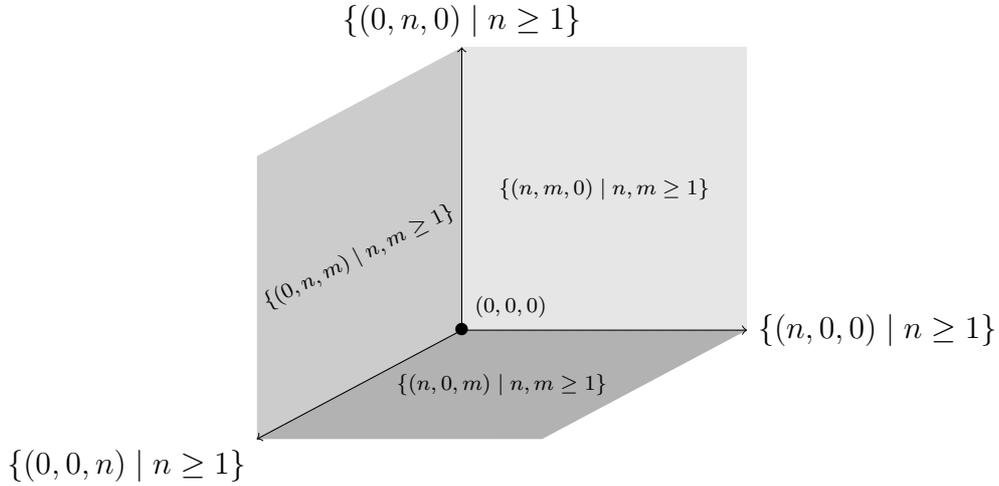
\begin{figure}[!ht]
    \centering
    \begin{tikzpicture}[scale=2.5]
  % Define the nodes for the coordinate axes
  \coordinate (O) at (0,0,0);
  \coordinate (X) at (1.5,0,0);
  \coordinate (Y) at (0,1.5,0);
  \coordinate (Z) at (-0.5,0,1.5);

  % Draw dashed lines from the origin to each node
  \foreach \c in {X,Y,Z}{
    \draw[dashed] (O) -- (\c);
  }
  
  % Draw the planes generated by each pair of axes
  \fill[black!10] (O) -- (X) -- (X|-Y) -- (Y) -- cycle;
  \fill[black!20] (O) -- (Y) -- (-0.5,1.5,1.5) -- (Z) -- cycle;
  \fill[black!30] (O) -- (X) -- (1,0,1.5) -- (Z) -- cycle;

% Draw the coordinate axes
  \node [label={[label distance=-0.3cm]45:\tiny$(0,0,0)$}] at (O) {$\bullet$};
  \draw[->] (O) -- (X) node[right] {$\left\{(n,0,0)\mid n\geq 1\right\}$};
  \draw[->] (O) -- (Y) node[above] {$\left\{(0,n,0)\mid n\geq 1\right\}$};
  \draw[->] (O) -- (Z) node[below left] {$\left\{(0,0,n)\mid n\geq 1\right\}$};
  \node at (0.5,0,0.75) {\tiny$\left\{(n,0,m)\mid n,m\geq 1\right\}$};
  \node at (0.75,0.75,0) {\tiny$\left\{(n,m,0)\mid n,m\geq 1\right\}$};
  \node[rotate=26.565] at (-0.06,0.87,1.25) {\tiny$\left\{(0,n,m)\mid n,m\geq 1\right\}$};
\end{tikzpicture}
    \caption{Archimedean classes of  the monoid $\mathbb{N}\oplus\mathbb{N}\oplus\mathbb{N}$. The remaining class, $\left\{(m,n,p)\mid m,n,p\geq 1\right\}$, is not depicted.}
    \label{fig:archimedean_n3}
\end{figure}
%\begin{figure}[htp]
    %\centering
    %\includegraphics[scale=0.4, angle=-90]{IMG_6640}
%\end{figure}

However, considering $M$ as a \emph{$\mathbb Z$-cyclic monoid of rank \(3\)} (see Definition~\ref{cycmond}), the Archimedean components are only $0$ and $M\backslash 0$. 
    
\end{example}

Let \(E\) be a finite graph. Our immediate goal is to describe the $\mathbb Z$-Archimedean classes of the monoids associated with \(E\) in geometric terms. For this, we start with a construction on subsets of \(E\) which corresponds to the relation \(\to_1\) defined on $F_E$ in~(\ref{arrow11}).

Given a subset \(A\subseteq E^0\), we let
\[A^{1\rightarrow}=r(s^{-1}(A))\cup\left\{a\in A\mid a\text{ is a sink}\right\}.\]

Throughout the paper, we also denote $A^{1\rightarrow}$ by  $A^{\rightarrow}$. For $n \geq 2$, we define \(A^{n\rightarrow}=(A^{(n-1)\rightarrow}){^{1\rightarrow}}\). %, and define \(A^{n\rightarrow}\)  for $n \in \N$. 
Finally, define
\[R(A)=\bigcap_{n\geq 1}\left(\bigcup_{m\geq n}A^{m\rightarrow}\right).\]
In other words, \(R(A)\) consists of all sinks which can be {flowed} into by some point of \(A\), and all points which can be flowed into from points of \(A\) by paths of arbitrarily large length.

\begin{definition}
    We call the set \(R(A)\) as above the set of \emph{leaves} of \(A \subseteq E^0\). We denote by  \(\mathscr{L}(E)\) the collection of sets of leaves of $E$, that is,  
    \[
    \mathscr{L}(E):=\{R(A)\mid A\subseteq E^0\}.\]
\end{definition}

\begin{proposition}\label{helpprop}
    If $v \in A$, and there is a path $\gamma$ in $E$ from $v$ to a vertex $w$ which lies on a cycle $C$, then every vertex on $C$ lies in $R(A)$.
\end{proposition}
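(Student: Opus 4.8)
The plan is to show directly that every vertex of the cycle $C$ lies in $R(A)$ by exhibiting, for each such vertex $u$ and each prescribed length $n$, a path in $E$ from some vertex of $A$ to $u$ of length at least $n$; together with the fact that $v\in A$, this places $u$ in $\bigcup_{m\geq n}A^{m\rightarrow}$ for every $n$, hence in the intersection $R(A)$. First I would fix a vertex $w$ on $C$ together with the given path $\gamma$ from $v$ to $w$, and fix an arbitrary vertex $u$ on $C$. Since $C$ is a cycle through $w$, there is a path $\beta$ along $C$ from $w$ to $u$ (of length strictly less than $|C|$, possibly $0$ if $u=w$), and there is the full cycle loop $c$ at $w$ of length $|C|\geq 1$.

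The key construction is then the concatenation $\gamma\, c^k\, \beta$, which is a path in $E$ from $v$ to $u$ of length $|\gamma| + k|C| + |\beta|$. Because $|C|\geq 1$, by choosing $k$ large this length can be made to exceed any given $n$. I would then translate "there is a path of length $m$ from $v$ to $u$" into the statement "$u\in A^{m\rightarrow}$": since $v\in A$ and $A^{m\rightarrow}$ is obtained by applying $(\,\cdot\,)^{1\rightarrow}=r(s^{-1}(\cdot))\cup\{\text{sinks}\}$ exactly $m$ times, an easy induction on $m$ shows that $A^{m\rightarrow}$ contains the range of every path of length $m$ whose source lies in $A$. (One must be slightly careful that the "sinks" clause in the definition of $(\cdot)^{1\rightarrow}$ does not interfere; since $u$ lies on a cycle it is not a sink, and more importantly no vertex on the path $\gamma c^k\beta$ strictly before $u$ is a sink either, so the range-of-edge clause carries the induction through cleanly.) Hence $u\in A^{m\rightarrow}$ for $m = |\gamma|+k|C|+|\beta|$, and since $k$ is arbitrary, $u\in\bigcup_{m\geq n}A^{m\rightarrow}$ for every $n\geq 1$, i.e. $u\in R(A)$.

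The main obstacle — really the only subtle point — is the bookkeeping in the inductive claim "the source in $A$, path of length $m$ $\Rightarrow$ range in $A^{m\rightarrow}$," because of the sink clause in the definition of $A^{1\rightarrow}$: one needs to make sure the argument does not accidentally require intermediate vertices to be non-sinks in a way that fails, and conversely that passing through the sink clause is never needed here. Since we are following an actual path in $E$, none of the intermediate vertices $s(\alpha_{i+1})=r(\alpha_i)$ is a sink, so the clause $r(s^{-1}(\cdot))$ alone suffices at each step, and the induction goes through without incident. Everything else is the elementary observation that a cycle lets us pad path lengths arbitrarily, which is exactly what $R(A)$ is designed to detect.
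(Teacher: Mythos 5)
Your proposal is correct and follows essentially the same route as the paper: both arguments pad the path $\gamma$ with $k$ full traversals of the cycle plus the arc from $w$ to the target vertex, yielding membership in $A^{m\rightarrow}$ for arbitrarily large $m$ and hence in $R(A)$. Your extra care with the inductive claim that a path of length $m$ from a vertex of $A$ lands its range in $A^{m\rightarrow}$ (and that the sink clause never interferes) is a detail the paper leaves implicit, but it is sound.
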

\begin{proof}
    Notice that for any $m \geq 0$, $w \in A^{(m\cdot |C|+ |\gamma| )\to} $.  Therefore $w \in R(A).$  Similarly, if $z$ is another vertex on the cycle $C$, and the path from $w$ to $z$ along the cycle has length $j$, then $z \in A^{(m\cdot | C |+ j + | \gamma|)\to}$ for all $m \geq 0$. 
\end{proof}
Our next goal is to describe when $B = R(B)$.  The answer to this question depends on the concept of a {\em hereditary} set of vertices.
\begin{definition}
\label{def:hereditary}
    For a directed graph $E$, a subset $H \subseteq E^0$ is {\em hereditary} if $r(s^{-1}(H)) \subseteq H.$    \end{definition}

 Observe that intersections of hereditary subsets are also hereditary. Hence, to each subset \(B\) of \(E^0\) corresponds its \emph{hereditary closure}, which is the smallest hereditary subset of \(E^0\) containing \(B\). 
\begin{proposition}\label{prop.finalsets}
    For a finite graph \(E\) and \(B\subseteq E^0\), the following are equivalent.
    \begin{enumerate}[label=(\arabic*)]
        \item\label{prop.finalsets.image} \(B=R(A)\) for some \(A\subseteq E^0\),
        \item\label{prop.finalsets.arrow} \(B=B^{\rightarrow}\),
        \item\label{prop.finalsets.union} \(B\) is the hereditary closure of a union of sinks and cycles, 
        \item\label{prop.finalsets.fixedpoint} \(B=R(B)\).
    \end{enumerate}
\end{proposition}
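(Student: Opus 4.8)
The plan is to establish the cycle \ref{prop.finalsets.image} $\Rightarrow$ \ref{prop.finalsets.arrow} $\Rightarrow$ \ref{prop.finalsets.union} $\Rightarrow$ \ref{prop.finalsets.fixedpoint} $\Rightarrow$ \ref{prop.finalsets.image}. Two elementary observations will be used repeatedly: first, if $X\subseteq E^0$, $u\in X$, and there is a path of length $k\geq 1$ from $u$ to $w$, then $w\in X^{k\rightarrow}$, while a sink $u\in X$ satisfies $u\in X^{k\rightarrow}$ for every $k\geq 0$ (both by induction on $k$ from the definition of $X^{1\rightarrow}$); second, because $X^{1\rightarrow}=r(s^{-1}(X))\cup\{a\in X\mid a\text{ sink}\}$, a subset $X$ is hereditary precisely when $X^{1\rightarrow}\subseteq X$, and in that case $X^{m\rightarrow}\subseteq X$ for all $m\geq 1$, so $R(X)\subseteq X$. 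The implication \ref{prop.finalsets.fixedpoint} $\Rightarrow$ \ref{prop.finalsets.image} is immediate by taking $A=B$.

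For \ref{prop.finalsets.image} $\Rightarrow$ \ref{prop.finalsets.arrow}, I would use that $v\in R(A)$ exactly when $v\in A^{m\rightarrow}$ for infinitely many $m$. The inclusion $R(A)^{1\rightarrow}\subseteq R(A)$ follows because any edge $e$ with $s(e)\in R(A)$ propagates $s(e)\in A^{m\rightarrow}$ to $r(e)\in A^{(m+1)\rightarrow}$ for those infinitely many $m$, and a sink in $R(A)$ is already in $R(A)$. Conversely, a non-sink $v\in R(A)$ lies in $(A^{(m-1)\rightarrow})^{1\rightarrow}$ for infinitely many $m\geq 2$, hence is the range of an edge with source in $A^{(m-1)\rightarrow}$ for each such $m$; since $E$ is finite, $r^{-1}(v)$ is finite, so by the pigeonhole principle one fixed edge $e$ serves infinitely many of these $m$, giving $s(e)\in R(A)$ and $v=r(e)\in R(A)^{1\rightarrow}$. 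Note that finiteness of $E$ enters precisely here.

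For \ref{prop.finalsets.arrow} $\Rightarrow$ \ref{prop.finalsets.union}, let $S$ be the union of the sinks contained in $B$ together with the vertex sets of all cycles contained in $B$; this is a union of sinks and cycles, and since $B=B^{1\rightarrow}$ is hereditary, the hereditary closure of $S$ lies inside $B$. For the reverse containment, given $v\in B$ that is not a sink, $B=B^{1\rightarrow}$ produces an edge into $v$ with source in $B$; that source is again a non-sink, so iterating yields an infinite backward path inside $B$. Finiteness of $B$ forces a repeated vertex, which isolates a simple cycle $C$ with $C^0\subseteq B$, and the remaining backward segment from $C$ down to $v$ shows that $v$ lies in the hereditary closure of $C\subseteq S$. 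Sinks of $B$ lie in $S$ directly. This ``backward-path-into-a-cycle'' argument is the technical heart of the proof and the step I expect to require the most care, mainly in extracting a genuine simple cycle from the closed walk while keeping all of its vertices in $B$.

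Finally, for \ref{prop.finalsets.union} $\Rightarrow$ \ref{prop.finalsets.fixedpoint}, hereditarity of $B$ already gives $R(B)\subseteq B$. For $B\subseteq R(B)$, take $v\in B$; by definition of the hereditary closure there is a path from some $p\in S$ to $v$. If $p$ is a sink, this path is trivial, so $v=p$ is a sink of $B$ and lies in every $B^{m\rightarrow}$. If $p$ lies on a cycle $C\subseteq B$, prepending $m$ loops around $C$ to the path yields, for every $m$, a path from $p\in B$ to $v$ whose length grows without bound, so $v\in B^{k\rightarrow}$ for arbitrarily large $k$, i.e.\ $v\in R(B)$; this is essentially the computation already carried out in Proposition \ref{helpprop}. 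Running through the cycle of implications then yields the equivalence of \ref{prop.finalsets.image}--\ref{prop.finalsets.fixedpoint}.
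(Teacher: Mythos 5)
Your proposal is correct and follows essentially the same route as the paper: the same cycle of implications \ref{prop.finalsets.image} $\Rightarrow$ \ref{prop.finalsets.arrow} $\Rightarrow$ \ref{prop.finalsets.union} $\Rightarrow$ \ref{prop.finalsets.fixedpoint} $\Rightarrow$ \ref{prop.finalsets.image}, the same pigeonhole-on-finiteness argument for \ref{prop.finalsets.image} $\Rightarrow$ \ref{prop.finalsets.arrow} (you pigeonhole on the incoming edge where the paper pigeonholes on the predecessor vertex), and the same backward-path-into-a-cycle argument for \ref{prop.finalsets.arrow} $\Rightarrow$ \ref{prop.finalsets.union}. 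The only cosmetic difference is in \ref{prop.finalsets.union} $\Rightarrow$ \ref{prop.finalsets.fixedpoint}, where the paper first observes $B=B^{\rightarrow}$ while you verify $B\subseteq R(B)$ directly by prepending loops around a cycle, as in Proposition \ref{helpprop}.
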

\begin{proof}
    Note first that the map \(X\mapsto X^\rightarrow\) preserves set inclusion.

    \begin{enumerate}
        
   \item[\ref{prop.finalsets.image}\(\Rightarrow\)\ref{prop.finalsets.arrow}] Suppose that \(B=R(A)\). If \(x\in B^\rightarrow\), then one of the following hold:
    \begin{itemize}
        \item \(x\) is a sink of \(B\) (so in particular \(x\in B\)).
        \item \(x\) can be flowed into by some point \(b\) of \(B\). % by paths of arbitrarily large length. E doesn't see why this is true!! 
        But then this \(b\in B=R(A)\) is not a sink, so it can also be flowed into by paths of arbitrarily large size starting at \(A\). Concatenating these paths, we see that \(x\in R(A)=B\).
    \end{itemize}
    
    Conversely, if \(x\in B=R(A)\), then one of the following hold:
    \begin{itemize}
        \item \(x\) is a sink, so \(x\in B^\rightarrow\).
        \item \(x\) is not a sink. Then there are infinitely many paths \(\alpha_1,\alpha_2,\ldots\), all starting in \(A\) and ending at \(x\), with strictly increasing lengths. Let \(x_n\) be the vertex immediately preceding \(x\) in \(\alpha_n\). Since \(E^0\) is finite, some point \(y\) of \(E^0\) will coincide with \(x_n\) for infinitely many \(n\). So, passing to a subsequence if necessary, we can assume that all \(x_n\) are equal to \(y\). Since the paths obtained by dropping the last edge of each \(\alpha_n\) connect some point of \(A\) to \(y\), and these have arbitrarily large lengths, we conclude that \(y\in B = R(A).\) By construction, we have \(x\in r(s^{-1}(y))\subseteq r(s^{-1}(B))\subseteq B^\rightarrow\).
    \end{itemize}
    
   \item[\ref{prop.finalsets.arrow}\(\Rightarrow\)\ref{prop.finalsets.union}] Let \(L\) be the set of all vertices which lie on a cycle contained in \(B\). %{\color{red} We have not defined what a loop is} {\color{blue}A loop should be a cycle?}. 
    Then \(L\subseteq B\), so \(L^\rightarrow\subseteq B^\rightarrow=B\). Iterating, we obtain \(L^{m\rightarrow}\subseteq B\) for all \(m\geq 1\). The hereditary closure \(\overline{L}\) of \(L\) is \(\bigcup_{m\geq 1}L^{m\rightarrow}\), so it is contained in \(B\).
    
    It remains to prove that any point \(x\) in \(B\) which is not a sink belongs to \(\overline{L}\). If \(x\in B^\rightarrow\) is not a sink, then there is an edge \(e_1\colon x_1\to x_0=x\) with \(x_1\in B\). But then \(x_1\) is also not a sink, so there is an edge \(e_2\colon x_2\to x_1\) with \(x_2\in B\). Repeating this argument, we find a sequence of edges
    \[\cdots\to x_3\to x_2\to x_1\to x_0=x\]
    with \(x_i\in B\). Since \(E^0\) is finite, we are able to find \(j>i>1\) such that \(x_j=x_i\), and thus we obtain a cycle
    \[x_j\to x_{j-1}\to\cdots x_{i+1}\to x_i=x_j\]
    completely contained in \(L\), so \(x_i\in L\) and \(x\) is, by construction, in the hereditary subset generated by \(x_i\), which is contained in \(\overline{L}\).
    
\item[\ref{prop.finalsets.union}\(\Rightarrow\)\ref{prop.finalsets.fixedpoint}] Under the conditions of \ref{prop.finalsets.union}, it is clear that \(B=B^\rightarrow\), so it follows that \(B=R(B)\).
    
    \item[\ref{prop.finalsets.fixedpoint}\(\Rightarrow\)\ref{prop.finalsets.image}] is trivial.\qedhere
    \end{enumerate}
    
\end{proof}

Next we show that the leaves of a union is the union of the leaves.

\begin{proposition}
    For a finite graph $E$ and $A, B \subseteq E^0$, we have $R(A \cup B) = R(A) \cup R(B).$
\end{proposition}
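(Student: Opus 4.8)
The plan is to prove both inclusions directly from the definition of $R$, exploiting the fact that the operation $X \mapsto X^{\rightarrow}$ commutes with unions.

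\textbf{The easy inclusion.} First I would observe that $(A \cup B)^{\rightarrow} = A^{\rightarrow} \cup B^{\rightarrow}$: indeed $r(s^{-1}(A \cup B)) = r(s^{-1}(A)) \cup r(s^{-1}(B))$, and a vertex is a sink in $A \cup B$ if and only if it is a sink in $A$ or in $B$. Iterating, $(A \cup B)^{n\rightarrow} = A^{n\rightarrow} \cup B^{n\rightarrow}$ for all $n \geq 1$. From $A^{n\rightarrow} \subseteq (A\cup B)^{n\rightarrow}$ and the fact that $X \mapsto X^{\rightarrow}$ preserves inclusion (noted at the start of the proof of Proposition~\ref{prop.finalsets}), we get $R(A) \subseteq R(A \cup B)$ by applying $R$ monotonically, and similarly $R(B) \subseteq R(A \cup B)$; hence $R(A) \cup R(B) \subseteq R(A \cup B)$. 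Monotonicity of $R$ itself follows immediately from monotonicity of each $X \mapsto X^{n\rightarrow}$ and the definition $R(X) = \bigcap_{n \geq 1}\bigcup_{m \geq n} X^{m\rightarrow}$.

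\textbf{The reverse inclusion.} Now suppose $x \in R(A \cup B)$. By definition, for every $n \geq 1$ there is some $m \geq n$ with $x \in (A \cup B)^{m\rightarrow} = A^{m\rightarrow} \cup B^{m\rightarrow}$. So the set $S = \{ m \geq 1 \mid x \in A^{m\rightarrow}\} \cup \{m \geq 1 \mid x \in B^{m\rightarrow}\}$ is infinite (it contains an element $\geq n$ for every $n$). Therefore at least one of the two sets $\{m \mid x \in A^{m\rightarrow}\}$, $\{m \mid x \in B^{m\rightarrow}\}$ is infinite; say the former. Then for every $n$ there is $m \geq n$ with $x \in A^{m\rightarrow}$, i.e. $x \in \bigcup_{m \geq n} A^{m\rightarrow}$ for all $n$, which is exactly $x \in R(A)$. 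Symmetrically, if the latter set is infinite, $x \in R(B)$. Either way $x \in R(A) \cup R(B)$, completing the proof.

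\textbf{Main obstacle.} This is essentially a bookkeeping argument, and the only subtle point—the one worth stating carefully—is the pigeonhole step: $x$ being flowed into from $A \cup B$ by arbitrarily long paths does not a priori tell us which of $A$ or $B$ is the source, but since the union of the two index sets is infinite, one of them must already be infinite, and an infinite set of path-lengths is precisely what membership in $R$ requires. No finiteness of $E$ is actually needed beyond what is implicit; the argument is purely set-theoretic once the identity $(A\cup B)^{\rightarrow} = A^{\rightarrow} \cup B^{\rightarrow}$ is in hand.
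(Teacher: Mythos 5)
Your proof is correct and follows essentially the same route as the paper: both rest on the identity $(A\cup B)^{m\rightarrow}=A^{m\rightarrow}\cup B^{m\rightarrow}$. The paper passes from this identity directly to the conclusion, whereas you make explicit the pigeonhole step needed to turn an intersection of unions into a union of intersections --- a worthwhile clarification, since that step (or, alternatively, the stabilization of the decreasing sequences $\bigcup_{m\geq n}X^{m\rightarrow}$ coming from finiteness of $E^0$) is exactly what the paper leaves implicit.
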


\begin{proof}
    Clearly, $R(A) \cup R(B) \subseteq R(A \cup B)$; the converse follows from the fact that, by definition,  $(A \cup B)^{ \to} = A^\to \cup B^\to. $ Iterating, it follows that $(A \cup B)^{m \to} = A^{m \to} \cup B^{m \to}$, and hence $R(A \cup B) = R(A) \cup R(B).$ 
    %.  So, fix $x \in R(A \cup B) \backslash R(A).$  If $x$ is a sink, $x\not\in R(A)$ implies that $x \not\in A^{m\to}$ for any $m\in \N.$  However, $x \in R(A \cup B)$
    %By the previous Theorem, $R(A \cup B) = R(A \cup B)^\to$ is the hereditary closure of a union of sinks and cycles.  In particular, any vertex downstream of $x$ also lies in $R(A \cup B) $.
\end{proof}

The next theorem states that sets of leaves of a graph \(E\) correspond to Archimedean classes of the respective talented monoid. The following analog of the support of a function will be useful: A representation \(x=\sum_{v\in A,n\in\mathbb{Z}} k_{n,v}({}^nv)\) of an element \(x\in T_E\) is said to be \emph{supported on \(A \subseteq E^0\)} if for each \(v\in A\) there exists \(n\in\mathbb{Z}\) such that \(k_{n,v}\geq 1\). Note that any (nonzero) element of \(T_E\) admits a representation supported on some (nonempty) subset of \(E^0\).

\begin{theorem}\label{leafandarc}
    Let \(E\) be a finite graph and $T_E$ its associated talented monoid. 
    \begin{enumerate}[label=(\arabic*)]
        \item Let $x \in T_E$.  If %If \(x\in T_E\) satisfies 
        \[x=\sum_{v\in A, n \in \Z }k_{n,v}({}^{n}v )=\sum_{w\in B, m\in \Z}l_{m,w}{}(^{m}w),\]
        are representations of \(x\) supported on \(A\) and \(B\), respectively, %where $k_{n,v}, l_{m,w} \geq 0$ always, and for each $v$ there exists $n$ such that $k_{n,v}\geq 1$ (and similarly for $l_{m,w}$), %\(k_{n,v},l_{m,w}\geq 1\) for all \(v,w\),
        then \(R(A)=R(B)\). We denote this common set of leaves by \(R(x)\).
        \item Any \(x\in T_E\) may be written as a sum of shifts of vertices of \(R(x)\), i.e., as
        \[x=\sum_{v\in R(x), n\in \Z }k_{n,v} {}^{n} v,\]
        with \(k_{n,v}\in \mathbb N\). %{Moreover, for every sink $v \in R(x)$, there exists $n$ so that $k_{n,v} > 0$, and for each cycle of $R(x)$, there exists a vertex $v$ of that cycle and $n\in \Z$ so that $k_{n,v} > 0$.}
        \item For \(x,y\in T_E\), we have \(x\in\langle y\rangle\) if, and only if, \(R(x)\subseteq R(y)\).
        \item There is a bijection between \(\mathscr{L}(E)\) and the collection of Archimedean classes of \(T_E\): The set of leaves \(A\in\mathscr{L}(E)\) corresponds to the Archimedean class \(%\phi(A)=
        \left[\sum_{a\in A}a\right]\), and the Archimedean class $[x]$ of \(x\in T_E\) corresponds to the set of leaves \(R(x)\).
    \end{enumerate}
\end{theorem}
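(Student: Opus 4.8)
The plan is to carry out the whole argument inside the graph monoid $M_{\overline{E}}$ of the covering graph, using the isomorphism $T_E\cong M_{\overline{E}}$ of Theorem~\ref{hgfgfgggf}. Under this identification a representation $x=\sum_{v\in A,\,n\in\Z}k_{n,v}({}^nv)$ of $x\in T_E$ supported on $A$ becomes an element $\bar{x}=\sum_{v,n}k_{n,v}(v,n)$ of the free monoid $F_{\overline{E}}$ whose support lies in $\overline{E}^0=E^0\times\Z$ and projects onto $A$ along the first-coordinate map $\pi$. The backbone of everything is a single claim, which I would establish first: \emph{if $\bar{x}\to c$ in $F_{\overline{E}}$, then $R\bigl(\pi(\supp(\bar{x}))\bigr)=R\bigl(\pi(\supp(c))\bigr)$.} Since $\to$ on a free monoid is generated by flowing one vertex occurrence at a time, it suffices to treat a single application of $\to_1$; so write $\bar{x}=(v,n)+\bar{x}'$ with $v$ not a sink and $c=\sum_{e\in s^{-1}(v)}(r(e),n+1)+\bar{x}'$, and set $A'=\pi(\supp(\bar{x}'))$. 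Then $\pi(\supp(\bar{x}))=\{v\}\cup A'$ and $\pi(\supp(c))=r(s^{-1}(v))\cup A'=\{v\}^{\rightarrow}\cup A'$, and since sets of leaves distribute over unions and $R(B^{\rightarrow})=R(B)$ for every $B$ --- the latter being immediate from the definition $R(B)=\bigcap_{n\ge 1}\bigcup_{m\ge n}B^{m\rightarrow}$ by re-indexing --- both sides equal $R(\{v\})\cup R(A')$. Part~(1) is then immediate: for $x\neq 0$, two representations supported on $A$ and $B$ give $\bar{x}_A,\bar{x}_B\in F_{\overline{E}}\setminus\{0\}$ that are equal in $M_{\overline{E}}$, so the Confluence Lemma~\ref{confuu} produces $c$ with $\bar{x}_A\to c$ and $\bar{x}_B\to c$, whence $R(A)=R(\pi(\supp(c)))=R(B)$; the case $x=0$ is trivial. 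Write $R(x)$ for this common value.

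For Part~(2) I would flow greedily. Starting from a representation of $x$ supported on $A$ (so $R(A)=R(x)$ by Part~(1)), flow \emph{every} non-sink occurrence, and iterate this $m$ times; by transitivity this is a single relation $\bar{x}_A\to c_m$, and one checks that the projected support of $c_m$ is exactly $A^{m\rightarrow}$, that is, the set of endpoints of paths of length exactly $m$ from $A$ together with the sinks reached from $A$ by shorter paths. The sinks belong to $R(A)$ directly from the definition of $R$. For an endpoint $u$ of a length-$m$ path from $a\in A$ with $m\ge|E^0|$, the path repeats a vertex and hence passes through a cycle $C$; splicing the prefix of the path up to a vertex $z\in C$, any number of loops around $C$, and the suffix from $z$ to $u$ produces paths from $a$ to $u$ of unbounded length, so $u\in R(A)$. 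Hence $\pi(\supp(c_m))=A^{m\rightarrow}\subseteq R(x)$, and $c_m$ is the desired representation of $x$ as a sum of shifts of vertices of $R(x)$, proving Part~(2).

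Part~(3) would come next. If $x\in\langle y\rangle$, that is $\sum_{i=1}^k{}^{n_i}y=x+z$ for some $z\in T_E$, then choosing representing supports $B$ for $y$ and $A_x,A_z$ for $x,z$ exhibits $\sum_i{}^{n_i}y$ with the two supports $B$ and $A_x\cup A_z$, so Part~(1) together with additivity of $R$ over unions gives $R(y)=R(B)=R(x)\cup R(z)\supseteq R(x)$. Conversely, if $R(x)\subseteq R(y)$, use Part~(2) to write $x$ as a finite sum of shifts ${}^nv$ with $v\in R(x)\subseteq R(y)$; by $\Z$-equivariance it suffices to put each such $v(0)$ below a single shift of $y$. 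Now $v\in R(y)=R(B)$ for a representing support $B$ of $y$, and every vertex of $R(B)$ is reachable from $B$ in $E$; fixing a path from some $b\in B$ to $v$ of length $\ell$ and flowing $b(j)$ along it gives $b(j)\ge v(j+\ell)$, and since $b(j_0)\le y$ for some $j_0$ (as $b$ lies in a support of $y$) we obtain $v(0)\le{}^{-(j_0+\ell)}y\in\langle y\rangle$.

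Finally, for Part~(4) I would take $\Phi([x])=R(x)$. This is well defined because $\langle x\rangle=\langle y\rangle$ forces $R(x)\subseteq R(y)\subseteq R(x)$ by Part~(3); injective because $R(x)=R(y)$ gives $x\in\langle y\rangle$ and $y\in\langle x\rangle$ by Part~(3), hence $\langle x\rangle=\langle y\rangle$; and surjective because for $A\in\mathscr{L}(E)$ the element $\sum_{a\in A}a$ has a representation supported on $A$, so $R\bigl(\sum_{a\in A}a\bigr)=R(A)=A$. The assignments $A\mapsto[\sum_{a\in A}a]$ and $[x]\mapsto R(x)$ are then mutually inverse, using $R(R(x))=R(x)$ for $R(x)\in\mathscr{L}(E)$. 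The step I expect to be the real obstacle is the flow-preservation claim underlying Part~(1): the set $R(\pi(\supp(\,\cdot\,)))$ is defined by a tail condition that a priori interacts badly with $\to$, and the key realization is to collapse to a single vertex flow, where the elementary identities $R(B^{\rightarrow})=R(B)$ and $R(A\cup B)=R(A)\cup R(B)$ make the two sides coincide on the nose. A secondary point requiring care is the pigeonhole step in Part~(2): one must ensure that \emph{every} endpoint of a sufficiently long path from $A$ --- not merely the vertices lying on cycles --- ends up inside $R(A)$.
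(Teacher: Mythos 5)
Your proof is correct and follows essentially the same route as the paper's: the Confluence Lemma combined with the identities \(R(A^{\rightarrow})=R(A)\) and \(R(A\cup B)=R(A)\cup R(B)\) for part (1), iterated flowing of the support for part (2), reachability of leaves from the support for part (3), and the induced bijection for part (4). The only cosmetic difference is in part (2), where you use a pigeonhole/cycle-splicing argument to show \(A^{m\rightarrow}\subseteq R(A)\) for large \(m\), while the paper invokes stabilization of the decreasing sequence \(\bigcup_{m\geq n}A^{m\rightarrow}\); both rest on finiteness of \(E^0\).
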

\begin{proof}
    {\color{white},}
    \begin{enumerate}[label=(\arabic*)]
        \item By the Confluence Lemma, if we begin with two distinct representations of \(x\), there is a third to which they both flow. Moreover, for any set $A \subseteq E^0$, we have $R(A) = R(A^\to)$. Statement (1) now follows.
        %Thus it suffices to prove that flowing vertices preserve sets of leaves. Moreover, as \(R(A\cup B)=R(A)\cup R(B)\), %{\color{red}I don't know if I believe this.  Wouldn't it be possible to have something you can only flow to by passing alternately through vertices from $A$ and from $B$?  Then the result would be in $R(A \cup B)$ but not $R(A)$ or $R(B).$} it also suffices to consider a flow of a single vertex.
        
        %In \(F_{\overline{E}}\), consider a flow of the form
        % Observe that a one-step flow of a vertex is of the form
        % \[{}^iv\to\sum_{e\in s^{-1}(v)}{}^{i+1}r(e).\]
       % The support of the left representation is \(\left\{v\right\}\), with set of leaves \(R(\left\{v\right\})\), whereas the support of the right representation is \(r(s^{-1}(v))=\left\{v\right\}^\rightarrow\), with set of leaves \(R(\left\{v\right\}^\rightarrow)=R(\left\{v\right\})\), so they coincide.
       
        \item We start with a representation \(x=\sum_{v\in A,n\in\mathbb{Z}}k_{n,v}({}^nv)\) supported on a subset \(A\) of \(E^0\). The definition of \(T_E\) gives us
        \begin{align*}
        x
            &=\sum_{v\in A,n\in\mathbb{Z}} k_{n,v}({}^nv)
                =\sum_{\substack{v\in A,n\in\mathbb{Z}\\v\text{ a sink}}}k_{n,v}({}^nv)
                +\sum_{\substack{v\in A,n\in\mathbb{Z}\\v\text{ not a sink}}}\sum_{e\in s^{-1}(v)}k_{n,v}({}^{n+1}r(e))\\
            &=\sum_{v\in A^{\to},m\in\mathbb{Z}} k'_{m,w}({}^mw),
        \end{align*}
        where \(k'_{m,w}\) is the sum of all terms of the form \(k_{m,w}\) (if \(w\in A\) is a sink) and the form \(k_{m-1,v}\), where \(v\in s(r^{-1}(w))\cap A\). By hypothesis, for all $v\in A$ there exists $n$ so that $k_{n,v} \geq 1$; consequently, for all $w\in A^\to$ there exists $m$ so that $k_{m,w}' \geq 1$.  That is, this is a representation of \(x\) supported on \(A^\to\).
        
        By definition
        \[R(x)=R(A)=\bigcap_{n\geq 1}\left(\bigcup_{m\geq n}A^{m\to}\right).\]
        As \(E^0\) is finite and the intersection above is decreasing, the sequence of sets being intersected stabilizes, i.e., there exists \(M\) such that \(\bigcup_{m\geq n}A^{m\to}=R(x)\) for all \(n\geq M\). Iterating the procedure in the previous paragraph, we can find a representation of \(x\) supported on \(A^{M\to}\), which satisfies the required properties.

        \item If \(x\in\langle y\rangle\), then in \(M_E\) we have \(x\leq ny\) for some \(n\in\mathbb{N}\). By the Confluence Lemma on \(E\), we can write \(x\) and \(y\) in \(M_E\) in the form
        \[x=\sum_{v\in E^0}k(v)v\qquad\text{and}\qquad y=\sum_{v\in E^0}k'(v)v\]
        where \(0 \leq k(v)\leq k'(v)\). By definition,
        \[R(x)=R(\left\{v\mid k(v)\geq 1\right\})\]
        and similarly for \(R(y)\), so \(R(x)\subseteq R(y)\).
        
        Conversely, suppose that \(R(x)\subseteq R(y)\). Then any \(v\in R(x)\subseteq R(y)\) is flowed into by some vertex which appears in a representation of \(y\) in \(T_E\).  That is, \(v\leq {}^jy\) for some  \(j\in \Z\). 
        
        Therefore, all \(v\in R(x)\), seen as elements of \(T_E\), belong to \(\langle y\rangle\). Taking shifts and sums, the previous item allows us to conclude that \(x\in\langle y\rangle\).
        \item By the previous item, the set of leaves of \(x\) depends only on the Archimedean class of \(x\), so the map \([x]\mapsto R(x)\) is well-defined.
        For \(A\in\mathscr{L}(E)\), it is clear that \(R(\sum_{a\in A}a)=R(A)=A\).

        Conversely, let \(x\in T_E\) and \(\overline{x}=\sum_{v\in R(x)}v\). We need to prove that \(x\) and \(\overline{x}\) belong to the same Archimedean class. From (b), we know that \(x\) may be written as a sum of shifts of vertices \({}^iv\) with \(v\in R(x)\). Since \({}^iv\leq {}^i\overline{x}\),  each \({}^iv\) belongs to \(\langle \overline{x}\rangle\), and so does their sum, \(x\).
        
        On the other hand, the definition of \(R(x)\) gives that any \(v\in R(x)\) is flowed into by some vertex in some representation of \(x\). This implies that, for \(v\in R(x)\), we have \(v\leq{}^ix\) for some \(i\), so \(v\in\langle x\rangle\). Summing over \(v\in R(x)\) yields \(\overline{x}\in\langle x\rangle\).\qedhere
    \end{enumerate}
\end{proof}

Now, let \(M\) be a monoid which, in principle, does not carry a \(\mathbb{Z}\)-action. Then, Archimedean classes of \(M\) are defined as if \(M\) was given the trivial \(\mathbb{Z}\)-action: \({}^n x=x\) for all \(n\in\mathbb{Z}\) and \(x\in M\). This, in particular, applies to the graph monoid \(M_E\) of a row-finite graph \(E\).  This leads to the following Corollary.

\begin{corollary}
    For a finite graph \(E\), the map
    \[A\mapsto \sum_{a\in A}a\]
    establishes a bijection between the collection of sets of leaves of \(E\) and the collection of Archimedean classes of \(M_E\).
\end{corollary}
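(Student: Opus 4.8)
The plan is to recognize this corollary as the \(M_E\)-analogue of Theorem~\ref{leafandarc}(4): computing Archimedean classes of \(M_E\) amounts to equipping \(M_E\) with the trivial \(\mathbb Z\)-action, so that for \(x\in M_E\) the ideal \(\langle x\rangle\) of \eqref{eq:order-ideal} is just the ordinary order ideal \(\{y\in M_E\mid y\le nx\text{ for some }n\in\mathbb N\}\). Every tool used to prove Theorem~\ref{leafandarc} is already available at the level of \(M_E\): the Confluence Lemma~\ref{confuu} is stated directly for \(M_E\); the flow relation \(\to_1\), the operation \(A\mapsto A^{\rightarrow}\) and the sets of leaves \(R(A)\) live entirely on subsets of \(E^0\); the identity \(R(A)=R(A^{\rightarrow})\) and Proposition~\ref{prop.finalsets} hold verbatim; and \(E^0\) is finite. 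So the strategy is simply to run the proof of Theorem~\ref{leafandarc} with all shifts \({}^n\) deleted. (Equivalently, one could work through the canonical surjective monoid homomorphism \(T_E\to M_E\), \(v(i)\mapsto v\), which is \(\mathbb Z\)-equivariant for the trivial action on \(M_E\) and sends the support set of \(x\) to that of its image; but this still requires the \(M_E\)-versions of the steps below, so it is no shorter.)

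In detail, I would first define, for nonzero \(x\in M_E\) with a representative \(\sum_{v\in A}k_v v\) in \(F_E\) (with each \(k_v\ge 1\)), the set \(R(x):=R(A)\); by the Confluence Lemma any two such representatives flow to a common one, and since \(R(A)=R(A^{\rightarrow})\) this makes \(R(x)\) well defined. Next, by ``letting the vertices flow'' as in \eqref{arrow11} and using that the decreasing family \(\bigcup_{m\ge n}A^{m\rightarrow}\) stabilizes to \(R(x)\) (here finiteness of \(E^0\) enters), one shows every \(x\in M_E\) has a representative supported on a subset of \(R(x)\), i.e.\ \(x=\sum_{v\in R(x)}k_v v\). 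Then one proves, exactly as in the proof of Theorem~\ref{leafandarc}(3), that \(x\in\langle y\rangle\) if and only if \(R(x)\subseteq R(y)\): forward, using the Confluence Lemma to compare representatives coefficientwise; backward, because \(R(x)\subseteq R(y)\) forces each \(v\in R(x)\) to satisfy \(v\le y\) in \(M_E\), whence \(x\in\langle y\rangle\) by the previous step. Finally, invoking Proposition~\ref{prop.finalsets} (a set of leaves \(L\) satisfies \(L=R(L)\)), the map \(L\mapsto\big[\sum_{v\in L}v\big]\) satisfies \(R\big(\sum_{v\in L}v\big)=L\), hence is injective, and \([x]\mapsto R(x)\) is a well-defined two-sided inverse, with surjectivity onto Archimedean classes coming from \([x]=\big[\sum_{v\in R(x)}v\big]\) and surjectivity onto \(\mathscr L(E)\) from \(R(x)=R(\supp x)\).

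There is no real obstacle here; the only thing needing attention is purely bookkeeping — verifying that nothing in the proof of Theorem~\ref{leafandarc} secretly relies on the \(\mathbb Z\)-action being nontrivial. The single place where the argument visibly changes is the converse direction of ``\(x\in\langle y\rangle\iff R(x)\subseteq R(y)\)'', where the assertion ``\(v\le{}^{j}y\) for some \(j\in\mathbb Z\)'' is replaced by the simpler ``\(v\le y\)''.
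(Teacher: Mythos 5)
Your proposal is correct, but it takes a more self-contained route than the paper. The paper's proof is a two-line transport argument: it invokes the bijection of Theorem~\ref{leafandarc}(4) for \(T_E\) and asserts that the canonical surjection \(T_E\to M_E\), \({}^nv\mapsto v\), induces a bijection between the \(\mathbb Z\)-Archimedean classes of \(T_E\) and the Archimedean classes of \(M_E\), so that the geometric characterization descends. You instead re-run the entire proof of Theorem~\ref{leafandarc} at the level of \(M_E\) with the trivial action, checking each step (well-definedness of \(R(x)\) via the Confluence Lemma, the representative supported on \(R(x)\), the equivalence \(x\in\langle y\rangle\iff R(x)\subseteq R(y)\), and the resulting bijection with \(\mathscr L(E)\)). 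Both arguments rest on the same geometric characterization via \(R(\cdot)\), so the mathematical content is the same; what your version buys is that it actually justifies the step the paper leaves implicit. For a surjective equivariant homomorphism \(\pi\colon T_E\to M_E\) it is automatic that \(x\asymp y\) implies \(\pi(x)\asymp\pi(y)\), but the converse --- which is what one needs for the induced map on classes to be injective --- is not automatic and is exactly what your direct verification of ``\(x\in\langle y\rangle\iff R(x)\subseteq R(y)\)'' in \(M_E\) supplies (the key point being that \(R\) of the support is insensitive to whether one allows shifts, precisely because \(R(A)\) already aggregates over paths of all lengths). Your observation that the only step that visibly changes is replacing ``\(v\le{}^{j}y\) for some \(j\)'' by ``\(v\le y\)'' is accurate, and in fact that direction becomes slightly cleaner in \(M_E\). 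The argument is complete as outlined.
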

\begin{proof}
   We use the bijection of Theorem \ref{leafandarc} between $\mathscr L(E)$ and the collection of Archimedean classes of $T_E$.  Observe that the map $T_E \to M_E$ given by 
\[{}^nv\mapsto v\quad (v\in E^0,\quad n\in\mathbb{Z})\]
induces a bijection between the collections of $\Z$-Archimedean classes of \(T_E\) (with its usual \(\mathbb{Z}\)-monoid structure) and of \(M_E\) (as a monoid). So, the geometric characterization of Archimedean classes from Theorem \ref{leafandarc} also applies to \(M_E\).
\end{proof}

\subsection{Meteor graphs and their Archimedean classes}
\label{sec:arch-for-meteor}

In this section, we specialize our study of Archimedean classes to the case of meteor graphs, which are the objects of interest in this paper.
Before we define meteor graphs, recall that a directed graph such that every vertex lies on at most one cycle is called a \emph{graph with disjoint cycles}. Clearly, in such a graph any two distinct cycles do not have a common vertex.    For cycles $C_1$, $C_2$ in a graph $E$, we write $C_1 \Rightarrow C_2$ if there exists a path that starts in $C_1$ and ends in $C_2$. 
For the class of graphs with disjoint cycles, $\Rightarrow$ is a reflexive, transitive, antisymmetric relation. 
%For the class of graphs with disjoint cycles, $\Rightarrow$ is an equivalence relation. {\color{red}E: This is not true!!! $C_1 \Rightarrow C_2$ need not imply $C_2 \Rightarrow C_1$.} 

\begin{definition}
    \label{def:meteor}
A \emph{meteor graph} is a connected essential graph $E$ %(i.e., no sinks or sources) 
consisting of two disjoint cycles $C_0^E, C_1^E$ {and the paths connecting these cycles}. %We will further study these graphs in \S\ref{meteorchap}.

In a meteor graph $E$, the two cycles must necessarily be related by  $C_0^E \Rightarrow C_1^E$.  We call $C_0^E$ the {\em source cycle} of $E$, and $C_1^E$ the {\em sink cycle} of $E$.  

 A path $\alpha=p_0 p_1 \cdots p_n$ in a meteor graph $E$ is called a \emph{trail} if $s(p_0) \in (C^E_0)^0$, $r(p_n) \in (C^E_1)^0$, and $s(p_i) \not \in (C^E_0)^0 \cup (C^E_1)^0$ for $1\leq i \leq n$.
 For a trail $\alpha$, its {\em interior} is $p^0 \backslash ((C^E_0)^0 \cup (C^E_1)^0)$. Note that the interior could be an empty set. 
\end{definition}

The following result can be extended to graphs with disjoint cycles, but we will write it in the special case of meteor graphs, which gives a hint of how the talented monoid will be used in Theorem \ref{thm:main} to establish our main results on symbolic dynamics. %Recall that for cycles $C_1$, $C_2$ in a graph, we write $C_1 \Rightarrow C_2$, if there exists a path that starts in $C_1$ and ends in $C_2$.

\begin{proposition}\label{unipresen} 
Let $E$ be a meteor graph, containing cycles $C_0^E$ and $C_1^E$ with $C_0^E \Rightarrow C_1^E$. Let $v$ and $w$ be vertices on $C_0^E$ and $C_1^E$, respectively. Then for any element $x\in T_E$, there exists $j\in \mathbb Z$ such that 
$x$ can be uniquely written as 
\[\sum_{0\leq i< p} a_iv(j-i) +  \sum_{0\leq i< q} b_iw(i),\]
where $|C_0^E|=p$, $|C_1^E|=q$ and $a_i,b_i \in \mathbb N$.  
    
\end{proposition}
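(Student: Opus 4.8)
The plan is to use the structure theory of Archimedean classes developed in Theorem~\ref{leafandarc} together with the Confluence Lemma, exploiting the very special geometry of a meteor graph: a source cycle $C_0^E$, a sink cycle $C_1^E$, and trails joining them, with all cycles disjoint. First I would establish the \emph{existence} of such a representation. Starting from an arbitrary nonzero element $x\in T_E$, Theorem~\ref{leafandarc}(2) lets me rewrite $x$ as a sum of shifts of vertices lying in $R(x)$. By Proposition~\ref{helpprop}, once any vertex appearing in a representation of $x$ can reach a cycle, the entire cycle lies in $R(x)$; since $E$ is a meteor graph, the only cycles are $C_0^E$ and $C_1^E$, and $C_0^E\Rightarrow C_1^E$, so $R(x)$ is one of: $\emptyset$, all of $(C_1^E)^0$, or all of $(C_0^E)^0\cup(C_1^E)^0$ (the hereditary closure of whichever cycles are hit — interior trail vertices are never leaves since they lie on no cycle and are not sinks, and a meteor graph has no sinks). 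In every case $R(x)\subseteq (C_0^E)^0\cup (C_1^E)^0$. Now, using the defining relations of $T_E$ to ``flow'' each vertex of a representation of $x$ forward until it lands on a cycle — which happens in finitely many steps because $E$ is finite and essential and every infinite path eventually enters $C_0^E$ or $C_1^E$ — I can replace $x$ by a representation supported entirely on $(C_0^E)^0\cup(C_1^E)^0$. Then, within each cycle, the relation ${}^n v = {}^{n+1}(\text{next vertex on the cycle})$ lets me transport all the mass at cycle vertices onto the single chosen base vertex $v$ (resp.\ $w$), at the cost of shifting indices; collecting terms and reducing the $C_0^E$-indices modulo $p$ (writing each as $j-i$ with $0\le i<p$ for a suitable common $j$) and the $C_1^E$-indices modulo $q$ gives the claimed form $\sum_{0\le i<p} a_i v(j-i) + \sum_{0\le i<q} b_i w(i)$. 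The freedom in $j$ is exactly the freedom to shift, which I would pin down by a normalization (e.g.\ requiring $a_0\ge 1$ when the $C_0$-part is nonzero, or fixing $j$ once and for all via the action).

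For \emph{uniqueness}, I would pass to the covering graph $\overline E$ via Theorem~\ref{hgfgfgggf}, identifying $T_E\cong M_{\overline E}$, and use the Confluence Lemma~\ref{confuu}(1): two elements of $F_{\overline E}$ are equal in $M_{\overline E}$ iff they flow to a common element. So suppose two expressions of the claimed shape represent the same element $x$. Each such expression, read in $F_{\overline E}$, is a sum of vertices $(v,j-i)$ and $(w,i)$; by confluence both flow to a common $c\in F_{\overline E}$. The key point is that flowing in $\overline E$ from a vertex $(v,k)$ with $v$ on $C_0^E$ either keeps mass moving around the lift of $C_0^E$ (advancing the $\Z$-coordinate by $|C_0^E|=p$ each loop) or sends it down a trail toward a lift of $C_1^E$; crucially, once mass leaves $C_0^E$ it can never return, and $\overline E$ has no cycles at all. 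I would track the ``$C_0$-mass'' and ``$C_1$-mass'' separately: the total $C_0$-contribution $\sum a_i$ is a flow-invariant on the $C_0^E$-part (because $C_0^E$ has no incoming edges from outside itself — it is a source cycle, nothing flows \emph{into} it), so $\sum a_i$ is determined by $x$, and in fact the individual $a_i$ are determined because the relation within a single cycle, once we fix the base vertex and the window $\{j,j-1,\dots,j-p+1\}$ of $\Z$-coordinates, is free — $\bigoplus_{i=0}^{p-1}\N$ with the cyclic shift, i.e.\ the $\Z$-cyclic monoid of rank $p$ (Definition~\ref{cycmond}), which is cancellative. The $C_1^E$-part is handled by the same argument on the sink cycle, noting that the mass entering $C_1^E$ from the trails is already accounted for once we have normalized $x$ to its cycle-supported form. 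Cancellativity of $M_{\overline E}=T_E$ (cited after Theorem~\ref{hgfgfgggf} from \cite{arahazrat}) then upgrades ``same leaves, same totals'' to ``same coefficients'', giving uniqueness.

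The main obstacle I anticipate is the bookkeeping around the shift parameter $j$ and proving that the coefficients $a_i$ (not merely their sum) are uniquely determined. The cyclic monoid $\bigoplus_{i=0}^{p-1}\N$ is \emph{not} free as a $\Z$-monoid — the $\Z$-action identifies all its nonzero elements into one Archimedean class, as the Example after Definition~\ref{cycmond} shows — so uniqueness here is genuinely a statement about the \emph{chosen representative}: once $j$ is fixed (equivalently, once we fix which coset of $p\Z$ the ``top'' index lives in), the map $(a_0,\dots,a_{p-1})\mapsto \sum a_i v(j-i)$ is injective, and this is what must be checked carefully, presumably again via confluence in $\overline E$ by showing that $\sum a_i (v, j-i)$ and $\sum a_i' (v, j-i)$ flowing to a common element in $F_{\overline E}$ forces $a_i = a_i'$ since distinct $\Z$-coordinates in $\overline{E}^0$ are distinct vertices and flowing strictly increases $\Z$-coordinates. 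I would isolate this as a small lemma about the $\Z$-cyclic monoid. Everything else — the reduction of an arbitrary $x$ to cycle-supported form, and the separation of $C_0$- from $C_1$-mass — follows fairly directly from Propositions~\ref{prop:flowpath}, \ref{helpprop}, and~\ref{prop.finalsets} together with the disjoint-cycles geometry of meteor graphs.
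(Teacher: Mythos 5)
Your proposal is correct, and its existence half is essentially the paper's argument: rewrite $x$ as a sum of shifts of $v$ and $w$ (interior trail vertices flow into the sink cycle), then use the relations $v = v(p) + \sum_l e_l\, w(t_l)$ and $w = w(q)$ to push every $C_0$-index into a window of length $p$ headed by the largest occurring index $j$ (note the relation only lets you \emph{increase} indices by multiples of $p$, which is why the window is anchored at the top) and to reduce the $C_1$-indices modulo $q$. Where you genuinely diverge is uniqueness. The paper's route is structural: quotient by the order ideal $\langle w\rangle$ to get $T_E/\langle w\rangle \cong T_{C_0^E}$, the $\Z$-cyclic monoid of rank $p$, in which the fixed-window representation is visibly free, so $a_i=a_i'$; then cancel the common $v$-part using cancellativity of $T_E$; then observe that the residual identity $\sum b_i w(i)=\sum b_i' w(i)$ already holds in the submonoid $T_{C_1^E}\hookrightarrow T_E$ (injective because $(C_1^E)^0$ is hereditary), which is again cyclic. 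Your route is a hands-on confluence/token-tracking argument in $F_{\overline E}$; it works, and it has the virtue of making explicit the point the paper leaves implicit, namely \emph{why} the window representation in a cyclic monoid is free (a flowed token's pair of cycle-position and $\Z$-level determines the residue $i$ it came from). One caution: your claim that the mass entering $C_1^E$ from the trails is ``already accounted for'' after normalization is not literally true --- flowing the $v(j-i)$ summands in $\overline E$ still sheds tokens down the trails onto the sink cycle, so the $C_1$-part of a common refinement mixes contributions from the $a$-part and the $b$-part and cannot be compared directly. The repair is exactly the sequencing you gesture at with cancellativity, and it is the paper's sequencing: first pin down the $a_i$ (your $C_0$-mass argument is sound, since nothing in $\overline E$ flows into the lift of $C_0^E$), then cancel the now-identical $v$-parts, and only then run the token argument on an identity supported purely on the hereditary sink cycle. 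With that order made explicit, your proof closes.
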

\begin{proof}
It is easy to observe that we can write 
\begin{equation}\label{genxy}
x = \sum_{i=0}^n c_i v(r_i) + \sum_{i=0}^m d_i w(s_i),
\end{equation} for some $c_i, d_i \in \mathbb N$ and  $r_i, s_i \in \mathbb Z$. 
Moreover, for any vertex $z$ in the interior of a trail of $E$, there exists $j \in \N$ such that $z(j) = w$.  Consequently, $z = z(0) = w(-j)$.  Using this observation, and the facts that $C_0^E \Rightarrow C_1^E$ and $ |C_0^E|=p,$ %and $E$ contains a finite number of finite-length trails, there exists $f \geq 1$ so that 
we conclude that 
\begin{equation}\label{enjoymil}
v = v(p)+\sum_l e_l w(t_l),  
\end{equation} where $e_l \in \mathbb N$ and $t_l \in \mathbb Z$. Without loss of generality, we can assume that $r_n$ is the largest integer among $\{r_1,\dots,r_n\}$. For any $i$, write $r_n - r_i = pl_i +r'_i$, where $0 \leq r'_i < p$. Thus $r_n - r'_i= r_i+pl_i$. By repeated use of~(\ref{enjoymil}), we can write 
\[v(r_i) = v(r_i + pl_i ) + \sum_l e'_l w(t'_l) =  v(r_n- r'_i ) + \sum_l e'_l w(t'_l). \]
Substituting now into the identity~(\ref{genxy}) we obtain 
\begin{equation*}
x = \sum_{i=0}^n c_i v(r_n-r'_i) + \sum_i d'_i w(s'_i).
\end{equation*}

Setting $j:= r_n$, since $0\leq r_i' < p $, we can write 
\[x = \sum_{0\leq i< p} a_iv(j-i) + \sum_i d'_i w(s'_i).\]

On the other hand, since $C_1^E$ is a cycle of length $q$ with no exit,  %we have that 
$w=w(q)$ in $T_E$. Thus $w(s)=w(s')$ whenever $s \equiv s' \pmod q$. This allows us to re-write $x$ as  
\[x = \sum_{0\leq i< p} a_iv(j-i) +  \sum_{0\leq i< q} b_iw(i).\]

Now we show that this presentation is unique. Suppose that we can also write 
\[x = \sum_{0\leq i< p} a'_iv(j-i) +  \sum_{0\leq i< q} b'_iw(i).\]
We will show that $a_i=a'_i$ for all $0\leq i <p$ and $b_i=b'_i$ for all $0\leq i <q$ . 

Consider the $\mathbb Z$-order ideal $\langle w \rangle$ generated by $w$ (cf.~Equation \eqref{eq:order-ideal}). The fact that $v(j) \leq {}^p v(j)  $, for any $j\in \Z$, implies that $v(j) \not\in \langle w \rangle$.  Indeed, the natural inclusion $T_{C_0^E} \hookrightarrow T_E$ descends to an isomorphism $T_E / \langle w \rangle \cong T_{C_0^E}$. The images of the presentations of $x$ in this quotient monoid are 
\[ \sum_{0\leq i< p} a'_iv(j-i) = \sum_{0\leq i< p} a_iv(j-i).\]
Since $T_{C_0^E}$ is a cyclic monoid (see Definition~\ref{cycmond}), it follows that $a_i =a'_i$, for all $i$'s. 
Since $T_E$ is cancellative, we can remove these portions from the representation of $x$ and obtain 
\begin{equation}\label{hghghg1}
  \sum_{0\leq i< q} b_iw(i)= \sum_{0\leq i< q} b'_iw(i) \in T_E.  
\end{equation} 
Since the cycle $C_1^E$ containing $w$ has no exit,   the vertices $(C_1^E)^0$ form a hereditary subset of $E$. Consider the monoid $T_{C_1^E}$. Using the Confluence Lemma~\ref{confuu}, and the fact that $(C^E_1)^0$ is hereditary, 
it is easy to check that the canonical map 
$T_{C_1^E}\rightarrow T_E$ is injective. Therefore, the equality (\ref{hghghg1}) also holds in 
$T_{C_1^E}$. However the monoid $T_{C_1^E}$ is cyclic, and thus it follows that $b_i =b'_i$, for all $i$'s.
\end{proof}

We now come to the main Theorem of this section, in which we describe the Archimedean components of a meteor graph.

\begin{theorem}
    \label{ghostagain}

Let $E$ be a meteor graph with $C_0^E \Rightarrow C_1^E$, $v\in (C_0^E)^0$, $w\in (C_1^E)^0$ and $p=|C_0^E|$, $q=|C_1^E|$.  Let $T_E$ be the associated talented monoid. Then, $T_E$ can be decomposed as the disjoint union of its Archimedean $\Z$-semigroup components:
\[T_E = \{0\} \, \dot \cup \, [v] \, \dot \cup\,  [w],\] where 
\begin{align*}
[w] & = \Big \{ \sum_{0\leq i \leq q} k_i w(i)\mid k_i \in \mathbb N,  \exists i \text{ s.t. } k_i \not = 0  \Big \}, \\
[v] & = \Big \{ \sum_i l_i v(i) + \sum_{0\leq j \leq q} k_j w(j)  \mid  l_i, k_j \in \mathbb N, \exists i \text{ s.t. } l_i \not = 0 \Big \}. \\
\end{align*}
\end{theorem}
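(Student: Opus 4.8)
The plan is to combine two ingredients already developed in the excerpt: the geometric description of Archimedean classes in terms of sets of leaves (Theorem~\ref{leafandarc}) and the unique normal form for elements of $T_E$ established in Proposition~\ref{unipresen}. Since $E$ is a meteor graph, $R(A)$ for a nonempty $A \subseteq E^0$ depends only on which cycles are ``reachable'' from $A$: by Proposition~\ref{helpprop}, if a vertex of $A$ reaches the source cycle $C_0^E$, then (because $C_0^E \Rightarrow C_1^E$) $R(A)$ contains all of $(C_0^E)^0 \cup (C_1^E)^0$, and if some vertex of $A$ only reaches $C_1^E$ (but not $C_0^E$), then $R(A) = (C_1^E)^0$. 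A vertex in the interior of a trail, or on $C_0^E$, always reaches $C_1^E$; a vertex on $C_1^E$ reaches only $C_1^E$. Hence the only possible values of $R(A)$ for nonempty $A$ are $(C_1^E)^0$ and $(C_0^E)^0 \cup (C_1^E)^0$, so by Theorem~\ref{leafandarc}(4) there are exactly three Archimedean classes: $\{0\}$ (corresponding to $R(\emptyset) = \emptyset$), the class $[w]$ with $R = (C_1^E)^0$, and the class $[v]$ with $R = (C_0^E)^0 \cup (C_1^E)^0$.

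Next I would identify these classes with the stated sets. First I would check the disjoint-union decomposition: every nonzero $x \in T_E$ has $R(x)$ equal to one of the two nonempty leaf sets, and the map $[x] \mapsto R(x)$ is a bijection, so $T_E = \{0\} \,\dot\cup\, [v] \,\dot\cup\, [w]$ as a set. Then I would pin down $[w]$: by Proposition~\ref{unipresen}, any $x \in T_E$ has a unique expression $\sum_{0\le i<p} a_i v(j-i) + \sum_{0\le i<q} b_i w(i)$, and I claim $x \in [w]$ iff all the $a_i$ vanish. Indeed, if some $a_i \neq 0$ then $v(j-i) \le x$, and since $v$ reaches $C_0^E$ (trivially) the leaves $R(x)$ contain $(C_0^E)^0$, putting $x \in [v]$; conversely if all $a_i = 0$, then $x$ is supported on $(C_1^E)^0$ (after absorbing $w(q) = w$, rewriting any index mod $q$), and a nonzero such element has $R(x) = (C_1^E)^0$, so $x \in [w]$. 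Writing the allowed indices as $0 \le i \le q$ (with the redundancy $w(q) = w(0)$ permitting the closed-interval form in the statement) gives exactly the displayed description of $[w]$. For $[v]$, the complementary elements are precisely those whose normal form has some $a_i \neq 0$; using $v(i) = v(i+p) + \sum_l e_l w(t_l)$ (Equation~\eqref{enjoymil}) one can re-expand the $v$-part over arbitrary shifts $v(i)$ rather than only the $p$ consecutive ones, absorbing the extra $w$-terms into the $w$-part, which yields the stated form $\sum_i l_i v(i) + \sum_{0\le j\le q} k_j w(j)$ with some $l_i \neq 0$. That these two sets together with $\{0\}$ exhaust $T_E$ and are pairwise disjoint follows from the uniqueness in Proposition~\ref{unipresen}.

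The main obstacle I anticipate is bookkeeping rather than conceptual: one must be careful that the ``presentations'' in the statement of the theorem are not claimed to be unique (unlike the normal form of Proposition~\ref{unipresen}), so the argument should establish set equalities, checking both that every element of the displayed sets lies in the correct Archimedean class and that every element of that class can be put in the displayed form. The forward direction (an element in the displayed set has the right leaves) uses $v(i) \le x \Rightarrow (C_0^E)^0 \cup (C_1^E)^0 \subseteq R(x)$ and, for $[w]$, that the $w(i)$ are supported on the hereditary set $(C_1^E)^0$ so contribute only $(C_1^E)^0$ to the leaves. The reverse direction (an element of $[v]$, resp.\ $[w]$, is expressible in the displayed form) reduces via Proposition~\ref{unipresen} to observing that the normal form already has the right shape, then loosening the index ranges using $w(q) = w$ and Equation~\eqref{enjoymil}. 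A secondary point to handle with care is the claim $v(j) \notin \langle w\rangle$, which follows from $v(j) \le {}^p v(j)$ together with $T_E/\langle w\rangle \cong T_{C_0^E}$ (as already noted in the proof of Proposition~\ref{unipresen}); this is what cleanly separates $[v]$ from $[w]$.
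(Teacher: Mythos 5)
Your proposal follows essentially the same route as the paper: use Theorem~\ref{leafandarc}(4) together with Proposition~\ref{helpprop} to count the leaf sets (hence the Archimedean classes), then invoke Proposition~\ref{unipresen} for the explicit descriptions of $[v]$ and $[w]$. One detail needs correcting, though it does not derail the argument. You assert that the only possible nonempty leaf sets are $(C_1^E)^0$ and $(C_0^E)^0\cup(C_1^E)^0$; the latter is wrong, and in fact $(C_0^E)^0\cup(C_1^E)^0$ is typically not a leaf set at all (it fails $B=B^{\rightarrow}$ whenever a trail has nonempty interior). When $A$ meets the source cycle, every vertex $u$ in the interior of a trail also lies in $R(A)$, since one reaches $u$ from a source-cycle vertex by paths of arbitrarily large length (wind around $C_0^E$ any number of times before entering the trail); so the correct value is $R(A)=E^0$. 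This extra observation is needed to conclude that \emph{all} sets meeting $(C_0^E)^0$ share a single leaf set --- merely knowing $R(A)\supseteq(C_0^E)^0\cup(C_1^E)^0$ would leave open the possibility of several distinct ``large'' leaf sets and hence more than two nonzero Archimedean classes. With that one-line repair (which is exactly how the paper argues), your proof is complete; the remainder, including the reduction of the class descriptions to the normal form of Proposition~\ref{unipresen} and the separation $v(j)\notin\langle w\rangle$, matches the paper's proof.
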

\begin{proof}

By Theorem~\ref{leafandarc}(4), there is a bijection between $\mathscr{L}(E)= \{R(A)\mid A\subseteq E^0\} $ and the Archimedean classes of $T_E$. We first observe that, for a nonempty set $A\subseteq E^0$, if $A \cap (C_0^E)^0 \not = \emptyset$ then $R(A)=R(v)=E^0$, and $R(A)=R(w)=(C^E_1)^0$ otherwise. 
To see this, suppose first that $z \in A \cap (C_0^E)^0 \not = \emptyset$. By Proposition~\ref{helpprop}, $(C_0^E)^0 \cup (C_1^E)^0\subseteq R(z)$. If $w$ is a vertex on a trail between $C_0^E$ and $C_1^E$, then there is a path  $\gamma$  from $z$ to $w$. 
Notice that for any $m \geq 0$, $w \in \{z\}^{(m\cdot p+ |\gamma| )\to} $. Thus, $w\in R(z)$ as well, and consequently   $R(z)=E^0$. Since $R(z)\subseteq R(A)$, it follows that $R(A)=E^0$. 

On the other hand, if $A \cap (C_0^E)^0  = \emptyset$, then all elements of $A$ have to be either on the trails between $C_0^E$ and $C_1^E$, or on $(C_1^E)^0$. Again, Proposition~\ref{helpprop} implies that $(C_1^E)^0\subseteq R(A)$. Let $l$ be the length of the longest trail between $C_0^E$ and $C_1^E$. Since %$E$ is of the form 
$C_0\Rightarrow C_1$, it follows that  $A^{k\rightarrow}\subseteq (C_1^E)^0$, for any $k\geq l$. Thus $R(A)\subseteq (C_1^E)^0$ as well, and consequently $R(A)=(C_1^E)^0$ if $A \cap (C_0^E)^0 = \emptyset$. %A similar argument shows that $R(w)=C_1^0$.

Now, from Theorem~\ref{leafandarc}(4) we obtain that the Archimedean classes of $T_E$ are given by $[v]$ and $[w]$ (and also $\{0\}$, corresponding to the empty set). This gives the first part of the Theorem. The description of $[w]$ and $[v]$ follows from Proposition~\ref{unipresen}.
\end{proof}

\section{The dynamics of meteor graphs}\label{meteorchap}

In this section, we study the behavior of meteor graphs under the graph moves of in- and out-splitting and prove our main result, Theorem \ref{thm:main}. We start with the following definition which allows us to describe meteor graphs differently.   

\begin{definition}
A cycle $\alpha$ in a directed graph $E$  is a \emph{source cycle} if $|r^{-1}(v)| = 1$ for all $v\in \alpha^0$. \emph{Sink cycles} are defined analogously, as the cycles $\alpha$ where $|s^{-1}(v) |=1$ for all $v\in \alpha^0$. % (see Figure \ref{fig:cycles}).
\end{definition}

\begin{example}

The graph below is a graph with disjoint cycles, consisting of three cycles: \(e_1e_2e_3\) is a source cycle; \(g_1g_2g_3g_4\) is a sink cycle; \(f_1f_2\) is neither.
 
\end{example}

\begin{figure}[!ht]
    \centering
    \begin{tikzpicture}
        \node (c1) at (0,0) {};
        \node (p11) at ([shift=(0:1)]c1) {\(\bullet\)};
        \node (p12) at ([shift=(120:1)]c1) {\(\bullet\)};
        \node (p13) at ([shift=(240:1)]c1) {\(\bullet\)};
        \draw[-latex] (0,0) ++(7:1) arc (7:113:1) node[midway,above right] {\(e_1\)};
        \draw[-latex] (0,0) ++(127:1) arc (127:233:1) node[midway,left] {\(e_2\)};
        \draw[-latex] (0,0) ++(247:1) arc (247:353:1) node[midway,below right] {\(e_3\)};

        \node (u1) at ([shift=(0:1)]p11) {\(\bullet\)};
        
        \node (u2) at ([shift=(0:1)]u1) {\(\bullet\)};
        
        \node (u3) at ([shift=(0:1)]u2) {\(\bullet\)};
        
        \node (c2) at ([shift=(0:1)]u3) {};
        \node (p21) at ([shift=(90:1)]c2) {\(\bullet\)};
        \node (p22) at ([shift=(0:1)]c2) {\(\bullet\)};
        \node (p23) at ([shift=(-90:1)]c2) {\(\bullet\)};
        \draw[-latex] (c2) ++(7:1) arc  (7:83:1) node[midway,above right] {\(g_1\)};
        \draw[-latex] (c2) ++(97:1) arc (97:173:1) node[midway,above left] {\(g_2\)};
        \draw[-latex] (c2) ++(187:1) arc (187:263:1) node[midway,below left] {\(g_3\)};
        \draw[-latex] (c2) ++(277:1) arc (277:353:1) node[midway,below right] {\(g_4\)};

        \draw[-latex] (u1) to[out=60,in=120] node[midway,above] {\(f_1\)} (u2) ;
        \draw[-latex] (u2) to[out=-120,in=-60] node[midway,below] {\(f_2\)} (u1);
        \draw[-latex] (u2) to (u3);
        
        \draw[-latex] (p11)--(u1);
        \draw[-latex] (p13) to[out=-60,in=250] (u3);
    \end{tikzpicture}
 \label{fig:cycles}
\end{figure}
\begin{center}
    
\end{center}

% Recall that a directed graph $E$ is downward directed if for any $v, w\in E^0$ there exists a vertex $u$ and paths $\alpha_v, \alpha_w$ with range $u$ and source  $v, w$ respectively.

Next, we observe that the class of meteor
graphs is closed under the notion of shift equivalence (see Section \ref{dynref}), using Corollary \ref{h99}. 
For the proof of this proposition, we need a few definitions.

\begin{definition}
    \label{def:saturated}
    Let $E$ be a directed graph.
    A subset $H \leq E^0$ is {\em saturated} if $r(s^{-1}(v)) \subseteq H$ implies $v \in H$.
    Given a set $X \subseteq E^0$, its {\em saturation} $\Sigma X$ is the smallest hereditary saturated subset containing $X$.
\end{definition}
It is easy to prove \cite[Lemma~2.2]{hazli} that if $H \leq E^0$ is hereditary and saturated, and we write $E \backslash H$ for the graph $(E^0 \backslash H, \{ e \in E^1: r(e) \not\in H\}, r_E, s_E)$, then $T_E /\langle H \rangle \cong T_{E \backslash H}$.  Moreover, for any $X \subseteq E^0$, we have $\langle X\rangle = \langle \Sigma X \rangle$ as ideals of $T_E$.

For a $\Gamma$-monoid $M$, recall from \cite{hazli} that an element $0 \not = a\in M$ is \emph{periodic} if there is an $0 \not = \alpha \in \Gamma$ such that ${}^\alpha a =a$. If $a\in M$ is not periodic, we call it \emph{aperiodic}. We denote the orbit of the action of $\Gamma$ on an element $a$ by $O(a)$, so $O(a)=\{{}^\alpha a \mid \alpha \in \Gamma \}$.  Furthermore, in a  conical and cancellative $\Gamma$-monoid $M$, we say an element $a$ is \emph{minimal} if $0\not = b\leq a$ then $a=b$.

In the following Proposition, we will use that the set of cycles without exits in a graph $E$ is in one-to-one correspondence with the orbits of minimal periodic elements of the talented monoid $T_E$, \cite[Lemma~5.6]{hazli}.

\begin{proposition}\label{goldenprop}
Let $E$ and $F$ be essential graphs that are shift equivalent. Then $E$ is a meteor graph if and only if $F$ is a meteor graph. 
\end{proposition}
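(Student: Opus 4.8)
The plan is to use the invariant-theoretic characterization of shift equivalence provided by Corollary~\ref{h99} together with the geometric description of Archimedean classes from Theorem~\ref{ghostagain}, so that a shift equivalence $E \sim_{SE} F$ gives us a $\mathbb Z$-monoid isomorphism $T_E \cong T_F$ (via the order-preserving $\mathbb Z[x,x^{-1}]$-module isomorphism of graded $K$-theory, which is known to be equivalent to the talented monoid isomorphism). It then suffices to show that the $\mathbb Z$-monoid isomorphism type of $T_E$ detects the property of being a meteor graph. Since $E$ is assumed essential throughout, the only thing to pin down is that $E$ has exactly two cycles, that they are disjoint, and that the remaining vertices lie on paths connecting them --- equivalently, that $E$ has no sinks or sources (already granted) and the poset of cycles is precisely $C_0 \Rightarrow C_1$ with everything else flowing from $C_0$ into $C_1$.

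The key steps, in order: First I would recall that $E$ is a meteor graph if and only if (i) $E$ is essential; (ii) $E$ has disjoint cycles; (iii) $T_E$ has exactly two orbits of minimal periodic elements (one for each cycle without exit --- note the sink cycle $C_1^E$ has no exit, while the source cycle does have an exit, so one must be slightly careful here); and more robustly, if and only if $T_E$ has exactly three $\mathbb Z$-Archimedean components $\{0\}, [v], [w]$ arranged so that $[w]$ is the (unique) minimal nonzero component and $[v]$ sits strictly above it, with the component structure exactly as in Theorem~\ref{ghostagain}. The cleanest route is to prove the forward direction using Theorem~\ref{ghostagain} directly: if $E$ is a meteor graph then $T_E$ has precisely this three-component Archimedean structure, a property preserved by any $\mathbb Z$-monoid isomorphism, hence $T_F$ has it too. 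Then I would prove the converse: a finite essential graph $F$ whose talented monoid has exactly three $\mathbb Z$-Archimedean components, with the bottom nonzero one corresponding (via Theorem~\ref{leafandarc}(4)) to a set of leaves that is a single cycle and the next one corresponding to all of $F^0$, must have exactly two cycles, which are disjoint, and all vertices must flow into the bottom cycle --- forcing $F$ to be a meteor graph.

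To carry out the converse concretely I would use Theorem~\ref{leafandarc}(4): the Archimedean components of $T_F$ correspond bijectively to the sets of leaves $\mathscr{L}(F)$. Having exactly three components means $\mathscr{L}(F) = \{\emptyset, R(w_0), F^0\}$ for some vertex $w_0$; by Proposition~\ref{prop.finalsets}, each $R(A)$ is a hereditary closure of a union of sinks and cycles, and since $F$ is essential there are no sinks, so every nonempty set of leaves is a hereditary union of cycles. The minimal nonzero set of leaves $R(w_0)$ must then be (the hereditary closure of) a single cycle $C_1$ with no exit (hereditariness plus minimality forces no exit and no other cycle below it). If $F$ had a third cycle $C'$ distinct from $C_1$ and from a putative source cycle, or if two cycles shared a vertex, one would produce a fourth distinct set of leaves (e.g. the hereditary closure of $C' \cup C_1$, or a set of leaves strictly between $R(w_0)$ and $F^0$), contradicting the count; a short case analysis using Proposition~\ref{helpprop} and the fact that $R(\cdot)$ turns unions into unions rules these out. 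The disjointness of the cycles follows because a shared vertex between two cycles would, again via $R$, yield leaf-sets not on our list (alternatively, note two non-disjoint cycles generate, via letting vertices flow, arbitrarily large "loops" that enlarge the hereditary closure in a way incompatible with having exactly one cycle at the bottom). Finally, essentialness plus "$F^0$ is a set of leaves above $R(C_1)$" forces every vertex of $F$ to flow into $C_1$ and to be flowed into from the source cycle, which is exactly the meteor configuration.

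The main obstacle I anticipate is the bookkeeping in the converse direction: translating "exactly three Archimedean components" cleanly into "exactly two cycles, disjoint, with the right connectivity." One must be careful that $\mathscr{L}(F)$ having three elements genuinely forbids configurations such as two disjoint sink-type cycles (which would give leaf-sets $R(C_1), R(C_2), R(C_1 \cup C_2) = R(C_1) \cup R(C_2), F^0$ --- already four or more), a cycle feeding into another cycle feeding into a third, or a cycle with an exit leading nowhere useful --- and that the essential hypothesis (no sinks, no sources) is used in exactly the right places to eliminate the degenerate cases. A secondary subtlety is making sure the isomorphism $T_E \cong T_F$ we actually get from shift equivalence is a genuine $\mathbb Z$-monoid isomorphism (so that it respects the $\mathbb Z$-Archimedean structure, not merely the ungraded one); this is handled by Corollary~\ref{h99} together with the equivalence of (3) and (4) in Conjecture~\ref{conjehfyhtr} in the form already established in the literature, or more directly by the order-isomorphism of graded $K_0$ restricting to an isomorphism of positive cones $\mathcal V^{\gr}$, which is the talented monoid. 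Once these points are nailed down, the proposition follows.
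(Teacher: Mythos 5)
Your reduction of shift equivalence to a $\Z$-monoid isomorphism $T_E\cong T_F$ via Corollary~\ref{h99} matches the paper, but from there you take a different route --- through the Archimedean-class/leaf-set correspondence of Theorem~\ref{leafandarc}(4) rather than through minimal periodic elements and quotient monoids --- and as sketched this route has a genuine gap. The only data you actually transport across the isomorphism are the number of Archimedean classes (three) and their ordering; everything else in your converse is meant to follow from a case analysis on $\mathscr{L}(F)$. That data does not suffice. The step ``hereditariness plus minimality forces no exit and no other cycle below it'' is false: if the minimal nonempty leaf set $B_1$ is the vertex set of a terminal strongly connected piece containing two cycles through a common vertex, then every cycle inside $B_1$ still satisfies $R(\text{cycle})=B_1$, so no fourth leaf set appears and your count is not violated. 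Concretely, let $F$ be a source cycle feeding by one edge into a vertex carrying two loops: $F$ is essential and connected, $\mathscr{L}(F)=\{\emptyset,B_1,F^0\}$ has exactly three elements, yet $F$ is not a meteor graph. Dually, a vertex with two loops feeding into an exitless cycle $C$ gives $\mathscr{L}(F)=\{\emptyset,C^0,F^0\}$, with the bottom leaf set a genuine single cycle without exit and the top one all of $F^0$, and is again not a meteor graph. So the characterization you propose (``three totally ordered Archimedean components, bottom one a cycle, top one all of $F^0$'') is not a characterization; moreover, you never explain how the property ``the bottom leaf set of $F$ is a single cycle'' would be transported across the isomorphism in the first place, since leaf sets are graph data rather than monoid data.

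What is missing is the finer $\Z$-semigroup structure of the classes, and that is exactly where the paper's proof goes instead: a vertex $w_E$ on the sink cycle is a \emph{minimal periodic} element of $T_E$; such elements are preserved by $\Z$-isomorphisms and correspond, up to orbit, to cycles without exit (\cite[Lemma~5.6]{hazli}), so $\phi(w_E)={}^i w_F$ for some $w_F$ on an exitless cycle of $F$; and then $T_F/\langle w_F\rangle\cong T_E/\langle w_E\rangle\cong T_{C_0^E}$ is a $\Z$-cyclic monoid, which forces $F\setminus\Sigma\{w_F\}$ to be a single exitless cycle, whence $F$ is a meteor graph. Both counterexamples above are eliminated precisely by these two steps (the shared-vertex cycles make the relevant class, respectively the relevant quotient, fail to be a free cyclic $\Z$-monoid). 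Your outline can be repaired by replacing the leaf-set counting with the requirement that each Archimedean class of $T_F$ be $\Z$-semigroup isomorphic to the corresponding class of $T_E$ as described in Theorem~\ref{ghostagain}, and then extracting single cycles from the cyclic-monoid structure at the bottom and in the quotient --- but at that point you will have essentially reconstructed the paper's argument.
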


\begin{proof}
    Since $E$ and $F$ are shift equivalent, by Corollary ~\ref{h99},  there is an order-preserving $\mathbb Z[x,x^{-1}]$-module  isomorphism $K_0^{\gr}(L(E)) \cong K_0^{\gr}(L(F))$. Since the positive cone of the graded Grothendieck group $K_0^{\gr}(L(E))$ is $\mathbb Z$-isomorphic to the talented monoid $T_E$~(see \cite{hazli}), it follows that there is a $\mathbb Z$-monoid isomorphism $\phi: T_E\rightarrow T_F$. Let $E$ be a meteor graph with source cycle $C^E_0$ and sink cycle $C^E_1$. Let $w_E\in (C_1^E)^0$. Then $w_E$ is a minimal periodic element in $T_E$, i.e.,  ${}^q w_E =w_E$, where $|C^E_1|=q.$ It follows that  $\phi(w_E)$ is a {minimal} periodic element in $T_F$. Thus $\phi(w_E)={}^i w_F$ for some vertex $w_F$  on a cycle in $F$ without exit and some $i\in \mathbb N$. Call this cycle without exit $C_1^F$. Let $\langle w_E \rangle$ be the $\mathbb Z$-order ideal generated by $w_E$. We have $\phi(\langle w_E \rangle)=\langle w_F \rangle$. Thus, passing the isomorphism $\phi$ to the quotients, we have the induced $\mathbb Z$-monoid isomorphism $\overline \phi: T_E/\langle w_E \rangle \rightarrow T_F/\langle w_F \rangle$. But $T_E/\langle w_E \rangle \cong T_{E}\backslash \Sigma \{ w_E\}$.
%\overline w_E}$, where $\overline w_E$ is the hereditary saturated set generated by $w_E$. 
Since $E$ is a meteor graph, $E \backslash \Sigma \{ w_E\} % \overline w_E
\cong C_0^E$ is a cycle without exit. % (i.e., it is just $C_0^E$). 
Thus $T_E/\langle w_E \rangle$ is a $\mathbb Z$-cyclic %simple periodic
monoid and hence so is  $T_F/\langle w_F \rangle$. Thus, $F\backslash \Sigma \{ w_F \}$
%\overline w_F$ 
consists of one cycle without exit, call it $C_0^F$. This implies that $F$ contains exactly two disjoint cycles $C_0^F$ and $C_1^F$.  As we assumed $F$ was essential, and hence connected, we conclude that $F$ is a meteor graph.
\end{proof}

  %A \emph{meteor graph} is a downward directed graph $E$ with no sinks nor sources;  exactly one source cycle, $C^E_0$; and exactly one sink cycle, $C^E_1$;  and no other cycles. 
  %(Equivalently, a meteor graph is a downward directed graph with no sinks nor sources, and exactly two cycles, in which every vertex is the base of at most one cycle.
   
   %For a meteor graph $E$ with the source cycle $C^E_0$ and the sink cycle $C^E_1$, a minimal-length path $\alpha$ from $C^E_0$ to $C^E_1$ (i.e., such that $s(\alpha)\in (C^E_0)^0$ and $r(\alpha)\in(C^E_1)^0$) is called a \emph{trail} of $E$.}

   %{\color{red} with the above definition if we have two cycles source of length $2$ with $u$ and $v$ and sink cycle of length one, then a path from $u$ going to $v$ and then to the sink cycle is still the minimal length with the above definition... so not good, I changed it to this one below}

%A \emph{meteor graph} is a connected graph \(E\) without sinks nor sources and which contains exactly two cycles, \(\mathfrak{c}^E_0\) and \(\mathfrak{c}^E_1\), such that \(\mathfrak{c}^E_0\) is a source cycle (the meteor) and \(\mathfrak{c}^E_1\) is a sink cycle (the planet). A minimal-length path \(\alpha\) from \(\mathfrak{c}^E_0\) to \(\mathfrak{c}^E_1\) (i.e., such that \(s(\alpha)\in (\mathfrak{c}^E_0)^0\) and \(r(\alpha)\in(\mathfrak{c}^E_1)^0\)) is called a \emph{trail} of \(E\).

In what follows, we show that any meteor graph can be transformed, using only in- and out-splits and -amalgamations, into a meteor graph in which all trails have the same source and consist of exactly one edge. We formalize the concepts involved below.

\begin{definition}
    \label{def:normal-form}
A meteor graph $E$ is in {\em quasi-normal form} if all trails of $E$ consist of exactly one edge.
    A meteor graph $E$ is in {\em normal form} if it is in quasi-normal form and all of the trails of $E$ have the same source.
    %, %and the same range $w_E$, 
   % and all trails consist of exactly one edge.  
\end{definition}

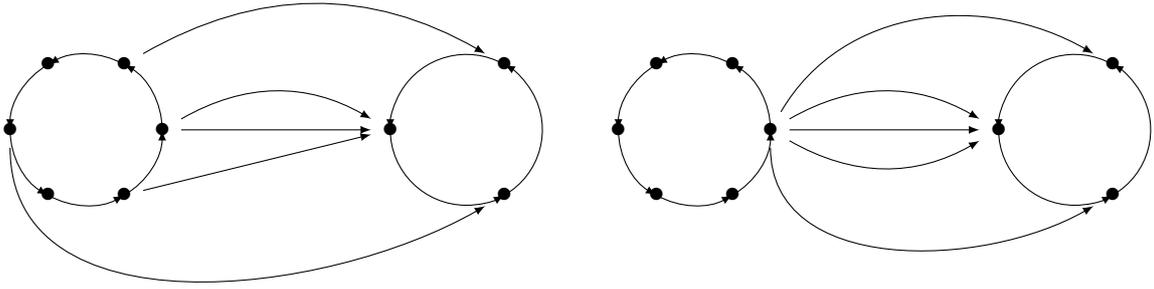
\begin{figure}[!ht]
    \centering
    \begin{tikzpicture}
        \node (center1) at (0,0) {};
        \node (center2) at ([shift={(0:5)}]center1) {};
        \node (a1) at ([shift=(0:1)]center1) {\(\bullet\)};
        \node (a2) at ([shift=(60:1)]center1) {\(\bullet\)};
        \node (a3) at ([shift=(120:1)]center1) {\(\bullet\)};
        \node (a4) at ([shift=(180:1)]center1) {\(\bullet\)};
        \node (a5) at ([shift=(240:1)]center1) {\(\bullet\)};
        \node (a6) at ([shift=(300:1)]center1) {\(\bullet\)};
        \node (b1) at ([shift={(180:1)}]center2) {$\bullet$};
        \node (b2) at ([shift={(60:1)}]center2) {$\bullet$};
        \node (b3) at ([shift={(300:1)}]center2) {$\bullet$};

        \draw[-latex] (a1) arc (0:60:1);
        \draw[-latex] (a2) arc (60:120:1);
        \draw[-latex] (a3) arc (120:180:1);
        \draw[-latex] (a4) arc (180:240:1);
        \draw[-latex] (a5) arc (240:300:1);
        \draw[-latex] (a6) arc (300:360:1);
        \draw[-latex] (b1) arc (-180:-60:1);
        \draw[-latex] (b2) arc (60:180:1);
        \draw[-latex] (b3) arc (-60:60:1);
        
        \draw[-latex] (a1) to (b1);
        \draw[-latex] (a1) to [out=30,in=150] (b1);
        \draw[-latex] (a2) to [out=30,in=150] (b2);
        \draw[-latex] (a6) to (b1);
        \draw[-latex] (a4) to [out=270,in=210] (b3);
        
        %%%%%
        \node (center1) at (8,0) {};
        \node (center2) at ([shift={(0:5)}]center1) {};
        \node (a1) at ([shift=(0:1)]center1) {\(\bullet\)};
        \node (a2) at ([shift=(60:1)]center1) {\(\bullet\)};
        \node (a3) at ([shift=(120:1)]center1) {\(\bullet\)};
        \node (a4) at ([shift=(180:1)]center1) {\(\bullet\)};
        \node (a5) at ([shift=(240:1)]center1) {\(\bullet\)};
        \node (a6) at ([shift=(300:1)]center1) {\(\bullet\)};
        \node (b1) at ([shift={(180:1)}]center2) {$\bullet$};
        \node (b2) at ([shift={(60:1)}]center2) {$\bullet$};
        \node (b3) at ([shift={(300:1)}]center2) {$\bullet$};

        \draw[-latex] (a1) arc (0:60:1);
        \draw[-latex] (a2) arc (60:120:1);
        \draw[-latex] (a3) arc (120:180:1);
        \draw[-latex] (a4) arc (180:240:1);
        \draw[-latex] (a5) arc (240:300:1);
        \draw[-latex] (a6) arc (300:360:1);
        \draw[-latex] (b1) arc (-180:-60:1);
        \draw[-latex] (b2) arc (60:180:1);
        \draw[-latex] (b3) arc (-60:60:1);
        
        \draw[-latex] (a1) to (b1);
        \draw[-latex] (a1) to [out=30,in=150] (b1);
        \draw[-latex] (a1) to [out=60,in=150] (b2);
        \draw[-latex] (a1) to [out=-30,in=210] (b1);
        \draw[-latex] (a1) to [out=270,in=210] (b3);
        
        %%%%%
        
    \end{tikzpicture}\caption{A graph in quasi-normal (not normal) form and a graph in normal form, res\-pec\-ti\-ve\-ly.}
    \label{fig:normal_quasi}
\end{figure}

\begin{proposition}
\label{prop:convert-to-normal}
    Any meteor graph  can be transformed, using only in- and out-splits and 
    -amalgamations, into a meteor graph in normal form.

    % Moreover, it is also possible to perform the following composite moves on a meteor graph in normal form, using only in- and out-splits:
    % \begin{enumerate}[label=(\alph*)]
    %     \item Adding (or subtracting) \(\operatorname{gcd}(p,q)\) delays to a single trail, where \(p\) is the length of the source cycle.
    %     \item Adding one delay to all blades simultaneously.
    % \end{enumerate}
\end{proposition}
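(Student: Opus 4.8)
The plan is to reach normal form in two stages: first make $E$ \emph{quasi-normal} by eliminating all interior vertices, and then, starting from a quasi-normal meteor graph, slide the trail edges until they all leave a single vertex of the source cycle.

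For the first stage I would induct on the number of interior vertices. If there are none, $E$ is already quasi-normal. Otherwise the interior vertices span a finite, nonempty, acyclic subgraph (the two cycles of $E$ are disjoint, so no interior vertex lies on a cycle), so there is a \emph{minimal} interior vertex $u$, that is, one receiving no edge from another interior vertex. Since $C_1^E$ has no exits and no edge ends in $(C_0^E)^0$ except cycle edges, every edge into $u$ starts in $(C_0^E)^0$. Now in-split at $u$ using the partition of $r^{-1}(u)$ into singletons: $u$ is replaced by copies $u_1,\dots,u_e$, each with a single incoming edge, from a vertex $x(i)\in (C_0^E)^0$, and each carrying a parallel copy of every outgoing edge of $u$. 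For a fixed $x\in(C_0^E)^0$ emitting edges to $u$, all the copies $u_i$ with $x(i)=x$, together with the successor $x^{+}$ of $x$ on $C_0^E$, have the same singleton in-neighbourhood $\{x\}$; hence they may be out-amalgamated into a single vertex, which occupies the position of $x^{+}$ on $C_0^E$. Carrying this out for every such $x$ eliminates $u$; the result is again a connected essential graph built from the same two disjoint cycles, of unchanged lengths, together with the paths joining them, and the edges formerly leaving $u$ now leave vertices of the source cycle. Since the number of interior vertices has strictly dropped, the induction closes. (Symmetrically, one could instead repeatedly absorb a \emph{maximal} interior vertex into $C_1^E$ via an out-split followed by an in-amalgamation.)

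For the second stage, write $C_0^E=x_0x_1\cdots x_{p-1}$ and $C_1^E=y_0y_1\cdots y_{q-1}$; if $p=1$ there is nothing to prove, so assume $p\geq 2$. I claim that any single trail edge $e\colon x_a\to y_b$ can be replaced by a trail edge $x_{a-1}\to y_{b-1}$ (indices taken modulo $p$, resp.\ modulo $q$), leaving the rest of the graph unchanged up to relabelling, by means of one out-split and one in-amalgamation. Out-split at $x_a$ with the partition $\{\,s^{-1}(x_a)\setminus\{e\},\ \{e\}\,\}$: this produces a cycle vertex — carrying the cycle edge and every other trail edge formerly leaving $x_a$ — together with a new interior vertex $u'$ whose only incoming edge comes from $x_{a-1}$ and whose only outgoing edge is a copy of $e$ pointing to $y_b$. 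Since $u'$ and $y_{b-1}$ both have out-neighbourhood $\{y_b\}$, they can be in-amalgamated; the merged vertex then sits on $C_1^E$ in the position of $y_{b-1}$ and receives a trail edge from $x_{a-1}$, as claimed. Iterating this move $a$ times brings the source of $e$ to $x_0$, and doing so for each trail edge in turn puts $E$ into normal form.

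The conceptual idea — absorb interior vertices forward into the source cycle, then slide trail edges backwards around both cycles at once — is short, but the bulk of the work is in justifying the two elementary moves used above: one must verify that the proposed mergers really are out-/in-amalgamations (the merged vertices do share in-, resp.\ out-neighbourhoods counted with multiplicity, and the merged graph, re-split along the appropriate partition, recovers the previous graph) and that each move keeps the graph essential and connected and preserves the two cycles and their lengths. Some extra care is needed in the degenerate cases $p=1$ or $q=1$, where a cycle is a loop and the split or amalgamation involves a loop edge; these can be checked directly, using in particular that a quasi-normal meteor graph with $p=1$ is already in normal form.
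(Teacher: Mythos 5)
Your proof is correct, and its first half takes a genuinely different route from the paper's. The paper first makes the interiors of distinct trails pairwise disjoint (out-splitting at interior ``multisource'' vertices, working from the sink cycle back towards the source cycle, and then in-splitting so that each interior vertex is the range of a single edge), and only afterwards moves all trail sources to one vertex and shortens the trails to length one via in-amalgamations at the sink cycle. You instead eliminate interior vertices outright, one at a time: in-split a minimal interior vertex into singleton classes and out-amalgamate each family of copies into the successor, on $C_0^E$, of the source-cycle vertex feeding them. This reaches quasi-normal form without ever passing through the ``disjoint trails'' intermediate graph, at the cost of having to check directly that the proposed mergers are legitimate out-amalgamations; they are, precisely because the source cycle of a meteor graph has no entrances other than its cycle edges, so all the vertices being merged have in-neighbourhood $\{x\}$ with multiplicity one, and re-splitting the merged vertex along the partition of its out-edges by provenance recovers the previous graph. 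Your second stage --- the out-split at $x_a$ followed by an in-amalgamation at $y_{b-1}$ that slides a length-one trail edge from $x_a\to y_b$ to $x_{a-1}\to y_{b-1}$ --- uses exactly the two elementary moves of the paper (the out-split is the move of Figure \ref{fig.insplit.sink.loop}), merely interleaved so that every intermediate graph stays quasi-normal instead of first lengthening trails and shortening them at the end. Both routes are valid; yours gives a cleaner induction and keeps trail lengths constant, while the paper's keeps the trails and their lengths explicitly visible, which it then reuses when analysing the $\approx$-relation. The degenerate cases $p=1$ and $q=1$ that you flag do go through: for $p=1$ the second stage is vacuous, and for $q=1$ the in-amalgamation merges a vertex into the base of a loop, which re-splits correctly.
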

\begin{proof}
   % Let us call the trails except for the cycles the ``interior'' of the trails.
   
    Consider an arbitrary meteor graph \(E\). As a first step, let us transform \(E\) into another meteor graph -- using the previously mentioned graph moves  -- into another meteor graph where the interiors of distinct trails do not intersect.
    
    %For a meteor graph not to be in normal form means that some vertices in the interior of the trails are sources or ranges of more than one edge, or that %different trails have different sources or ranges. 
    %some trails are longer than one edge.

    Assume that there are distinct trails of \(E\) whose interiors intersect. That is, there is some vertex \(u\) in the interior of a trail which is the source or range of more than one edge. Assuming the prior case, let us temporarily call such a vertex a ``multisource vertex''. Performing an out-split at \(u\) and separating the edges that come out of it, we obtain new vertices \(u_1,u_2,\ldots\) which are sources of only one edge each. See Figure \ref{fig.multisource.vertex}.
    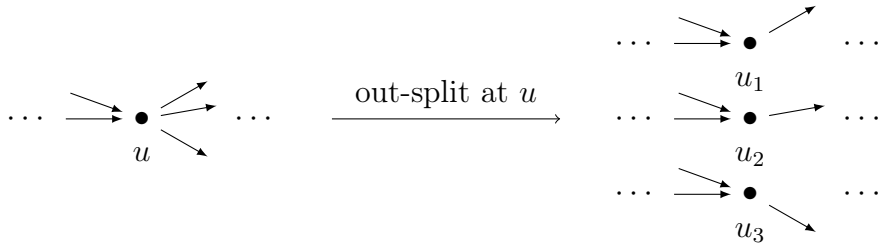
\begin{figure}
        \centering
        \begin{tikzpicture}
     \node [label={[label distance=0cm]270:$u$}] (u) at (0,0) {$\bullet$};
     \draw[-latex] (u) to ([shift=({(30:1)})]u);
     \draw[-latex] (u) to ([shift=({(10:1)})]u);
     \draw[-latex] (u) to ([shift=({(-30:1)})]u);
     \draw[-latex] ([shift=({160:1})]u.center) to (u);
     \draw[-latex] ([shift=({180:1})]u.center) to (u);
     \node at ([shift=(0:1.5)]u.center) {$\cdots$};
     \node at ([shift=(180:1.5)]u.center) {$\cdots$};

     \node (center2) at (8,0) {};
     %%%
     \draw[->] (2.5,0) to node[midway,above] {out-split at $u$} ([shift={(-2.5,0)}]center2) ;
     \node [label={[label distance=0cm]270:$u_1$}] (u1) at ([shift={(0,1)}]center2) {$\bullet$};
     \node [label={[label distance=0cm]270:$u_2$}] (u2) at (center2) {$\bullet$};
     \node [label={[label distance=0cm]270:$u_3$}] (u3) at ([shift={(0,-1)}]center2) {$\bullet$};
     \draw[-latex] (u1) to ([shift=({(30:1)})]u1);
     \draw[-latex] (u2) to ([shift=({(10:1)})]u2);
     \draw[-latex] (u3) to ([shift=({(-30:1)})]u3);
     \foreach \i in {1,2,3} {
         \draw[-latex] ([shift=({160:1})]u\i.center) to (u\i);
         \draw[-latex] ([shift=({180:1})]u\i.center) to (u\i);
         \node at ([shift=(0:1.5)]u\i.center) {$\cdots$};
         \node at ([shift=(180:1.5)]u\i.center) {$\cdots$};
     }
     \end{tikzpicture}
        \caption{Out-split at a multisource}
        \label{fig.multisource.vertex}
    \end{figure}
    
    The trails in this new graph have the same length as the trails in the original graph. The multisource vertices in this new graph are the same ones as in the original graph, except for \(u\), plus all vertices in \(s(r^{-1}(u))\) are now multisource vertices. %, as well as all vertices in \(s(r^{-1}(u))\), which are closer to the source cycle than \(u\).
    Thus, performing out-splits at all multi-sources, starting at the ones closest to the sink cycle and moving to the ones closer to the source cycle, gets rid of all of them. In this manner, we may transform our original graph into one in which all vertices in the interior of the trails are only sources of one edge each.
    
    We then perform in-splits in the interior of a trail (using a partition with exactly one edge in each set), and further transform the graph into one such that each vertex in the interior of the trails is the range of exactly one edge. In this manner, the resulting graph's trails have non-intersecting interiors. See Figure \ref{fig.removing.intersections} for an example.
    
    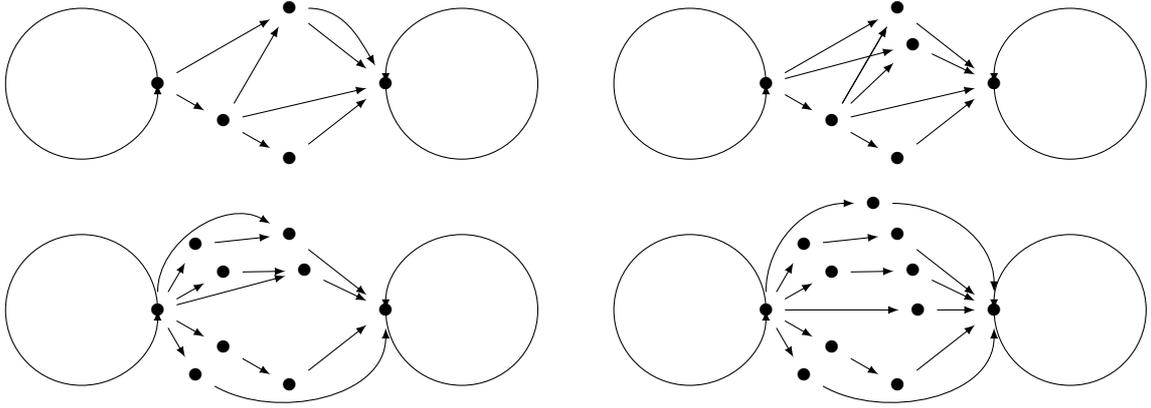
\begin{figure}[!ht]\centering
    \begin{tikzpicture}
        \node (center) at (0,0) {};
        \node (a) at ([shift=(0:1)]center) {\(\bullet\)};
        \node (u) at ([shift={(-30:1)}]a) {$\bullet$};
        \node (v1) at ([shift={(30:2)}]a) {$\bullet$};
        \node (v2) at ([shift={(-30:2)}]a) {$\bullet$};
        \node (b) at ([shift={(0:3)}]a) {$\bullet$};

        \draw[-latex] (a) arc (0:360:1);
        \draw[-latex] (b) arc (-180:180:1);
        
        \draw[-latex] (a) to (u);
        \draw[-latex] (a) to (v1);
        \draw[-latex] (u) to (v1);
        \draw[-latex] (u) to (v2);
        \draw[-latex] (u) to (b);
        \draw[-latex] (v1) to (b);
        \draw[-latex] (v1) to [out=0,in=120] (b);
        \draw[-latex] (v2) to (b);
        
        %%%%%
        \node (center) at (8,0) {};
        
        \node (a) at ([shift=(0:1)]center) {\(\bullet\)};
        \node (u) at ([shift={(-30:1)}]a) {$\bullet$};
        \node (v11) at ([shift={(30:2)}]a) {$\bullet$};
        \node (v12) at ([shift={(15:2)}]a) {$\bullet$};
        \node (v2) at ([shift={(-30:2)}]a) {$\bullet$};
        \node (b) at ([shift={(0:3)}]a) {$\bullet$};

        \draw[-latex] (a) arc (0:360:1);
        \draw[-latex] (b) arc (-180:180:1);
        
        \draw[-latex] (a) to (u);
        \draw[-latex] (a) to (v11);
        \draw[-latex] (a) to (v12);
        \draw[-latex] (u) to (v11);
        \draw[-latex] (u) to (v11);
        \draw[-latex] (u) to (v12);
        \draw[-latex] (u) to (v2);
        \draw[-latex] (u) to (b);
        \draw[-latex] (v11) to (b);
        \draw[-latex] (v12) to (b);
        \draw[-latex] (v2) to (b);
        %%%%%
        
        %%%%%
        \node (center) at (0,-3) {};
        
        \node (a) at ([shift=(0:1)]center) {\(\bullet\)};
        \node (u1) at ([shift={(60:1)}]a) {$\bullet$};
        \node (u2) at ([shift={(30:1)}]a) {$\bullet$};
        \node (u3) at ([shift={(-30:1)}]a) {$\bullet$};
        \node (u4) at ([shift={(-60:1)}]a) {$\bullet$};
        \node (v11) at ([shift={(30:2)}]a) {$\bullet$};
        \node (v12) at ([shift={(15:2)}]a) {$\bullet$};
        \node (v2) at ([shift={(-30:2)}]a) {$\bullet$};
        \node (b) at ([shift={(0:3)}]a) {$\bullet$};

        \draw[-latex] (a) arc (0:360:1);
        \draw[-latex] (b) arc (-180:180:1);
        
        \draw[-latex] (a) to (u1);
        \draw[-latex] (a) to (u2);
        \draw[-latex] (a) to (u3);
        \draw[-latex] (a) to (u4);
        \draw[-latex] (a) to [out=90,in=150] (v11);
        \draw[-latex] (a) to (v12);
        \draw[-latex] (u1) to (v11);
        \draw[-latex] (u2) to (v12);
        \draw[-latex] (u3) to (v2);
        \draw[-latex] (u4) to [out=-30,in=-90] (b);
        \draw[-latex] (v11) to (b);
        \draw[-latex] (v12) to (b);
        \draw[-latex] (v2) to (b);
        %%%%%
        \node (center) at (8,-3) {};
        
        \node (a) at ([shift=(0:1)]center) {\(\bullet\)};
        \node (u1) at ([shift={(60:1)}]a) {$\bullet$};
        \node (u2) at ([shift={(30:1)}]a) {$\bullet$};
        \node (u3) at ([shift={(-30:1)}]a) {$\bullet$};
        \node (u4) at ([shift={(-60:1)}]a) {$\bullet$};
        \node (v111) at ([shift={(45:2)}]a) {$\bullet$};
        \node (v112) at ([shift={(30:2)}]a) {$\bullet$};
        \node (v121) at ([shift={(15:2)}]a) {$\bullet$};
        \node (v122) at ([shift={(0:2)}]a) {$\bullet$};
        \node (v2) at ([shift={(-30:2)}]a) {$\bullet$};
        \node (b) at ([shift={(0:3)}]a) {$\bullet$};

        \draw[-latex] (a) arc (0:360:1);
        \draw[-latex] (b) arc (-180:180:1);
        
        \draw[-latex] (a) to (u1);
        \draw[-latex] (a) to (u2);
        \draw[-latex] (a) to (u3);
        \draw[-latex] (a) to (u4);
        \draw[-latex] (a) to [out=90,in=180] (v111);
        \draw[-latex] (a) to (v122);
        \draw[-latex] (u1) to (v112);
        \draw[-latex] (u2) to (v121);
        \draw[-latex] (u3) to (v2);
        \draw[-latex] (u4) to [out=-30,in=-90] (b);
        \draw[-latex] (v111) to [out=0,in=90](b);
        \draw[-latex] (v112) to (b);
        \draw[-latex] (v121) to (b);
        \draw[-latex] (v122) to (b);
        \draw[-latex] (v2) to (b);

        %\draw [->] ([shift={+(3,0)}]center.center)--([shift={(-1.5,0)}]center2.center) node[midway,above] {outsplit at $v$};
        %%%%%
        
    \end{tikzpicture}\caption{Out-splits in the interior vertices, starting from those closest to the sink cycle and moving left, followed by in-splits in the opposite direction. In simple terms, this procedure turns each original trail into a new trail with the same source, range, and length, but the new trails have pairwise disjoint interiors.}\label{fig.removing.intersections}\end{figure}%  (Notice that insplitting will not create multisource or multisink vertices.)
    
    Now, let $v$ be a  vertex in the source cycle which is the source of a trail.
    Performing the out-split which separates precisely the source-most edge in the trail from \(s^{-1}(v)\) is the same as %adding a delay {\color{red} should we make precise what a delay is?} on that edge and 
    moving the trail's source back one step along the source cycle, and lengthening the trail by one edge.  See Figure \ref{fig.insplit.sink.loop} below.
    
    \begin{figure}[!ht]\centering
    \begin{tikzpicture}
        \node (center) at (0,0) {};
        \node (u) at ([shift=(-45:1)]center) {\(\bullet\)};
        \node (v) at ([shift=(0:1)]center) {\(\bullet\)};
        \node (w) at ([shift=(45:1)]center) {\(\bullet\)};

        \draw[-latex] (0,0) ++(-75:1) arc (-75:-45:1);
        \draw[-latex] (u) arc (-45:0:1) node[left] {$v$};
        \draw[-latex] (v) arc (0:45:1);
        \draw[-latex] (w) arc (45:75:1);
        \draw[-latex,dashed] (u) to ([shift=(-45:1)]u.center) node[right] {$\cdots$};
        \draw[-latex] (v) to ([shift=(-30:1)]v.center) node[right] {$\cdots$};
        \draw[-latex,dashed] (v) to ([shift=(30:1)]v.center) node[right] {$\cdots$};
        \draw[-latex,dashed] (w) to ([shift=(45:1)]w.center) node[right] {$\cdots$};
        
        \draw[dotted] (center) ++ (80:1) arc (80:280:1);
        
        %%%%%
        \node (center2) at (7,0) {};
        %%%%%
        \draw [->] ([shift={+(3,0)}]center.center)--([shift={(-1.5,0)}]center2.center) node[midway,above] {out-split at $v$};
        %%%%%
        
        \node (uu) at ([shift=(-45:1)]center2) {\(\bullet\)};
        \node (v1) at ([shift=(0:1)]center2) {\(\bullet\)};
        \node (ww) at ([shift=(45:1)]center2) {\(\bullet\)};
        \node (v2) at ([shift=(0:2)]center2) {\(\bullet\)};
        
        \draw[-latex] (center2) ++(-75:1) arc (-75:-45:1);
        \draw[-latex] (uu) arc (-45:0:1) node[left] {$v_1$};
        \draw[-latex] (uu) to (v2) node[above right] {$v_2$};
        \draw[-latex] (v1) arc (0:45:1);
        \draw[-latex] (ww) arc (45:75:1);
        
        \draw[-latex,dashed] (uu) to ([shift=(-45:1)]uu.center) node[right] {$\cdots$};
        \draw[-latex] (v2) to ([shift=(-30:1)]v2.center) node[right] {$\cdots$};
        \draw[-latex,dashed] (v1) to ([shift=(30:1)]v1.center) node[right] {$\cdots$};
        \draw[-latex,dashed] (ww) to ([shift=(45:1)]ww.center) node[right] {$\cdots$};
        
        \draw[dotted] (center2) ++ (80:1) arc (80:280:1);
    \end{tikzpicture}\caption{Out-split at a vertex in the source cycle. The dashed arrows represent trails that may or may not exist, but nevertheless are not modified by the graph move.}\label{fig.insplit.sink.loop}\end{figure}
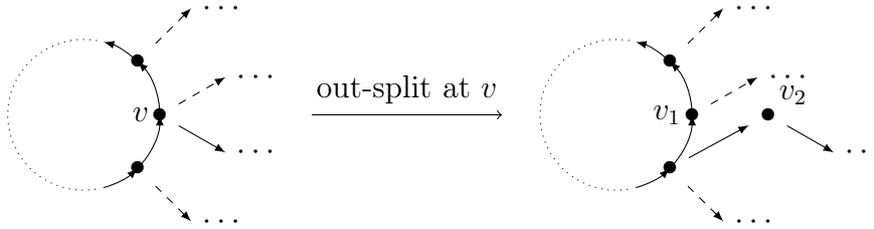

    In this manner, one can move the sources of trails along the source cycle, at the cost of lengthening the trails. % and one can do this in a manner that, in the final graph thus obtained, all trails have the same range.
     Of course, one can also do the opposite process (via out-amalgamations) and shorten a trail.
     
    Thus, we can arrange the trails so that every trail is sourced at the same vertex $v_E$.
   
    A similar procedure can also be done on the sink cycle (via in-splits). Thus, by in-amalgamations at the sink cycle, we can shorten the trails at the cost of moving their ranges back along the sink cycle, with the result being that all trails have only one edge. %, and also add or remove \(p\) delays to any trail.
    
    In short: Starting with a meteor graph \(E\), we can perform in- and out-splits in its trails and obtain another meteor graph whose trails do not intersect. Then, we can perform out-splits on the vertices of the source cycle, and make all trails have the same source. Performing in amalgamations on the vertices of the sink cycle, we make all trails have length \(1\). %This transforms \(E\) into a meteor graph in normal form. 
 %   We can also remove either \(p\) or \(q\) delays from any trail of a meteor graph via out- and in-amalgamations on the cycles, respectively, without changing the range or source of any of the trails.
 %   To ensure that all trails have length 1, we can in-amalgamate at the range of a trail and remove a delay from the trail, at the cost of moving its range backward along the sink cycle. Iterating this procedure, we obtain the normal form of $E$.
  %  This yields (a). As for (b), simply take the common range \(w\) of all blades of the given meteor graph in normal form and perform an in-split, separating \(r^{-1}(w)\) into singletons. The resulting graph is the same as the one obtained by adding one delay to each blade.\qedhere
\end{proof}

The following result is a consequence of the proof above.
\begin{corollary}
\label{cor:splits-preserve-meteor}
    If $E$ is a meteor graph, then in- or out-splitting $E$ will preserve the lengths of the cycles of $E$.  In particular, in- and out-splitting and -amalgamation take meteor graphs to meteor graphs.
\end{corollary}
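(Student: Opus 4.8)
The plan is to show that a single in- or out-split of a meteor graph $E$ is again a meteor graph with the lengths of its two cycles unchanged; the statements for amalgamations then follow by running the moves in reverse. First I would reduce to the in-split case: out-splitting $E$ means transposing an in-split of $E^*$, and the transpose of a meteor graph is again a meteor graph (with the roles of the source and sink cycles interchanged), so it suffices to treat in-splits. I would also record at the outset the routine facts that an in-split of an essential graph is essential (non-empty partition blocks prevent new sources, duplicated out-edges prevent new sinks) and that in-splitting preserves connectedness, so the only real task is to locate the cycles of an in-split $F$ of $E$.

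The central point --- already visible in the proof of Proposition \ref{prop:convert-to-normal} --- is a dichotomy among the vertices of a meteor graph. No edge of $E$ enters $C_0^E$ and no edge leaves $C_1^E$, because the connecting paths all run from the source cycle to the sink cycle; hence every vertex on $C_0^E$ has a unique incoming edge, is therefore not split, and $C_0^E$ appears unchanged inside $F$ as a cycle of length $p=|C_0^E|$. The vertices that may genuinely be split are the interior vertices of the trails and the vertices of $C_1^E$. For an interior trail vertex, the in-split only modifies that vertex and its incident edges, leaving the cycles untouched (this is precisely the manipulation behind Figures \ref{fig.multisource.vertex} and \ref{fig.removing.intersections}). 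For a vertex $w$ on $C_1^E$ I would argue as in Figure \ref{fig.insplit.sink.loop} transposed to the sink cycle: writing $C_1^E = z_0 z_1 \cdots z_{q-1}$ with cycle edges $c_k\colon z_k \to z_{k+1}$ and singling out, at each $z_k$, the partition block containing the incoming cycle edge $c_{k-1}$, the distinguished copies of $z_0,\dots,z_{q-1}$ form a cycle of $F$ of length $q=|C_1^E|$, while the remaining copies receive only trail-ending edges. Combining these observations, the cycles of $F$ are exactly $C_0^E$ and the re-routed copy of $C_1^E$, which are disjoint and of lengths $p$ and $q$, and every other vertex of $F$ lies on a trail between them; hence $F$ is a meteor graph with the same cycle lengths.

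For the amalgamation assertions, suppose $F$ is an in- or out-amalgam of a meteor graph $E$, so that by definition $E$ is an in- or out-split of $F$. Running the essentiality argument backwards (an in-split of a graph possessing a source, respectively a sink, still possesses one), $F$ is essential, and it is connected since $E$ is. By Williams' Theorem \ref{willimove}, $X_E$ and $X_F$ are conjugate, so $A_E$ and $A_F$ are strongly shift equivalent and in particular shift equivalent; Proposition \ref{goldenprop} then forces $F$ to be a meteor graph. Applying the first part of the argument to the in- or out-split $E$ of the meteor graph $F$ shows that the two graphs have cycles of the same lengths.

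The step I expect to require the most care is verifying that no \emph{new} cycle is created when several vertices of $C_1^E$ are split simultaneously: one must check that the distinguished copies of the $z_k$ are the only vertices of $F$ lying on a cycle, equivalently that the amalgamation map $F\to E$ carries every cycle of $F$ through such a copy to the cycle $C_1^E$ and would carry any putative cycle through a leftover copy to a (nonexistent) cycle of $E$ through a trail vertex. Once this bookkeeping is in place, the remainder is a direct transcription of the graph moves already analyzed in the proof of Proposition \ref{prop:convert-to-normal}.
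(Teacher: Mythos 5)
Your argument for the splitting half is correct and is essentially the paper's own: interior trail vertices can be split without touching the cycles; vertices on the source cycle have $|r^{-1}(v)|=1$ (so admit only trivial in-splits) and dually for out-splits on the sink cycle; and an in-split at a sink-cycle vertex just re-routes the cycle through the distinguished copies determined by the partition blocks containing the incoming cycle edges, preserving its length. (Your phrase ``no edge of $E$ enters $C_0^E$'' should read ``no edge from outside $C_0^E$ enters $C_0^E$,'' but the conclusion you draw from it is the right one.) The care you take to rule out new cycles through the leftover copies -- every copy of $z_k$ has a unique outgoing edge, landing on the distinguished copy of $z_{k+1}$ -- is a point the paper passes over with the bare assertion that the in-split graph has a unique sink cycle, so this is a worthwhile addition rather than a detour. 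Where you genuinely diverge is in the amalgamation half: the paper implicitly treats amalgams as inverse moves and lets the split analysis carry the claim, whereas you first establish that the amalgam $F$ is essential, invoke Williams' Theorem \ref{willimove} to get shift equivalence of $E$ and $F$, and then apply Proposition \ref{goldenprop} to conclude $F$ is a meteor graph before re-running the split analysis on $E$ as a split of $F$. This is logically sound (Proposition \ref{goldenprop} precedes the corollary and does not depend on it), and it cleanly sidesteps the mildly fiddly direct verification that the images of $C_0^E$ and $C_1^E$ in the amalgam are simple and disjoint; the cost is that it routes an elementary combinatorial fact through Krieger's dimension group and the $K$-theoretic machinery of Corollary \ref{h99}, which is much heavier than the statement requires.
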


{\begin{proof}
  In- or out-splitting at a vertex in the interior of a trail does not affect the source or range of any edge on the cycles of $E$.  Furthermore, one cannot in-split non-trivially at a vertex on the source cycle, nor out-split non-trivially at a vertex on the sink cycle.  Thus, suppose we have in-split at a vertex $v$ on the sink cycle.  Without loss of generality, assume  $\mathscr E_1$ contains the single edge $e$ from $C_1^E \cap r^{-1}(v)$.  There is a unique sink cycle in the in-split graph, resulting from replacing $e$ in $C_1^E$  with $e_1$ and $v = r(e)$ with $v_1$.  An analogous argument shows that out-splitting at a vertex $v \in C^E_0$ transforms $C^E_0$ into exactly one new cycle, which has the same length as $C^E_0$.
\end{proof}}

\begin{corollary}
\label{cor:normal-form-rep}
For any meteor graph $E$, there is a meteor graph $N_E$ in normal form with $T_E \cong T_{N_E}$.
\end{corollary}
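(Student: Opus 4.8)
The plan is to obtain this as an immediate consequence of Proposition \ref{prop:convert-to-normal}, once we observe that in- and out-splitting, together with their inverses, preserve the talented monoid up to $\mathbb{Z}$-isomorphism. Proposition \ref{prop:convert-to-normal} produces a finite chain of meteor graphs
\[
E = E_0,\ E_1,\ \ldots,\ E_k = N_E,
\]
with $N_E$ in normal form and each $E_{i+1}$ obtained from $E_i$ by a single in-split, out-split, in-amalgamation or out-amalgamation. Thus it is enough to prove $T_{E_i}\cong T_{E_{i+1}}$ as $\mathbb{Z}$-monoids for every $i$ and then compose the resulting isomorphisms.

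For a single move we first reduce to the splitting case: if $E_{i+1}$ is an amalgam of $E_i$, this means precisely that $E_i$ is a split of $E_{i+1}$, and the isomorphism produced below is symmetric in the two graphs. So assume $E_{i+1}$ is an in-split or out-split of $E_i$. By Williams' Theorem \ref{willimove}(2) the edge shifts $X_{E_i}$ and $X_{E_{i+1}}$ are conjugate; by Theorem \ref{willimove}(1) their adjacency matrices are strongly shift equivalent, hence shift equivalent. Corollary \ref{h99} then yields an order-preserving $\mathbb{Z}[x,x^{-1}]$-module isomorphism $K_0^{\gr}(L(E_i))\cong K_0^{\gr}(L(E_{i+1}))$, and restricting to positive cones gives a $\mathbb{Z}$-monoid isomorphism $T_{E_i}\cong T_{E_{i+1}}$, exactly as in the proof of Proposition \ref{goldenprop} (using that the positive cone of $K_0^{\gr}(L(E_i))$ is $\mathbb{Z}$-isomorphic to $T_{E_i}$).

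Composing these isomorphisms for $i=0,\dots,k-1$ produces the desired $\mathbb{Z}$-monoid isomorphism $T_E\cong T_{N_E}$, and $N_E$ is a meteor graph in normal form by construction. I do not anticipate a genuine obstacle here: the substantive work has already been carried out in Proposition \ref{prop:convert-to-normal}, and the invariance of the talented monoid under the four graph moves is a formal consequence of the symbolic-dynamics facts recalled in Section \ref{dynref}. The only mild point requiring care is the bookkeeping for the amalgamation moves, which is handled by the observation that an amalgamation is literally the inverse of a split, so the isomorphism constructed for splits applies verbatim.
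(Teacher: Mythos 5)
Your proposal is correct, but it takes a genuinely different route from the paper's. The paper disposes of the invariance step in one line by citing \cite[Theorems 4.4 and 4.7]{Luiz}, which establish directly that in- and out-splits of finite graphs without sinks induce $\mathbb{Z}$-isomorphisms of talented monoids; the corollary is then immediate from Proposition \ref{prop:convert-to-normal}. You instead route each elementary move through symbolic dynamics: a single split gives conjugacy of edge shifts and hence strong (so a fortiori plain) shift equivalence by Theorem \ref{willimove}, Corollary \ref{h99} converts this into an order-preserving $\mathbb{Z}[x,x^{-1}]$-module isomorphism of $K_0^{\gr}$, and restricting to positive cones (identified with the talented monoids via \cite{hazli}, exactly as in the proof of Proposition \ref{goldenprop}) yields $T_{E_i}\cong T_{E_{i+1}}$ as $\mathbb{Z}$-monoids. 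Each link is legitimately available: the intermediate graphs in the chain are meteor graphs (Corollary \ref{cor:splits-preserve-meteor}), hence essential, finite and sink-free as the cited results require, and no circularity arises since Williams' and Krieger's theorems are background rather than results of this paper. The trade-off is that your argument invokes considerably heavier machinery (Krieger's dimension group and its identification with $K_0^{\gr}$) to prove a statement that \cite{Luiz} gives by direct computation, and it produces only an abstract isomorphism with no control over the order unit --- harmless for this corollary, but precisely the extra information the direct citation supplies and which the paper exploits later, e.g.\ in Proposition \ref{prop:quasi-normal-form}.
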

\begin{proof}
    Since meteor graphs are finite and have no sinks, \cite[Theorems 4.4 and 4.7]{Luiz} implies that in-splits and out-splits (and hence in- and out-amalgamations) preserve the talented monoids. Thus the Corollary follows from Proposition \ref{prop:convert-to-normal}.
\end{proof}

The isomorphism of talented monoids induced by an in-splitting does not preserve the order unit.  However, we have the following result.

\begin{proposition}
\label{prop:quasi-normal-form}
    If $E$ is a meteor graph such that every vertex $u$ in the interior of a trail has $|r^{-1}(u)|=1$, then we can convert $E$ into a meteor graph $N_E$ in quasi-normal form using only out-splits and -amalgamations.  Consequently, $T_E \cong T_{N_E}$ via a graded isomorphism which preserves the order unit.
\end{proposition}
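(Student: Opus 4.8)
The plan is to shorten every trail of $E$ down to a single edge using nothing but out-amalgamations, by an argument dual to the one used in the proof of Proposition \ref{prop:convert-to-normal} to shorten trails at the sink cycle; the hypothesis $|r^{-1}(u)|=1$ for every interior vertex $u$ is exactly what makes these out-amalgamations available (without it one only has the in-amalgamations at the sink cycle, which spoil the order unit).

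Here is the basic move I would isolate. Suppose $E$ still has a trail of length at least two. Let $v_0\in (C_0^E)^0$ be its source, let $v_1\in (C_0^E)^0$ be the successor of $v_0$ along $C_0^E$, and let $u_1$ be the first vertex in the interior of this trail. Then $u_1$ is the range of a single edge $p_0\colon v_0\to u_1$ by hypothesis, while $v_1$ is the range of a single edge, the cycle edge $c_0\colon v_0\to v_1$, because $v_1$ lies on a source cycle. Thus $v_1$ and $u_1$ carry identical incoming data --- one incoming edge each, both issuing from $v_0$ --- so $\{v_1,u_1\}$ is, after relabelling, the pair of vertices obtained from a two-part out-splitting at a single vertex, and we may out-amalgamate it into a vertex $x$. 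The amalgam $x$ again has a single incoming edge (the merge of $c_0$ and $p_0$, still from $v_0$), so it lies on a cycle of the same length $p=|C_0^E|$ as $C_0^E$, in accordance with Corollary \ref{cor:splits-preserve-meteor}; every trail that ran through $u_1$ now issues from $x$ and is one edge shorter; no new interior vertex is created; and every interior vertex of the new graph still has in-degree one. A trail consisting of a single edge can never participate in this move, since the range of such a trail lies on $C_1^E$ and therefore receives at least two edges (the sink-cycle edge and the trail edge). As each move strictly decreases the finite number of interior vertices, iterating produces, after finitely many out-amalgamations, a meteor graph $N_E$ all of whose trails are single edges, i.e. $N_E$ is in quasi-normal form.

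For the statement about talented monoids: we have passed from $E$ to $N_E$ using only out-amalgamations, equivalently from $N_E$ to $E$ using only out-splits. By \cite{Luiz} each out-split induces a $\Z$-monoid isomorphism of talented monoids, and --- in contrast to in-splits (see the remark preceding this proposition) --- this isomorphism preserves the order unit: an out-split at a vertex $w$, which replaces $w$ by copies $w_1,\dots,w_m$, carries the class of $w$ to $w_1+\dots+w_m$ and fixes the remaining vertex classes, hence carries the order unit $\sum_v v$ of $T_{N_E}$ to the order unit $\sum_v v$ of $T_E$. Composing these isomorphisms (and their inverses) gives the desired order-unit-preserving graded isomorphism $T_E\cong T_{N_E}$.

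The step I expect to be most delicate is verifying that out-amalgamating $v_1$ and $u_1$ is a bona fide graph move in the sense of Definition \ref{def:outsplit}: one must check that the incoming edges at $v_1$ and at $u_1$ are matched coherently (immediate here, there being one edge on each side and both from $v_0$), that the two parts of the reconstructed partition of $s^{-1}(x)$ are non-empty (they are, since $v_1$ has the outgoing cycle edge and $u_1$, being interior in an essential graph, is not a sink), that $v_1$ and $u_1$ are distinct, and that everything still works in the degenerate case $p=1$, where $v_1=v_0$ and one amalgamates $v_0$ with the first interior vertex of one of its own trails --- together with the bookkeeping that the meteor structure and the in-degree-one hypothesis survive every step and that, when $v_0$ sources several trails, the corresponding out-amalgamations can be carried out one after another. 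A secondary point is to pin down the precise reference for the assertion that out-splits, unlike in-splits, induce order-unit-preserving isomorphisms of talented monoids.
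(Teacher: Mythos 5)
Your proof is correct and follows essentially the same route as the paper's: both shorten every trail to length one by out-amalgamating the first interior vertex of a trail with the cycle-successor of its source (the reverse of the out-split in Figure \ref{fig.insplit.sink.loop}), which is legal precisely because of the in-degree-one hypothesis, and both rest on the fact that out-splits, unlike in-splits, preserve the order unit of the talented monoid --- this is \cite[Theorem 4.7]{Luiz}, the reference you were looking for. The only cosmetic difference is that the paper first out-splits at multisource interior vertices to make the trail interiors disjoint before shortening, whereas you amalgamate directly and absorb the branching at $u_1$ into the bookkeeping of a single move.
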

\begin{proof}
    As in the first step in the proof of Proposition \ref{prop:convert-to-normal}, we can convert $E$ into a meteor graph in which $|s^{-1}(u)|=1 $ for all vertices $u$ lying in the interior of a trail.  Since $|r^{-1}(u)| = 1$ for all such vertices, by hypothesis, we conclude that the (interiors of the) trails of $E$ are disjoint.
    
    If a vertex $v$ on the source cycle $C^E_0$ of $E$ is the source of a trail, out-splitting at $v$ (via a partition which separates the first edge of the trail from any other out-going edges) moves the source of the trail one step backward around $C^E_0$, and lengthens the trail by one edge.  Consequently, we can use out-amalgamation to shorten all of the trails of $E$ until they have length 1, at the cost of allowing their sources to lie at any vertex on $C^E_0$.
    
    As out-splitting (and hence out-amalgamation) preserve the order unit of the talented monoid by \cite[Theorem 4.7]{Luiz}, the result follows.
\end{proof}

Fix a vertex $w_E$ in the sink cycle of a meteor graph $E$. By~\cite[Lemma~5.6]{hazli},  $w_E$ is a minimal periodic element. In fact we can say more. 

\begin{definition}\label{def:minimal-elt}
    A $\Gamma$-{\em minimum}  element in  a $\Z$-monoid $M$ is a non-zero element $y \in M$ such that for all nonzero $x \in M$, there exists $m \in \Z$ such that ${}^m x \geq y$.  
     \end{definition}

Clearly, if $y$ is $\Gamma$-minimum, then ${}^ny$, for any $n\in \mathbb Z$, is also $\Gamma$-minimum. It is possible a $\Gamma$-monoid $M$ has several $\Gamma$-minimum elements.  However if the minimum elements coincide up to their orbits, then we say $M$ has a unique $\Gamma$-minimum element. More formally, We say that $y$ is the {\em unique} $\Gamma$-minimum element of $M$ if  $y$ is $\Gamma$-minimum and for any  $\Gamma$-minimum element $x$ we have  $O(x)=O(y)$.

Fix a vertex $w_E$ in the sink cycle of a meteor graph $E$.
The following lemma establishes that $w_E$ is the unique minimal element of a  meteor graph.
\begin{lemma}
\label{lem:unique-min}
    Let $E$ be a meteor graph, and fix vertices \(v_E\in C^E_0\) and \(w_E\in C^E_1\). Then %for all nonzero $x \in T_E$, there exists $m \in \Z$ such that ${}^m x \geq w_E$.  Moreover, $w_E$ is the unique nonzero element with this property: that is, if $y \in T_E$ is another element such that $\forall x \in T_E, \exists m\in \Z $ with ${}^m x \geq y$, then $y = 0$ or $y={}^k w_E$ for some $k\in \Z$.
    $w_E$ is the unique minimum element of $T_E$, as in Definition \ref{def:minimal-elt}.
    %This  is to say that
    %\[\bigcap_{x\in T_E}({}^{\mathbb{Z}}x)^{\downarrow}=\left\{0\right\}\cup\left\{{}^mw_E\mid m\in\mathbb{Z}\right\}.\]
\end{lemma}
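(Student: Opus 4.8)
The plan is to use the structural description of $T_E$ provided by Theorem~\ref{ghostagain} together with Proposition~\ref{unipresen}. Recall that $w_E$ lies on the sink cycle $C_1^E$, which has no exit, so that ${}^q w_E = w_E$ where $q = |C_1^E|$; in particular $w_E$ is a nonzero periodic element. To verify that $w_E$ is $\Gamma$-minimum in the sense of Definition~\ref{def:minimal-elt}, I must show that for every nonzero $x \in T_E$ there is some $m \in \mathbb Z$ with ${}^m x \geq w_E$.

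First I would reduce to the case of vertices. By Theorem~\ref{leafandarc}(2), any nonzero $x \in T_E$ can be written as a sum of shifts of vertices of $R(x)$, so $x \geq {}^n u$ for some vertex $u$ and some $n \in \mathbb Z$; thus it suffices to show that for every vertex $u \in E^0$ there is an integer $k$ with ${}^k u \geq w_E$, and then take $m = k - n$. Now fix $u \in E^0$. Since $E$ is a meteor graph with $C_0^E \Rightarrow C_1^E$ and $E$ is connected with no sinks, every vertex of $E$ can reach the sink cycle $C_1^E$: indeed, $u$ lies either on $C_0^E$, on a trail, or on $C_1^E$, and in each case there is a path from $u$ to some vertex of $C_1^E$. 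Pushing this along, there is a path $\gamma$ from $u$ to $w_E$ (possibly of length zero if $u = w_E$), which at the level of the talented monoid gives a relation exhibiting $w_E(|\gamma|)$ — equivalently ${}^{|\gamma|} w_E$ — as one of the summands obtained by letting $u$ flow. More precisely, applying the defining relations of $T_E$ repeatedly along $\gamma$ yields ${}^{|\gamma|} u = \sum_j z_j$ in $T_E$ with ${}^{0} w_E$ appearing among the $z_j$ after an appropriate shift; concretely, since ${}^q w_E = w_E$ we may choose the shift so that $u \geq {}^{-|\gamma|} w_E$, i.e.\ ${}^{|\gamma|} u \geq w_E$. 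Setting $k = |\gamma| - n$ after the earlier reduction gives ${}^k x \geq w_E$.

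For uniqueness, suppose $y$ is another $\Gamma$-minimum element of $T_E$. By Theorem~\ref{ghostagain}, $y$ lies in $[v_E]$ or in $[w_E]$ (it cannot be $0$). If $y \in [v_E]$, then from the explicit description of $[v_E]$ in Theorem~\ref{ghostagain}, $y$ has a nonzero coefficient $l_i$ on some ${}^i v_E$; but the $\mathbb Z$-order ideal $\langle w_E \rangle$ is proper — since $T_E / \langle w_E \rangle \cong T_{C_0^E}$ is a nonzero $\mathbb Z$-cyclic monoid (as used in the proof of Proposition~\ref{unipresen}), and ${}^p v_E \geq v_E$ with strict failure of comparability forces $v_E \notin \langle w_E \rangle$ — so no shift of $w_E$ can dominate $y$, contradicting that $y$ is $\Gamma$-minimum (apply the minimality of $y$ to the nonzero element $w_E$). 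Hence $y \in [w_E]$, so by Proposition~\ref{unipresen} we may write $y = \sum_{0 \leq i < q} b_i w_E(i)$ with not all $b_i$ zero. Applying $\Gamma$-minimality of $y$ to $w_E$ gives ${}^m w_E \geq y$ for some $m$, and applying $\Gamma$-minimality of $w_E$ to $y$ gives ${}^{m'} y \geq w_E$ for some $m'$; since $T_E$ is cancellative and conical, comparing these in the cyclic monoid $T_{C_1^E}$ (into which $w_E$ and its shifts embed, the vertices of $C_1^E$ being hereditary) forces $y$ to be a single shift ${}^i w_E$, so $O(y) = O(w_E)$.

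The main obstacle I anticipate is the bookkeeping in the second paragraph: making precise, via the defining relations of $T_E$ and the Confluence Lemma, that a path from an arbitrary vertex $u$ to $w_E$ produces the inequality ${}^k u \geq w_E$ with the correct integer shift, and handling uniformly the three cases for where $u$ sits (source cycle, trail interior, sink cycle). This is essentially the content already packaged in Propositions~\ref{prop:flowpath} and~\ref{helpprop} and in Proposition~\ref{unipresen}, so the argument should go through cleanly, but care is needed to track the $\mathbb Z$-grading and to invoke ${}^q w_E = w_E$ so that the target shift can be normalized. A secondary point requiring attention is justifying the embeddings $T_{C_0^E} \hookrightarrow T_E/\langle w_E\rangle$ and $T_{C_1^E}\hookrightarrow T_E$ used for uniqueness; both are already established in the proof of Proposition~\ref{unipresen} and can simply be cited.
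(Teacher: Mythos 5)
Your proposal is correct and follows essentially the same route as the paper: existence of the dominating shift comes from flowing along a path from the source cycle (or any vertex) down to $w_E$, and uniqueness comes from the canonical decomposition of elements into shifts of $v_E$ and $w_E$ together with the rigidity of the cyclic monoid $T_{C_1^E}$. The only differences are organizational (you reduce to vertices via Theorem \ref{leafandarc}(2) and split the uniqueness argument by Archimedean classes, where the paper works directly with the presentation from Proposition \ref{unipresen}), and the sign bookkeeping you flag is harmless since ${}^{q}w_E=w_E$.
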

\begin{proof}
 Given a nonzero element $x \in T_E$, write $ x = \sum_i c_i {}^{r_i} v_E + \sum_j d_j {}^{s_j} w_E$ with $c_i, d_j\in \N$ (see Proposition~\ref{ghostagain}).  If some $d_j$ is nonzero, then $x \geq {}^{s_j} w_E$.  If $d_j  = 0$ for all $j$, then at least one $c_i$ is nonzero.  Let $n$ be the length of a path $P$ in $E$ with $s(P)=v_E$ and $r(P)=w_E$.  Then there exists $y\in T_E$ such that ${}^{r_i} v_E(n) = {}^{r_i } w_E + y$.  That is, ${}^{r_i} v_E \geq {}^{r_i-n} w_E $ and hence $x \geq {}^{r_i - n} w_E.$  Consequently, ${}^{m} x \geq {}^0 w_E,$, where $m=n-r_i$. % as claimed.
    
    To see that $w_E$ is unique, choose $y\not= {}^n w_E$.  That is, we have $y = \sum_i c_i {}^{r_i} v_E + \sum_j d_j {}^{s_j} w_E$ with either some $c_i$ nonzero, or $j \geq 2$.  If we have $j \geq 2$, then there is no $m \in \Z$ such that  ${}^m w_E  \geq y$, because $w_E$ lies only on one cycle.  If $c_i \not=0$ for some $i$, and there is $m\in \Z, x \in T_E$ such that ${}^m w_E = y + x,$  
    then we again get a contradiction, because any flow of ${}^{r_i} v_E$ will include ${}^kv_E$ for some $k$.
\end{proof}

We now introduce an equivalence relation on meteor graphs which will enable us to prove that shift equivalence of meteor graphs implies strong shift equivalence.

Let \(E\) be a meteor graph. We will write %Let us introduce some notation: The source and sink cycles of \(E\) will be denoted by \(C^E_0\) and \(C^E_1\), and their lengths by 
\(p^{E}\) and \(q^{E}\), respectively, for the lengths of the source and sink cycle of $E$. We denote
\[\p_E=\operatorname{gcd}(p^{E}, q^{E})\]
and call this number the \emph{period} of \(E\).  Corollary \ref{cor:splits-preserve-meteor} tells us that, in particular, in- and out-splits and -amalgamations preserve the period of a meteor graph.

% Note that meteor graphs are invariant under splits and amalgamations, as are the lengths of the source and sink cycles. From the proof of Theorem 4.3, we thus obtain:

% \begin{lemma}
%     The period of a meteor graph is invariant under in- and out-splits and amalgamations.
% \end{lemma}

Given vertices \(v^E\in C^E_0\) and \(w_E\in C^E_1\),  let us call the triple \((E,v^E,w^E)\) a \emph{pointed meteor graph}. Let us denote by $\mathbb{Z}/\p_E$ the ring of integers modulo \(\p_E\). Given \(n\in\mathbb{Z}/\p_E\), let \(N_E(n,v^E,w^E)\) be the number of trails \(\alpha\) of \(E\) for which
\[ |v^Es(\alpha)|+|\alpha|+|r(\alpha)w^E|=n\bmod \p_E,\label{eq.length_path_trail}\]
where \(|v^Es(\alpha)|\) is the length of the {minimal path} %\footnote{E: I don't think that's what we want, I think we need to consistently follow the direction of the cycle (either forward or backward).} 
%% OK, never mind, this is good.
from \(v^E\) to \(s(\alpha)\), and similarly for \(|r(\alpha)w^E|\). In other words, the left-hand-side of the equation above is simply the length of the minimal path from \(v^E\) to \(w^E\) which contains \(\alpha\).

For pointed meteor graphs \((E,v^E,w^E)\) and \((F,v^F,w^F)\) with source and sink cycles of respective equal length, and in particular of the same period \( \p:= \p_E = \p_F\), we define a relation
\begin{align}
     & q^{E} = q^{F},\quad p^{E}=p^{F}\text{ and there exists}\notag
        \\
      (E,v^E,w^E)\approx (F,v^F,w^F) \quad \iff \qquad &
            t\in\mathbb{Z}/\p\text{ such that for all }n\in\mathbb{Z}/\p\text{,} 
            \label{eq:condition-on-normal-form}\\
        &
           N_E(n+t,v^E,w^E)=N_F(n,v^F,w^F)\notag
\end{align}

\begin{example}
    Consider the pointed meteor graph $E$ below:
    
\begin{figure}[!ht]
    \centering
    \begin{tikzpicture}
        \node (center1) at (0,0) {};
        
        \node [label={[label distance=-0.3cm]180:$v^E$}] (a1) at ([shift=(0:1)]center1) {\(\bullet\)};
        \node (a2) at ([shift={(60:1)}]center1) {$\bullet$};
        \node (a3) at ([shift={(120:1)}]center1) {$\bullet$};
        \node (a4) at ([shift={(180:1)}]center1) {$\bullet$};
        \node (a5) at ([shift={(240:1)}]center1) {$\bullet$};
        \node (a6) at ([shift={(300:1)}]center1) {$\bullet$};

        \draw[-latex] (a1) arc (0:60:1);
        \draw[-latex] (a2) arc (60:120:1);
        \draw[-latex] (a3) arc (120:180:1);
        \draw[-latex] (a4) arc (180:240:1);
        \draw[-latex] (a5) arc (240:300:1);
        \draw[-latex] (a6) arc (300:360:1);
        
        \node (u) at ([shift={(-30:1)}]a1) {$\bullet$};
        \node (v1) at ([shift={(30:2)}]a1) {$\bullet$};
        \node (v2) at ([shift={(-30:2)}]a1) {$\bullet$};
        
        \node (center2) at ([shift={(0:5)}]center1) {};
        
        \node [label={[label distance=-0.3cm]0:$w^E$}] (b1) at ([shift={(180:1)}]center2) {$\bullet$};
        \node (b2) at ([shift={(270:1)}]center2) {$\bullet$};
        \node (b3) at ([shift={(0:1)}]center2) {$\bullet$};
        \node (b4) at ([shift={(90:1)}]center2) {$\bullet$};

        \draw[-latex] (b1) arc (-180:-90:1);
        \draw[-latex] (b2) arc (-90:0:1);
        \draw[-latex] (b3) arc (0:90:1);
        \draw[-latex] (b4) arc (90:180:1);
        
        \draw[-latex] (a1) to (u);
        \draw[-latex] (a2) to (v1);
        \draw[-latex] (u) to (v1);
        \draw[-latex] (u) to (v2);
        \draw[-latex] (u) to (b1);
        \draw[-latex] (v1) to (b4);
        \draw[-latex] (v1) to (b1);
        \draw[-latex] (v2) to (b1);
        \draw[-latex] (u) to [out=-60,in=210] (b2);
    \end{tikzpicture}
\end{figure}

In this case, the period of $E$ is $\p_E=\operatorname{gcd}(6,4)=2$, so $\mathbb{Z}/\!\p_E=\left\{0,1\right\}$.

To obtain a simpler meteor graph which is $\approx$-equivalent to $E$, we need to determine the numbers $N_E(0):=N_E(0,v^E,w^E)$ and $N_E(1):=N_E(1,v^E,w^E)$. For this, we need to connect $v^E$ to $w^E$ through simple paths along each possible trail $\alpha$ and check the length of this path (this will give us the left-hand side of Equation \eqref{eq.length_path_trail}), and then count how many of these lengths are even or odd (i.e., $=0$ or $1\mod p_E$, as in the right-hand side of Equation \eqref{eq.length_path_trail}).

To simplify this process, we can perform exactly the same procedure as in Figure \ref{fig.removing.intersections}, which ``separates'' the trails while preserving their sources, ranges, and lengths. (This argument is repeated in the proof of item (2) of Proposition \ref{prop.graphmovesmakeconditiongcd}). The resulting graph $E'$ is

\begin{figure}[!ht]
    \centering
    \begin{tikzpicture}
        \node (center1) at (0,0) {};
        
        \node [label={[label distance=-0.3cm]180:$v^{E'}$}] (a1) at ([shift=(0:1)]center1) {\(\bullet\)};
        \node (a2) at ([shift={(60:1)}]center1) {$\bullet$};
        \node (a3) at ([shift={(120:1)}]center1) {$\bullet$};
        \node (a4) at ([shift={(180:1)}]center1) {$\bullet$};
        \node (a5) at ([shift={(240:1)}]center1) {$\bullet$};
        \node (a6) at ([shift={(300:1)}]center1) {$\bullet$};

        \draw[-latex] (a1) arc (0:60:1);
        \draw[-latex] (a2) arc (60:120:1);
        \draw[-latex] (a3) arc (120:180:1);
        \draw[-latex] (a4) arc (180:240:1);
        \draw[-latex] (a5) arc (240:300:1);
        \draw[-latex] (a6) arc (300:360:1);
        
        \node (u) at ([shift={(-30:1)}]a1) {};
        \node (u1) at ([shift={(-90:0)}]u) {$\bullet$};
        \node (u2) at ([shift={(-90:0.3)}]u1) {$\bullet$};
        \node (u3) at ([shift={(-90:0.3)}]u2) {$\bullet$};
        \node (u4) at ([shift={(-90:0.3)}]u3) {$\bullet$};
        \node (u5) at ([shift={(-90:0.3)}]u4) {$\bullet$};
        \node (v1) at ([shift={(30:2)}]a1) {};
        \node (v11) at ([shift={(90:0.3)}]v1) {$\bullet$};
        \node (v12) at ([shift={(-90:0)}]v1) {$\bullet$};
        \node (v13) at ([shift={(-90:0.3)}]v1) {$\bullet$};
        \node (v14) at ([shift={(-90:0.3)}]v13) {$\bullet$};
        \node (v2) at ([shift={(-30:2)}]a1) {$\bullet$};
        
        \node (center2) at ([shift={(0:5)}]center1) {};
        
        \node [label={[label distance=-0.3cm]0:$w^{E'}$}] (b1) at ([shift={(180:1)}]center2) {$\bullet$};
        \node (b2) at ([shift={(270:1)}]center2) {$\bullet$};
        \node (b3) at ([shift={(0:1)}]center2) {$\bullet$};
        \node (b4) at ([shift={(90:1)}]center2) {$\bullet$};

        \draw[-latex] (b1) arc (-180:-90:1);
        \draw[-latex] (b2) arc (-90:0:1);
        \draw[-latex] (b3) arc (0:90:1);
        \draw[-latex] (b4) arc (90:180:1);
        
        \draw[-latex] (a1) to (u1);
        \draw[-latex] (a1) to (u2);
        \draw[-latex] (a1) to (u3);
        \draw[-latex] (a1) to (u4);
        \draw[-latex] (a1) to (u5);
        \draw[-latex] (a2) to (v11);
        \draw[-latex] (a2) to (v13);
        \draw[-latex] (u1) to [out=90,in=180] (v12);
        \draw[-latex] (u2) to [out=0,in=-90] (v14);
        \draw[-latex] (u3) to (b1);
        \draw[-latex] (u4) to (v2);
        \draw[-latex] (u5) to (b2);
        \draw[-latex] (v11) to (b4);
        \draw[-latex] (v12) to (b4);
        \draw[-latex] (v13) to (b1);
        \draw[-latex] (v14) to (b1);
        \draw[-latex] (v2) to (b1);
    \end{tikzpicture}
\end{figure}
and in this manner it is easier to count how many simple paths of each length connecting $v^{E'}$ and $w^{E'}$ there are:
\begin{itemize}
    \item length 2: 1
    \item length 3: 3
    \item length 4: 2
    \item length 5: 1
\end{itemize}
Therefore, $N_E(0)=3$ and $N_E(1)=4$.

Thus, to construct a simpler graph that is $\approx$-equivalent to $E$, we may start with the same sink and source cycles and simply add $3$ trails of even length and $4$ trails of odd length. For example, consider the graphs $F$ and $F'$ in Figure  6:

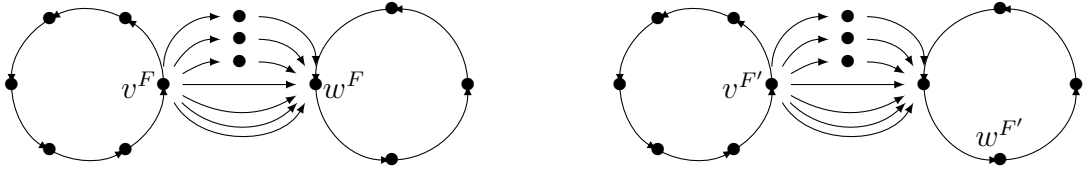
\begin{figure}[!ht]
%\label{fig:F-prime}
    \centering
    \begin{tikzpicture}
        \node (center1) at (0,0) {};
        
        \node [label={[label distance=-0.3cm]180:$v^{F}$}] (a1) at ([shift=(0:1)]center1) {\(\bullet\)};
        \node (a2) at ([shift={(60:1)}]center1) {$\bullet$};
        \node (a3) at ([shift={(120:1)}]center1) {$\bullet$};
        \node (a4) at ([shift={(180:1)}]center1) {$\bullet$};
        \node (a5) at ([shift={(240:1)}]center1) {$\bullet$};
        \node (a6) at ([shift={(300:1)}]center1) {$\bullet$};

        \draw[-latex] (a1) arc (0:60:1);
        \draw[-latex] (a2) arc (60:120:1);
        \draw[-latex] (a3) arc (120:180:1);
        \draw[-latex] (a4) arc (180:240:1);
        \draw[-latex] (a5) arc (240:300:1);
        \draw[-latex] (a6) arc (300:360:1);
        
        \node (u) at ([shift={(0:2)}]center1) {};
        \node (u1) at ([shift={(90:0.3)}]u) {$\bullet$};
        \node (u2) at ([shift={(90:0.3)}]u1) {$\bullet$};
        \node (u3) at ([shift={(90:0.3)}]u2) {$\bullet$};
        
        \node (center2) at ([shift={(0:4)}]center1) {};
        
        \node [label={[label distance=-0.3cm]0:$w^{F}$}] (b1) at ([shift={(180:1)}]center2) {$\bullet$};
        \node (b2) at ([shift={(270:1)}]center2) {$\bullet$};
        \node (b3) at ([shift={(0:1)}]center2) {$\bullet$};
        \node (b4) at ([shift={(90:1)}]center2) {$\bullet$};

        \draw[-latex] (b1) arc (-180:-90:1);
        \draw[-latex] (b2) arc (-90:0:1);
        \draw[-latex] (b3) arc (0:90:1);
        \draw[-latex] (b4) arc (90:180:1);

        \draw[-latex] (a1) to [out=30,in=180] (u1);
        \draw[-latex] (a1) to [out=60,in=180] (u2);
        \draw[-latex] (a1) to [out=90,in=180] (u3);
        \draw[-latex] (u1) to [out=0,in=150] (b1);
        \draw[-latex] (u2) to [out=0,in=120] (b1);
        \draw[-latex] (u3) to [out=0,in=90] (b1);
        \draw[-latex] (a1) to (b1);
        \draw[-latex] (a1) to [out=-30,in=210] (b1);
        \draw[-latex] (a1) to [out=-45,in=225] (b1);
        \draw[-latex] (a1) to [out=-60,in=240] (b1);

        %%%%%%%%%%%%%%%%%%%%

        \node (center1) at (8,0) {};
        
        \node [label={[label distance=-0.3cm]180:$v^{F'}$}] (a1) at ([shift=(0:1)]center1) {\(\bullet\)};
        \node (a2) at ([shift={(60:1)}]center1) {$\bullet$};
        \node (a3) at ([shift={(120:1)}]center1) {$\bullet$};
        \node (a4) at ([shift={(180:1)}]center1) {$\bullet$};
        \node (a5) at ([shift={(240:1)}]center1) {$\bullet$};
        \node (a6) at ([shift={(300:1)}]center1) {$\bullet$};

        \draw[-latex] (a1) arc (0:60:1);
        \draw[-latex] (a2) arc (60:120:1);
        \draw[-latex] (a3) arc (120:180:1);
        \draw[-latex] (a4) arc (180:240:1);
        \draw[-latex] (a5) arc (240:300:1);
        \draw[-latex] (a6) arc (300:360:1);
        
        \node (u) at ([shift={(0:2)}]center1) {};
        \node (u1) at ([shift={(90:0.3)}]u) {$\bullet$};
        \node (u2) at ([shift={(90:0.3)}]u1) {$\bullet$};
        \node (u3) at ([shift={(90:0.3)}]u2) {$\bullet$};
        
        \node (center2) at ([shift={(0:4)}]center1) {};
        
        \node (b1) at ([shift={(180:1)}]center2) {$\bullet$};
        \node (b2) [label={[label distance=-0.2cm]90:$w^{F'}$}] at ([shift={(270:1)}]center2) {$\bullet$};
        \node (b3) at ([shift={(0:1)}]center2) {$\bullet$};
        \node (b4) at ([shift={(90:1)}]center2) {$\bullet$};

        \draw[-latex] (b1) arc (-180:-90:1);
        \draw[-latex] (b2) arc (-90:0:1);
        \draw[-latex] (b3) arc (0:90:1);
        \draw[-latex] (b4) arc (90:180:1);

        \draw[-latex] (a1) to [out=30,in=180] (u1);
        \draw[-latex] (a1) to [out=60,in=180] (u2);
        \draw[-latex] (a1) to [out=90,in=180] (u3);
        \draw[-latex] (u1) to [out=0,in=150] (b1);
        \draw[-latex] (u2) to [out=0,in=120] (b1);
        \draw[-latex] (u3) to [out=0,in=90] (b1);
        \draw[-latex] (a1) to (b1);
        \draw[-latex] (a1) to [out=-30,in=210] (b1);
        \draw[-latex] (a1) to [out=-45,in=225] (b1);
        \draw[-latex] (a1) to [out=-60,in=240] (b1);
    \end{tikzpicture}
    \caption{The graph $F$ is on the left and $F'$ on the right.}
\end{figure}

By the previous paragraph, $F\approx E$. The graph $F'$ was created by moving the selected vertex $w^F$ one step along the sink cycle, which increases the length of all minimal paths connecting the selected vertices by $1$. So we also have $F'\approx F$, by taking $t=1$ in Equation \eqref{eq:condition-on-normal-form}.
\end{example}

\begin{proposition}\label{prop.graphmovesmakeconditiongcd}
    The relation \(\approx\) as above is an equivalence relation on the class of meteor graphs of period \(\p\). Moreover:
    \begin{enumerate}[label=(\arabic*)]
        \item Any two pointed meteor graphs \((E,v^E_1,w^E_1)\) and \((E,v^E_2,w^E_2)\) with the same underlying graph \(E\) belong to the same equivalence class under \(\approx\).
        \item If \(F\) is obtained from the meteor graph \(E\) by performing in-\ and out-splits and -{a\-mal\-ga\-ma\-tions}, then any pointed graphs with underlying graphs \(E\) and \(F\) belong to the same equivalence class under \(\approx\).
    \end{enumerate}
\end{proposition}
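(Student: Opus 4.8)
The plan is to establish the three assertions in turn; the real content is a careful bookkeeping of how the trails of a meteor graph, and the lengths of minimal paths between the chosen basepoints, behave under the graph moves. That $\approx$ is an equivalence relation on meteor graphs of period $\p$ is immediate from the definition \eqref{eq:condition-on-normal-form}: reflexivity holds with shift $t=0$; if $(E,v^E,w^E)\approx(F,v^F,w^F)$ with shift $t$, then reindexing $n\mapsto n-t$ yields $(F,v^F,w^F)\approx(E,v^E,w^E)$ with shift $-t$; and composing relations with shifts $t_1,t_2$ produces one with shift $t_1+t_2$, the equalities $p^E=p^F$ and $q^E=q^F$ being transparently transitive.

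For part (1) I would first record the elementary cycle arithmetic: on a directed cycle of length $\ell$, any two directed paths joining the same ordered pair of vertices have equal length modulo $\ell$, and advancing the starting vertex by $d$ steps along the cycle decreases, modulo $\ell$, the length of the minimal path to any fixed target by $d$ (dually for the target vertex). Applying this separately on $C^E_0$ and $C^E_1$ and then reducing modulo $\p$ (which divides both $p^E$ and $q^E$), the quantity $|v^E s(\alpha)|+|\alpha|+|r(\alpha)w^E|$ attached to each trail $\alpha$ changes by one and the same constant $t\bmod\p$ when the basepoints move from $(v^E_1,w^E_1)$ to $(v^E_2,w^E_2)$ --- namely $t$ is the sum modulo $\p$ of the forward displacement $v^E_1\to v^E_2$ along $C^E_0$ and the forward displacement $w^E_2\to w^E_1$ along $C^E_1$. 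Hence $N_E(n+t,v^E_1,w^E_1)=N_E(n,v^E_2,w^E_2)$ for all $n$, which is precisely $(E,v^E_1,w^E_1)\approx(E,v^E_2,w^E_2)$.

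For part (2), by the transitivity just established and by part (1) it suffices to show that a single in- or out-split $E\rightsquigarrow F$ makes some --- hence, invoking part (1) once more, every --- pointed version of $F$ equivalent under $\approx$ to $E$; amalgamations, being inverse to splits, are handled by the symmetry of $\approx$. By Corollary \ref{cor:splits-preserve-meteor} the cycle lengths, and so $p,q,\p$, are preserved, so only the counts $N$ must be compared. A non-trivial in-split of a meteor graph can occur only at a sink-cycle vertex or at an interior vertex of a trail, and a non-trivial out-split only at a source-cycle vertex or at an interior vertex of a trail, since source-cycle vertices have a unique incoming edge and sink-cycle vertices a unique outgoing edge. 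If the split is at an interior trail vertex $u$, the two cycles are untouched and the move merely redistributes the edges at $u$ among its copies $u_1,\dots,u_m$; one checks --- this is the trail-separation argument already used for Figure \ref{fig.removing.intersections} --- that the trails through $u$ are in a length-, source-, and range-preserving bijection with the trails through the $u_i$, while all other trails are unchanged, so $N_F=N_E$ identically and $t=0$ works. If the split is at a source-cycle vertex $v$, one copy $v_1$ remains on $C^E_0$ and every other copy $v_i$ becomes a new interior vertex one step back along $C^E_0$: a trail rerouted through such a $v_i$ gains one edge but has its source moved one step backward, so $|v^E s(\alpha)|+|\alpha|+|r(\alpha)w^E|$ is unchanged modulo $p^E$, hence modulo $\p$, and again $N_F=N_E$; the sink-cycle case is dual. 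Thus in every case $(F,v^F,w^F)\approx(E,v^E,w^E)$, and iterating along the given sequence of moves together with part (1) shows that all pointed versions of $E$ and of $F$ lie in a single $\approx$-class.

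The main obstacle is the combinatorics buried in part (2): one must verify that each type of split induces a genuine bijection between the trail sets of $E$ and $F$ that neither creates nor destroys directed paths from $C^E_0$ to $C^E_1$ --- this rests on the fact that a meteor graph has no cycle in the interior of a trail, which prevents a path in $F$ from visiting two distinct copies of a split vertex --- and that the accompanying path-length identities are read in the correct modulus. Everything else is formal.
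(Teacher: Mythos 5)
Your proposal is correct and follows essentially the same route as the paper's proof: the same case analysis of splits by location (interior of a trail, source cycle, sink cycle), the same observation that the quantity $|v^Es(\alpha)|+|\alpha|+|r(\alpha)w^E|$ is preserved modulo $p^E$ or $q^E$ (hence modulo $\p$) under each move, and the same reduction of amalgamations to splits via symmetry. The only cosmetic difference is that you handle the basepoint change in part (1) in a single step with general displacements, where the paper moves one vertex at a time and invokes transitivity.
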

\begin{proof}
A moment's thought reveals that $\approx$ is  transitive, symmetric, and reflexive.
    \begin{enumerate}[label=(\arabic*)]
        \item Suppose that \(v^E_2\) is the vertex right after \(v^E_1\) in \(C^E_0\) and that \(w^E_1=w^E_2\) (which we call simply \(w^E\)). To see that \((E,v^E_1,w^E)\approx (E,v^E_2,w^E)\), notice that, for a trail \(\alpha\) of \(E\):
        \begin{itemize}
            \item If \(s(\alpha)\neq v^E_1\), then the minimal path from \(v^E_2\) to \(w^E\) which passes through \(\alpha\) is the same as the one from \(v^E_1\) to \(w^E\), except the first edge.
            \item If \(s(\alpha)=v^E_1\), then to obtain the minimal path from \(v^E_2\) to \(w^E\) which passes through \(\alpha\) we have to concatenate the one from \(v^E_1\) to \(w^E\) with the path from \(v^E_2\) to \(v^E_1\) inside the cycle \(c^E_0\), and this adds \(p^E_0-1\) to its length, which \(\pmod{\p_E}\) is the same as subtracting \(1\) (as \(\p_E\) divides \(p^E\)).
        \end{itemize}
        In any case, we conclude that, for any \(n\in\mathbb{Z}/\p_E\),
        \(N_E(n,v^E_1,w^E)=N_E(n-1,v^E_2,w^E)\)
        and therefore \((E,v^E_1,w^E)\approx (E,v^E_2,w^E)\), as long as \(v^E_2\) is the point right after \(v^E_1\) in \(C_0^E\). Since \(\approx\) is  transitive and any vertex in \(C_0^E\) can be reached from any other one by moving along the cycle, we see that \((E,v^E_1,w^E)\approx (E,v^E_2,w^E)\) for any \(v^E_1,v^E_2\in C^E_0\).
        
        A similar argument allows us to change the vertex $w^E$ in \(C^E_1\) and preserve \(\approx\)-classes, which completes the proof of this item.
        
    \item Fix vertices \(v^E\) and \(w^E\) in the source and sink cycles of \(E\), respectively.
    
    If the graph \(F\) is obtained by performing an in-split at a vertex \(v\) of \(E\) which belongs to the interior of the trails, then each trail \(\alpha\) which passes through \(v\) will correspond to a single trail \(\alpha'\) in \(F\) of the same length, and the chosen points $v^E$ of \(C_0^E\) and $w^E$ of \(C_1^E\) will remain unmodified, as will all other trails:
    \[\begin{tikzpicture}
        %%%%RIGHT LOOP
        \node (centerright) at (0,0) {};
        \node (uright) at ([shift=(225:1)]centerright) {\(\bullet\)};
        \node (middleright) at ([shift=(180:1)]centerright) {\(\bullet\)};
        \node (wright) at ([shift=(135:1)]centerright) {\(\bullet\)};

        \draw[-latex] (centerright) ++(255:1) arc (255:225:1);
        \draw[-latex] (centerright) ++(225:1) arc (225:180:1);
        \draw[-latex] (centerright) ++(180:1) arc (180:135:1);
        \draw[-latex] (centerright) ++(135:1) arc (135:105:1);
        \draw[-latex,dashed] (centerright) ++(225:2) node[left] {$\cdots$} -- (uright);
        \draw[-latex,dashed] ([shift=(180:1)]middleright.center) node[left] {$\cdots$} -- (middleright);
        \draw[-latex,dashed] (centerright) ++(135:2) node[left] {$\cdots$} -- (wright);
        
        \draw[dotted] (centerright) ++ (100:1) arc (100:-100:1);
        
        %CENTER BLADES
        \node (centermiddle) at ([shift={(180:5)}]centerright) {};
        
        \node (umiddle) at (centermiddle) {$\bullet$};
        \draw[-latex] ([shift=({150:1})]umiddle.center) node[left] {$\cdots$} to node[midway, above] {$\alpha$} (umiddle) node[below] {$v$};
        \draw[-latex,dashed] ([shift=({210:1})]umiddle.center) node[left] {$\cdots$} to (umiddle);
        \draw[-latex,dashed] (umiddle) -- ([shift=(0:1)]umiddle.center) node[right] {$\cdots$};

        %%%%LEFT LOOP
        \node (centerleft) at ([shift=(180:5)]centermiddle) {};
        \node (bottomleft) at ([shift=(-45:1)]centerleft) {\(\bullet\)};
        \node (middleleft) at ([shift=(0:1)]centerleft) {\(\bullet\)};
        \node (topleft) at ([shift=(45:1)]centerleft) {\(\bullet\)};

        \draw[-latex] (centerleft) ++(-75:1) arc (-75:-45:1);
        \draw[-latex] (centerleft) ++(-45:1) arc (-45:0:1);
        \draw[-latex] (centerleft) ++(0:1) arc (0:45:1);
        \draw[-latex] (centerleft) ++(45:1) arc (45:75:1);
        \draw[-latex,dashed] (bottomleft) -- ([shift=(-45:1)]bottomleft.center) node[right] {$\cdots$};
        \draw[-latex,dashed] (middleleft) -- ([shift=(0:1)]middleleft.center) node[right] {$\cdots$};
        \draw[-latex,dashed] (topleft) -- ([shift=(45:1)]topleft.center) node[right] {$\cdots$};
        
        \draw[dotted] (centerleft) ++ (-80:1) arc (-80:-280:1);
        \end{tikzpicture}\]
        
        \bigskip
        
        \[\begin{tikzpicture}
        %%%%RIGHT LOOP
        \node (centerright) at (0,0) {};
        \node (uright) at ([shift=(225:1)]centerright) {\(\bullet\)};
        \node (middleright) at ([shift=(180:1)]centerright) {\(\bullet\)};
        \node (wright) at ([shift=(135:1)]centerright) {\(\bullet\)};

        \draw[-latex] (centerright) ++(255:1) arc (255:225:1);
        \draw[-latex] (centerright) ++(225:1) arc (225:180:1);
        \draw[-latex] (centerright) ++(180:1) arc (180:135:1);
        \draw[-latex] (centerright) ++(135:1) arc (135:105:1);
        \draw[-latex,dashed] (centerright) ++(225:2) node[left] {$\cdots$} -- (uright);
        \draw[-latex,dashed] ([shift=(180:1)]middleright.center) node[left] {$\cdots$} -- (middleright);
        \draw[-latex,dashed] (centerright) ++(135:2) node[left] {$\cdots$} -- (wright);
        
        \draw[dotted] (centerright) ++ (100:1) arc (100:-100:1);
        
        %CENTER BLADES
        \node (centermiddle) at ([shift={(180:5)}]centerright) {};
        
        \node (A) at ([shift=(0:1)]centermiddle) {};
        \node (v1) at ([shift=(150:1)]A) {$\bullet$};
        \node (v2) at ([shift=(210:1)]A) {$\bullet$};
        \draw[-latex] ([shift=({180:1})]v1.center) node[left] {$\cdots$} to node[midway, above] {$\alpha'$} (v1) node[above] {$v_1$};
        \draw[-latex] (v1) to (A);
        \draw[-latex,dashed] ([shift=({180:1})]v2.center) node[left] {$\cdots$} to (v2) node[below] {$v_2$};
        \draw[-latex,dashed] (v2) -- (A) node[right] {$\cdots$};

        %%%%LEFT LOOP
        \node (centerleft) at ([shift=(180:5)]centermiddle) {};
        \node (bottomleft) at ([shift=(-45:1)]centerleft) {\(\bullet\)};
        \node (middleleft) at ([shift=(0:1)]centerleft) {\(\bullet\)};
        \node (topleft) at ([shift=(45:1)]centerleft) {\(\bullet\)};

        \draw[-latex] (centerleft) ++(-75:1) arc (-75:-45:1);
        \draw[-latex] (centerleft) ++(-45:1) arc (-45:0:1);
        \draw[-latex] (centerleft) ++(0:1) arc (0:45:1);
        \draw[-latex] (centerleft) ++(45:1) arc (45:75:1);
        \draw[-latex,dashed] (bottomleft) -- ([shift=(-45:1)]bottomleft.center) node[right] {$\cdots$};
        \draw[-latex,dashed] (middleleft) -- ([shift=(0:1)]middleleft.center) node[right] {$\cdots$};
        \draw[-latex,dashed] (topleft) -- ([shift=(45:1)]topleft.center) node[right] {$\cdots$};
        
        \draw[dotted] (centerleft) ++ (-80:1) arc (-80:-280:1);
        \end{tikzpicture}\]
        In this manner,
        \[N_E(n,v^E,w^E)=N_F(n,v^E,w^E).\]
        
        Similarly, out-splitting at vertices in the interior of the trails will preserve the number and length of the trails, so performing out-splits at these vertices %in the interior of the trails 
        also preserves \(\approx\)-equivalence classes.
        
        If we perform an in-split at a vertex \(v\) in the sink cycle, then any trail \(\alpha\) which passes through \(v\) is transformed into a trail \(\alpha'\) of length \(|\alpha|+1\), with its range moved one step along the sink cycle, while all other trails remain the same (see Figure \ref{fig.insplit.sink.loop}). Consequently,
        \[|\alpha|+|v^Es(\alpha)|+|r(\alpha)w^E|=|\alpha'|+|v^Es(\alpha')|+|r(\alpha')w^E|\mod q^E\]
        (by the same argument as in item (1), considering the cases where \(r(\alpha)\) equals \(w^E\) or not), so the equality also holds \(\mod \p_E\). Again, we obtain, for all \(n\in\mathbb{Z}/ \p_E\),
        \(N_E(n,v^E,w^E)=N_F(n,v^E,w^E).\)
        
        Out-splits at vertices in the source cycle are dealt with similarly. In-splits at the source cycles are not permitted,  nor are out-splits at the sink cycle,  as all vertices in the source cycle have $|r^{-1}(v)|=1$, and vertices in the sink cycle have $|s^{-1}(v)|=1$. Moreover, item (1) says that we do not need to preserve the selected points of the cycles to preserve \(\approx\)-classes.
        
        Thus, in- and out-splits preserve \(\approx\)-classes. As amalgamations are the reverse procedures to splits, they also preserve \(\approx\)-classes. Transitivity of \(\approx\) yields the desired result.\qedhere
    \end{enumerate}
\end{proof}

Therefore, the \(\approx\)-class of any pointed meteor graph does not depend on the pointings: It depends solely on the underlying graph itself. We thus have the relation
\begin{align*}
    E\approx F\iff
        &\text{for some (equivalently, any) choices of }v^E\in C_0^E\text{, }w^E\in C_1^E\text{, }v^F\in C_0^F\text{, }w^F\in C_1^F\text{,}\\
        &\text{we have }(E,v^E,w^E)\approx(F,v^F,w^F).
\end{align*}

Our goal is to prove that the ``geometric'' equivalence relation \(\approx\) classifies Leavitt path algebras of meteor graphs up to graded Morita equivalence. %{\color{blue}Define lawnmower form. Or -- delete the discussion of $\clubsuit$?} 

% To this end, note that for meteor graphs in lawnmower form, the \(\approx\) relation becomes somewhat simpler: Let \(E\) be a meteor graph in lawnmower form, with source cycle of length \(p^E\), sink cycle of length \(q^E\), \(g^E=\operatorname{gcd}(p^E,q^E)\), the common source and range of its trails \(v^E\) and \(w^E\), respectively, and similarly for \(F,p^F,q^F,g^F,v^F,w^F\). Then 
% \begin{center}
%     \begin{tabularx}{\linewidth}{c c X c}
%     \(E\approx F\)
%         &\(\iff\)
%         &\((p^E,q^E)=(p^F,q^F)\) (and \(g:=g^E=g^F\)) and there exists \(t\in\mathbb{Z}\!/g\) such that for all \(n\in\mathbb{Z}/g\), the number of trails of \(E\) whose length \(\mathrm{mod}\ g\) is \((n+t)\ \mathrm{mod}\ g\) is the same as the number of trails of \(F\) whose length \(\mathrm{mod}\ g\) is \(n\ \mathrm{mod}\ g\).
%         &(\(\clubsuit\))\label{condition.approx.normal}
%     \end{tabularx}
% \end{center}

\begin{proposition}
\label{prop.conditiongcdimpliestransform}
Let $E$ and $F$ be meteor graphs. 
    If $E \approx F$, then there is a sequence of in- and out-splits and -amalgamations which transforms \(E\) into \(F\).
\end{proposition}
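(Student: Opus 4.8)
The plan is to exhibit, for each pair of cycle lengths $(p,q)$ and each nonzero function $\nu\colon\Z/\!\p\to\N$ with $\p=\gcd(p,q)$, a single \emph{canonical} meteor graph to which every meteor graph carrying that data can be transformed by in- and out-splits and -amalgamations, and then to reduce both $E$ and $F$ to such a form. Explicitly, I would let $\mathrm{Can}(p,q,\nu)$ be the meteor graph with a source cycle $C_0$ of length $p$, a sink cycle $C_1$ of length $q$, distinguished vertices $v_0\in C_0^0$ and $w_0\in C_1^0$, and, for each $n\in\Z/\!\p$, exactly $\nu(n)$ trails from $v_0$ to $w_0$ of length $pq+\bar n$, where $\bar n\in\{1,\dots,\p\}$ represents $n$. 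One checks that this describes a well-defined isomorphism class of meteor graphs and that $N_{\mathrm{Can}(p,q,\nu)}(\cdot,v_0,w_0)=\nu$.

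The heart of the argument would be the \textbf{Claim}: every meteor graph $G$, with any distinguished vertices $v\in C_0^G$ and $w\in C_1^G$, can be transformed by in- and out-splits and -amalgamations into $\mathrm{Can}(p^G,q^G,N_G(\cdot,v,w))$. Granting this, if $E\approx F$ then $p^E=p^F=:p$, $q^E=q^F=:q$, and $N_E(n+t,v^E,w^E)=N_F(n,v^F,w^F)$ for some $t$ and all $n$; by the computation in the proof of Proposition~\ref{prop.graphmovesmakeconditiongcd}(1), moving $w^F$ a suitable number of steps along $C_1^F$ shifts $N_F(\cdot,v^F,w^F)$ so as to absorb $t$, and after this repointing $N_E(\cdot,v^E,w^E)=N_F(\cdot,v^F,w^F)=:\nu$. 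The Claim then sends both $E$ and $F$ to the single graph $\mathrm{Can}(p,q,\nu)$, and concatenating the two transformations (one reversed) turns $E$ into $F$.

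To prove the Claim I would argue as follows. By Proposition~\ref{prop:convert-to-normal} we may assume $G$ is in normal form; performing out-splits and -amalgamations along $C_0$ we can also move the common source of all trails to $v$, and a single sweep of in-splits around $C_1$ starting from $w$ then moves all trail-endpoints to $w$. After these moves the trails are precisely a bundle from $v$ to $w$ whose lengths are, individually, congruent modulo $\p$ to the lengths of the minimal $v$--$w$ paths they arose from, so the multiset of these lengths modulo $\p$ is exactly $N_G(\cdot,v,w)$ (this residue data being invariant under all the moves, up to a global shift, by Proposition~\ref{prop.graphmovesmakeconditiongcd}(2)). It then remains to reset each trail's length to the prescribed value $pq+\bar n$. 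Running a trail once all the way around $C_0$ changes its length by $\pm p$, and once around $C_1$ by $\pm q$; since the target length $pq+\bar n$ exceeds the current one, is congruent to it modulo $\p$, and their difference is at least $q(p-1)\ge\p(p'-1)(q'-1)$ (writing $p=\p p'$, $q=\p q'$), the Chicken McNugget theorem expresses the difference as a nonnegative combination $ap+bq$, and performing $a$ forward loops around $C_0$ and $b$ forward loops around $C_1$ on that trail raises its length by exactly the right amount. To run a single trail around $C_1$ one first isolates it from any other trail sharing its endpoint, by an in-amalgamation merging its terminal edge with the appropriate edge of $C_1$. This produces $\mathrm{Can}(p,q,N_G(\cdot,v,w))$.

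The step I expect to require the most care is the detailed verification behind the parenthetical remarks: confirming that the in-/out-split and -amalgamation moves have exactly the stated local effect on a meteor graph (moving one trail's source back along $C_0$, or all trails at a given vertex forward along $C_1$, and so on), and that the isolation move does let one act on a single trail without disturbing the others. Because every lengthening step in the forward procedure is undone only in the reversed half of the argument, one must also check that no intermediate graph is ever shortened into an invalid configuration, but this is routine. The number-theoretic ingredient (the Chicken McNugget/Frobenius bound) is elementary, and the talented-monoid theory of the earlier sections is used only through the already-established invariance, up to a global shift, of $N_G$ under the graph moves.
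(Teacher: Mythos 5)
Your proposal is correct and follows essentially the same strategy as the paper: reduce both graphs, via the moves of Proposition \ref{prop:convert-to-normal} and the observation that a full loop of out-splits around $C_0$ (resp.\ in-splits around $C_1$) lengthens a chosen trail by $p$ (resp.\ $q$) while returning its endpoints, to a single canonical representative of the $\approx$-class. The differences are cosmetic: the paper's target is the normal form (all trails of length $1$, with ranges distributed around the sink cycle so that only the counts $k_n \bmod \p$ matter), and it realizes a length change of $\p$ via the B\'ezout identity $\widetilde{p}p+\widetilde{q}q=\p+Np$ followed by $N$ shortenings, whereas your target $\mathrm{Can}(p,q,\nu)$ has all trails sharing both endpoints with lengths $pq+\bar n$, and your Frobenius/Chicken-McNugget bound lets you reach it by lengthening only, which neatly sidesteps the need to check that shortenings stay in valid configurations. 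One small mechanical slip: to advance a single trail's range around $C_1$ you do not need a preliminary in-amalgamation to ``isolate'' it --- you simply choose the in-split partition of $r^{-1}(w)$ that places that trail's terminal edge alone in one class and the cycle edge together with the remaining trail edges in the other; with that correction the verification of the local effect of each move is exactly the one carried out in the proof of Proposition \ref{prop.graphmovesmakeconditiongcd}.
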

\begin{proof}
We begin by noting that, in a meteor graph with $p$ edges in the source cycle and $q$ edges in the sink cycle, in- and out-splits and -amalgamations can be used to lengthen or shorten any trail by  $\p_E:= \gcd(p, q)$ edges.    By B\'ezout's Theorem, there are positive integers \(\widetilde{p}\), \(\widetilde{q}\) and \(N\) such that \(\widetilde{p}p+\widetilde{q}q=%\operatorname{gcd}(p,q)
\p_E+Np\). The move of lengthening a trail by  \(\p_E%=\operatorname{gcd}(p, q)
\) edges can thus be attained using in- and out-splits and amalgamations in the following manner:
     \begin{enumerate}[label=(\roman*)]
      \item Lengthen the trail by \(p\) edges \(\widetilde{p}\) times, by out-splitting at the source cycle.
       \item Lengthen the trail by \(q\) edges \(\widetilde{q}\) times, by in-splitting at the sink cycle.
    \item Shorten the trail by  \(p\) edges \(N\) times, by out-amalgamation.
    \end{enumerate}
    It follows that we can also shorten any path from $v_E$ to $w_E$ by  $\p_E$ edges, by using amalgamations rather than splittings (and vice versa) in the argument above. 

    If $E \approx F$, then we have $\p_E = \p_F =: \p$. Furthermore, by using the above procedure to repeatedly lengthen or shorten from the trails in $E$ and $F$ by $\p$ edges, we may assume the existence of $t \in \Z/\p$ such that, for all $n \in \Z$, there are precisely as many minimal-length paths of length $n+t$ from $v^E$ to $w^E$, as there are paths of length $n$ from $v^F$ to $w^F$.

   Furthermore, if $E \approx F$, then by Propositions \ref{prop:convert-to-normal} and \ref{prop.graphmovesmakeconditiongcd}, we may assume $E, F$ are in normal form, and that all trails of $E$ (respectively $F$) are sourced at $v_E$ (respectively $v_F$). Number the vertices in the sink cycle moving backwards from $w_E =: w_0^E;$ suppose there are $k_n^E$ trails with range $w_n^E$ (and similarly, there are $k_n^F$ trails with range $w_n^F$).  That is, for all $n$, there are $k_n^E$ paths from $v^E$ to $w^E$ of length $n+1$, and similarly in $F$.  Consequently, with the choice of $t$ indicated in the previous paragraph, we have 
\[ k_{n+t}^E = k_n^F \quad \text{ for all } n \in \Z/\p.\]
 Thus, the map which sends $v_E$ to $v_F$ (and, for all $n$, sends $v_n^E$ to $v_n^F$) and sends $w_n^E$ to $w_{n-t}^F$ induces an isomorphism between (the normal forms of) $E$ and $F$.
\end{proof}

We are now in a position to state our main theorem. 

\begin{theorem}\label{alergy1}
    Let \(E\) and \(F\) be essential graphs, where $E$ is a meteor graph. Then the following are equivalent.
    \begin{enumerate}[label=(\arabic*)]
        \item The Leavitt path algebras \(L(E)\) and \(L(F)\) are graded Morita equivalent,
        \item There is an order-preserving 
        $\mathbb Z[x,x^{-1}]$-module isomorphism
$K_0^{\gr}(L(E))\rightarrow K_0^{\gr}(L(F))$,        
        \item The talented monoids \(T_E\) and \(T_F\) are \(\mathbb{Z}\)-isomorphic,

        \item The graphs \(E\) and \(F\) are shift equivalent,
        
        \item The graphs \(E\) and \(F\) are strongly shift equivalent.
        
    \end{enumerate}
    \label{thm:main}
\end{theorem}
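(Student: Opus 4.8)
The plan is to close the cycle of implications $(5)\Rightarrow(1)\Rightarrow(2)\Leftrightarrow(3)\Leftrightarrow(4)\Rightarrow(5)$, of which only $(4)\Rightarrow(5)$ requires the structure theory built above; the rest I would dispatch using known facts. For $(5)\Rightarrow(1)$: by Williams' Theorem~\ref{willimove}, strong shift equivalence means $E$ is obtained from $F$ by a finite sequence of in- and out-splittings and their inverses, and each such move induces a graded Morita equivalence of the associated Leavitt path algebras (indeed out-splitting yields a graded isomorphism and in-splitting a graded Morita equivalence), so $L(E)$ and $L(F)$ are graded Morita equivalent. The implication $(1)\Rightarrow(2)$ is the fact that $K_0^{\gr}$, with its order and its $\mathbb{Z}[x,x^{-1}]$-module structure, is a graded Morita invariant. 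For $(2)\Leftrightarrow(3)$: by~\cite{hazli} the positive cone of $K_0^{\gr}(L(E))$ is $\mathbb{Z}$-monoid isomorphic to $T_E$, and since $T_E$ is cancellative an order-preserving $\mathbb{Z}[x,x^{-1}]$-module isomorphism of the groups restricts to a $\mathbb{Z}$-monoid isomorphism of positive cones, while conversely such a monoid isomorphism extends to the group completions and respects their module structures. Finally $(4)\Leftrightarrow(2)$ is exactly Corollary~\ref{h99} (via Theorem~\ref{mmpags}). Thus everything reduces to proving: \emph{if $E$ is a meteor graph shift equivalent to $F$, then $E$ and $F$ are strongly shift equivalent}.

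For $(4)\Rightarrow(5)$, suppose $E$ and $F$ are shift equivalent. By Proposition~\ref{goldenprop}, $F$ is also a meteor graph, and by the equivalences above there is a $\mathbb{Z}$-monoid isomorphism $T_E\cong T_F$, equivalently an order-preserving $\mathbb{Z}[x,x^{-1}]$-module isomorphism $\phi\colon K_0^{\gr}(L(E))\to K_0^{\gr}(L(F))$. Using Corollary~\ref{cor:normal-form-rep}, I would first replace $E,F$ by meteor graphs $N_E,N_F$ in normal form with $T_{N_E}\cong T_E$, $T_{N_F}\cong T_F$; since $N_E$ (respectively $N_F$) is obtained from $E$ (respectively $F$) by in- and out-splittings and amalgamations, Williams' Theorem shows $N_E$ is strongly shift equivalent to $E$ and $N_F$ to $F$, so it suffices to treat the case where $E,F$ are already in normal form. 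Fix $v_E\in C_0^E$, $w_E\in C_1^E$ and likewise for $F$. By Lemma~\ref{lem:unique-min}, $w_E$ and $w_F$ are the unique $\mathbb{Z}$-minimum elements, so $\phi(O(w_E))=O(w_F)$; after composing $\phi$ with a shift I may assume $\phi(w_E)=w_F$, hence $\phi(w_E(i))=w_F(i)$ for all $i$ by $\mathbb{Z}$-equivariance. The period of $w_E$ is $q^E=|C_1^E|$ and that of $w_F$ is $q^F$, so $q^E=q^F$. Since $\phi$ carries the order ideal $\langle w_E\rangle$ onto $\langle w_F\rangle$, it descends to an isomorphism $T_E/\langle w_E\rangle\to T_F/\langle w_F\rangle$; because $E\setminus\Sigma\{w_E\}\cong C_0^E$ has no exit, $T_E/\langle w_E\rangle$ is the $\mathbb{Z}$-cyclic monoid of rank $p^E=|C_0^E|$, and comparing ranks gives $p^E=p^F$. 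Hence $\p_E=\p_F=:\p$, and I set $p:=p^E=p^F$, $q:=q^E=q^F$.

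Next I would read off the module presentation. Passing to the Grothendieck groups (which equal the group completions of the cancellative monoids $T_E,T_F$) and writing $x$ for the degree-shift action, Proposition~\ref{unipresen} shows that $K_0^{\gr}(L(E))$ is generated over $\mathbb{Z}[x,x^{-1}]$ by $[v_E]$ and $[w_E]$, subject precisely to $(1-x^{q})[w_E]=0$ and $(1-x^{p})[v_E]=P_E(x)[w_E]$, where $P_E(x)=\sum_{\alpha}x^{L_\alpha}$, the sum over the trails $\alpha$ of $E$ and $L_\alpha$ the length of the shortest path from $v_E$ to $w_E$ through $\alpha$; thus $P_E$ records the numbers $N_E(n,v_E,w_E)$, read modulo $q$, and the same description holds for $F$ with the same $p,q$. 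From $\phi(w_E(i))=w_F(i)$, $\phi$ restricts to the identity on $\langle[w_E]\rangle\cong\mathbb{Z}[x,x^{-1}]/(1-x^{q})$; and since $\phi(\langle[w_E]\rangle)=\langle[w_F]\rangle$, it descends to an order-preserving automorphism of $\mathbb{Z}[x,x^{-1}]/(1-x^{p})$, which must be multiplication by $x^{a}$ for some $a$ (the only such automorphisms are the $\pm x^{a}$, and order-preservation forces the sign). Hence $\phi([v_E])=x^{a}[v_F]+z$ for some $z\in\langle[w_F]\rangle$. Applying $\phi$ to $(1-x^{p})[v_E]=P_E(x)[w_E]$ and using $(1-x^{q})[w_F]=0$ yields $P_E(x)\equiv x^{a}P_F(x)\pmod{(1-x^{p},1-x^{q})}$. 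Since $(1-x^{p},1-x^{q})=(1-x^{\p})$ in $\mathbb{Z}[x,x^{-1}]$, the congruence holds modulo $1-x^{\p}$; reducing there, $P_E$ and $P_F$ become the generating functions of $\big(N_E(n,v_E,w_E)\big)_{n\in\mathbb{Z}/\p}$ and $\big(N_F(n,v_F,w_F)\big)_{n\in\mathbb{Z}/\p}$, and the congruence says these sequences agree up to the cyclic shift $a$. That is precisely condition~\eqref{eq:condition-on-normal-form}, so $E\approx F$; Proposition~\ref{prop.conditiongcdimpliestransform} then gives a sequence of in- and out-splittings and amalgamations taking $E$ to $F$, whence Williams' Theorem~\ref{willimove} shows $X_E$ is conjugate to $X_F$, i.e.\ $E$ and $F$ are strongly shift equivalent, closing the cycle.

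The main obstacle is the middle step: establishing that $K_0^{\gr}(L(E))$ carries \emph{no} $\mathbb{Z}[x,x^{-1}]$-module relation beyond the two displayed ones — this is exactly where cancellativity of $T_E$ and the uniqueness assertion in Proposition~\ref{unipresen} are indispensable — and then controlling an arbitrary order-preserving isomorphism $\phi$ tightly enough, via the intrinsic order ideal $\langle[w_E]\rangle$, to guarantee that what survives modulo $1-x^{\p}$ is exactly the $\approx$-invariant and nothing finer. The normal-form reductions and the translation of the graph moves into shifts of the polynomial $P_E$ should be routine given Propositions~\ref{prop:convert-to-normal}, \ref{prop.graphmovesmakeconditiongcd}, and \ref{prop.conditiongcdimpliestransform}.
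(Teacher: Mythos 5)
Your chain of implications and all of the easy arrows coincide with the paper's; the interesting divergence is in $(4)\Rightarrow(5)$. There you follow the paper up through the normal-form reduction, the use of the unique $\Z$-minimum $w_E$ to get $q^E=q^F$, and the quotient by $\langle w_E\rangle$ to get $p^E=p^F$, but then you replace the paper's argument --- which realizes $T_E$ as the inductive limit $\varinjlim(\N^{p+q},A_E)$, extracts an intertwiner $C$ with $CA_E=A_FC$, and grinds through the block structure of $C$ to reach Equation \eqref{eq:B-forces-edges} --- by a presentation of $K_0^{\gr}(L(E))$ as the $\Z[x,x^{-1}]$-module on generators $[v_E],[w_E]$ with relations $(1-x^q)[w_E]=0$ and $(1-x^p)[v_E]=P_E(x)[w_E]$, from which applying $\phi$ yields $P_E(x)\equiv x^aP_F(x)$ modulo $(1-x^p,1-x^q)=(1-x^{\p})$, which is exactly condition \eqref{eq:condition-on-normal-form}. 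This is the same computation in cleaner clothing: your ideal identity $(1-x^p,1-x^q)=(1-x^{\p})$ does in one line what the paper's B\'ezout manipulation of the entries $B_{p,j-1}=B_{p,j+cq-1}=B_{p,j+dp-1}$ does, and the polynomial congruence is the generating-function form of \eqref{eq:B-forces-edges}. What your route buys is transparency about \emph{why} only the residues mod $\p$ survive; what it costs is that you must actually prove the presentation is complete ("subject precisely to"), which you correctly flag as the main obstacle. That step does close: the abstractly presented module is an extension of $\Z[x,x^{-1}]/(1-x^p)\cong\Z^p$ by $\Z[x,x^{-1}]/(1-x^q)\cong\Z^q$, hence free abelian of rank $p+q$, and the canonical surjection onto $K_0^{\gr}(L(E))\cong\Z^{p+q}$ (free of the same rank by the uniqueness in Proposition \ref{unipresen}) is therefore an isomorphism. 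Two small corrections: the $\Z[x,x^{-1}]$-module automorphisms of $\Z[x,x^{-1}]/(1-x^p)$ are multiplication by units of the group ring $\Z[\Z/p]$, which for general $p$ is strictly larger than $\{\pm x^a\}$; the statement you actually need --- that an \emph{order-preserving} $\Z$-equivariant automorphism of the $\Z$-cyclic monoid of rank $p$ is a shift $x^a$ --- is true because such a map must permute the atoms and commute with the cyclic shift. Also take care with the sign convention relating the $\Z$-action ${}^n$ to multiplication by $x^n$ so that the shift $a$ you produce matches the $t$ of \eqref{eq:condition-on-normal-form}; this is bookkeeping only. With those points addressed, your argument is a valid and arguably more conceptual proof of the one implication that carries all the content.
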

\begin{proof}

{\color{white},}

\begin{enumerate}

    \item[(1)\(\Rightarrow\)(2)] Follows from the graded Morita theory: the graded Morita equivalence gives rise to invertible bimodules, which in turn induce an isomorphism on the level of graded $K_0$ 
    (see~\cite[Theorem 2.3.7]{hazbk}).

   \item[(2)\(\Leftrightarrow\)(3)]
   
   By~\cite{hazli} the positive cone of the graded Grothendieck group $K_0^{\gr}(L(E))$ is $\mathcal V^{\gr}(L(E))$ and 
  $ \mathcal V^{\gr}(L(E)) \cong T_E$. This implies the equivalence.

   \item[(3)\(\Leftrightarrow\)(4)] This follows from Corollary~\ref{h99} and (2)\(\Leftrightarrow\)(3).

\item[(4)\(\Rightarrow\)(5)] If $E$ and $F$ are shift equivalent then, by Proposition~\ref{goldenprop}, $F$ is also a meteor graph and, by the equivalence of (3) and (4), their talented monoids are $\mathbb Z$-isomorphic. 
    Suppose that $\varphi: T_E \to T_F$ is a $\Z$-isomorphism. Denote by $p_E$ and $q_E$ the length of the source and sink cycles of $E$, respectively, and similarly $p_F$ and $q_F$ for $F$.  
    Thanks to Corollary~\ref{cor:normal-form-rep}, we may assume that $E$ and $F$ are meteor graphs in normal form (with only edges connecting the two cycles).  Indeed, we will assume that all trails in $E$ have source $v_E$, and all trails in $F$ have source $v_F$. 
    Then, Lemma~\ref{lem:unique-min} implies that $\varphi({}^0 w_E) = {}^r w_F$ for some $0\leq r \leq q_F-1$.
    Since $q_E \in \N$ is the smallest positive integer such that ${}^{q_E} w_E = w_E$, and similarly $q_F$ is the smallest positive integer satisfying ${}^{q_F} w_F  = w_F,$ it follows that $q_E = q_F =:q.$

     Now, consider the quotient of $T_F$ by the order ideal $\langle w_F \rangle = \{ {}^i w_F: 0 \leq i \leq q-1\}$. Observe that, for any vertex $v$ on the source cycle, $v \not \in \langle w_F\rangle$, and so  $T_F /\langle w_F\rangle  \cong T_{C_0^F} % is isomorphic to the talented monoid of the single cycle of length $p_F$, which is 
     \cong \bigoplus_{k=0}^{p_F-1}\mathbb{N}.$ %with the action of \(\mathbb{Z}\) as a shift \(\mod p_F\), which has rank \(p_F\). But t
     Similarly, \(T_E/\!\langle w_E\rangle\) is the $\Z$-cyclic monoid of rank \(p_E\). Since these two monoids are isomorphic, %we obtain that 
     \(p_E=p_F=: p\).

Next, we show that  
\begin{equation}\varphi(v_E) = {}^{i } v_F + \sum_{j=1}^q \ell_j ( {}^j w_F), \label{eq:phi-vE-def}
 \end{equation} 
 for some $i \in \Z$ and $\ell_j \in \N \cup \{ 0\}
    $.  

Suppose that
$\varphi(v_E) =  \sum_{i=0}^n c_i v_F(r_i) + \sum_{i=1}^m d_i w_F(s_i).$
Since $\varphi$ induces an isomorphism  $\overline \varphi: T_E/\langle w_E \rangle  \to T_F/\langle w_F \rangle$, passing to the quotient we have that
$\overline{\varphi}(v_E) =  \sum_{i=0}^n c_i v_F(r_i) .$ Since $v_E$ is a minimal element in $T_E/\langle w_E \rangle$, it follows that $n=0$ and $c_0= 1$. This gives the identity (\ref{eq:phi-vE-def}).        

Notice that the map $T_F \to T_F$ given by $x \mapsto {}^{-i} x$, for a fixed $i \in \Z$, is a $\Z$-isomorphism.  Composing with this map, we may assume that $\varphi(v_E) =  v_F +\sum_{j=1}^q \ell_j ( {}^j w_F). $

Recall that  $T_E = M_{\overline E}$ is the inductive limit of  
   \[ \cdots \longrightarrow \N^{p+q} \stackrel{A_E}{\longrightarrow} \N^{p+q} \stackrel{A_E}{\longrightarrow} \N^{p+q} \longrightarrow \cdots, \]
   where $A_E$ denotes the adjacency matrix of $E$.
Furthermore, since $A_E = \begin{pmatrix}
   C_p & * \\ 0 & C_q
   \end{pmatrix}$, where $$C_k = \begin{pmatrix}
   0 & 1 & & \cdots & \\
   & 0 & 1 & & \\
   & & \ddots & \vdots \\
   1 & 0 & \cdots & &
   \end{pmatrix}$$ denotes the adjacency matrix of the cycle of length $k$, the matrix $A_E$ is invertible (as the source and sink cycles have lengths at least $1$).  Along with the Confluence Lemma, this implies that the canonical inclusion maps $\iota_E^n: \N^{p+q} \to T_E$ associated to the inductive limit are all injective. 
  As the copies of $\N$ in $\N^{p+q}$ represent the vertices of $E$, these inclusions are related to the $\mathbb{Z}$-action via the formula
   \begin{equation}\label{eq.inclusions_and_action}\alpha_1\circ\iota_E^n = \iota_E^{n+1},\end{equation}
   where $\alpha_1\colon T_E\to T_E$ denotes the action of $1\in \Z$. Similar facts hold for the graph $F$.
   
   Choosing $n$ large enough so that $\text{im}\, \iota^n_F \supseteq\text{im}\, \varphi \circ \iota^0_E$, we obtain an injective map 
   \[ C := (\iota^n_F)^{-1} \circ \varphi \circ (\iota^0_E) : \N^{p+q} \to \N^{p+q}.\] % Changed $C_{0,n}$ to $C$
   %Moreover, the universal property of the inductive limit implies that 
   %\[C_{1, n+1} = A_F^{-1} \circ C_{0,n} \circ A_E.\]
   (In the equation above, we implicitly restrict the codomains of $\varphi$ and $\iota_F^n$ to the image of $\iota_F^n$.)
    Let us prove that $CA_E=A_F C$. On one hand,
    \begin{align*}
       \iota_F^{n+1}\circ C\circ A_E
            &=\iota_F^{n+1}\circ (\iota^n_F)^{-1} \circ \varphi \circ (\iota^0_E) \circ A_E\\
            &=\alpha_1\iota_F^n\circ (\iota^n_F)^{-1} \circ \varphi \circ (\iota^0_E) \circ A_E\tag{Equation \eqref{eq.inclusions_and_action}}\\
            &=\alpha_1\circ \varphi \circ (\iota^0_E) \circ A_E\\
            &=\varphi\circ\alpha_1\circ (\iota^0_E) \circ A_E\tag{$\varphi$ a $\mathbb{Z}$-isomorphism}\\
            &=\varphi\circ(\iota^1_E) \circ A_E\tag{Equation \eqref{eq.inclusions_and_action}}\\
            &=\varphi\circ(\iota^0_E).\tag{$T_E$ is a direct limit}
    \end{align*}
    On the other hand,
    \begin{align*}
        \iota_F^{n+1}\circ A_F\circ C
            &=\iota_F^{n+1}\circ A_F\circ (\iota_F^n)^{-1}\circ\varphi\circ\iota_E^0\\
            &=\iota_F^n\circ (\iota_F^n)^{-1}\circ\varphi\circ\iota_E^0.\tag{$T_F$ as a direct limit}\\
            &=\varphi\circ\iota_E^0
    \end{align*}
    This shows that $\iota_F^{n+1}\circ C\circ A_E=\varphi\circ\iota_E^0=\iota_F^{n+1}\circ A_F\circ C$. As $\iota_F^{n+1}$ is injective, we conclude that 
%{\bf \underline{In fact, since $\varphi$ is a $\Z$-isomorphism, we obtain $C_{0,n} = C_{1, n+1} =: C$.}}\footnote{Prove this! See Luiz's board photo from May 24.}
%It follows that 
\begin{equation}
\label{eq:C-intertwines} C A_E = A_F C.\end{equation}

Now, as $C$ is induced from $\varphi$, the facts above about $\varphi(v_E)$ and $\varphi(w_E)$ are reflected in $C$.  That is, 
\[ C = \begin{pmatrix}
I & B \\ 0 & \tilde{C}_r
\end{pmatrix},\]
where \(\displaystyle \tilde C_r = (C_q)^r= \sum_{i=1}^q E_{i, r+i\bmod q}\) represents the fact that $\varphi({}^0 w_E) = {}^r w_F$.
%  We conclude that $\det (C) = \pm 1$, so $C$ is invertible (over $\Z$).  It {\color{red}Does Not!!} follow that $A_E$ and $A_F$ are elementary shift equivalent:
%  \[ A_E = C^{-1} A_F C \qquad \text{ while } \qquad A_F = (A_F C) C^{-1}.\]
%  {\color{blue}The problem is that $C^{-1} \in M_n(\Z)$ will always have negative entries, and (elementary) shift equivalence requires positive entries.}
%  \[ C^{-1} = \begin{pmatrix}
%  I_p & - B C_r^{-1} \\ 0 & C_r^{-1}
%  \end{pmatrix}\]
%  Similarly, one can compute that $C^{-1} A_F = \begin{pmatrix} C_p & * \\ 0 & C_r^{-1} C_q \end{pmatrix}$,
%  where $* = - B C_r^{-1} C_q + \sum_{i=1}^q k_i^F E_{1i} $.  Thus $C^{-1} A_F$ again has negative entries always.
 
 Using these two facts, we now analyze the structure of $B$.  For each $1 \leq j \leq q$, write $k^E_j$ for the number of edges with source $v_E$ and range (the vertex on the sink-cycle equivalent to) ${}^j w_E$, and similarly for $F$.  
 % Note: {}^j w is the vertex which lies j steps **before** w
 Then, the upper right block $*$ in $A_E$ has at most one nonzero row (the first row): $(k^E_q \,  k^E_1 \, k^E_2  \cdots k^E_{q-1}).$
 
 Notice that
 Equation \eqref{eq:C-intertwines} implies that $C_p B + *_F \tilde C_r = *_E + B C_q,$ and consequently 
 \begin{align*} 
 &B_{2, j} + k^F_{j-r-1} = k^E_{j-1} + B_{1, j-1} & \text{ for } 1 \leq j \leq q; \\
& B_{i+1, j} = B_{i, j-1} & \text{ for } 2 \leq i \leq p-1, 1 \leq j \leq q; \\
& B_{1, j} = B_{p, j-1} & \text{ for } 1 \leq j \leq q.
 \end{align*}
 We conclude that for any $1\leq j \leq q$, we have 
 \begin{align*}
      B_{p, j-1} = B_{1,j} = B_{2, j+1} + k^F_{j-r}  - k^E_{j}  
      &= B_{3, j+2} +  k^F_{j-r}  - k^E_{j} = \cdots \\  \cdots &= B_{p, j+p-1} + k^F_{j-r} - k^E_{j},
    \end{align*}
 where the addition on the indices is performed modulo $q$.
 
 Applying the same logic to $B_{p, j+p-1},$ we obtain that 
 \[ B_{p, j-1} = B_{p, j +2p-1} + k^F_{j-r} - k^E_{j}+k^F_{j+p-r} -k^E_{j+p}.\]
 
 Now, write $g = \gcd(p,q)$ and write $p = cg, q = dg$, where $\gcd(c, d) = 1$.  Then 
 \[B_{p, j-1} = B_{p, j + cq -1} = B_{p, j+ dp -1}.\]  It  follows from the computations above that we must have 
 \begin{equation}
 \sum_{n=0}^{d-1} k^F_{np +j -r} = \sum_{n=0}^{d-1} k^E_{j+np}.
 \label{eq:B-forces-edges}
 \end{equation}
 Observe that %$(d-1) p = cdg - p = cq - p $, and that 
 for any $n$ we have $n p = n cg \equiv 0 \bmod g$.  Since the addition on the indices is performed modulo $q$,  w
 e conclude that the right-hand sum is precisely the number of paths in $E$ of length $j \pmod g$, and the left-hand sum is precisely the number of paths in $F$ of length $j-r \pmod g$.
 
  Consequently, for any meteor graphs for which $T_E \cong_\Z T_F$, Condition \eqref{eq:condition-on-normal-form} must hold (with $t = -r$).  By Proposition~\ref{prop.conditiongcdimpliestransform}, $E$ can be transformed into $F$ via in- and out-splittings and amalgamations. Now, Theorem~\ref{willimove}
    implies that the graphs $E$ and $F$ are strongly shift equivalent.

  \item[(5)\(\Rightarrow\)(1)] Since $E$ is strongly shift equivalent to $F$, there is a sequence of out-splittings, in-splittings, and the inverses of these, namely, out-amalgamations, and in-amalgamation (Theorem~\ref{willimove}) that transforms $E$ into $F$. By~\cite[Proposition 15]{hazd} it follows that $L(E)$ is graded Morita equivalent to $L(F)$.\qedhere
 \end{enumerate}
\end{proof}

Putting together our result with several established results in the literature, we can now relate the notions of Morita theory in algebras and operator algebras for the class of meteor graphs. This result is related to the questions on the relationship between the Morita theory of Leavitt path algebras and graph $C^*$-algebras (see~\cite[\S5.6]{lpabook} and \cite[Theorem~14.7]{eilers}). 

For our final result we use the equivariant $K$-theory, $K_0^{\mathbb T}$, for $C^*$-algebras \cite{chrisp}. For a finite graph $E$, 
there are canonical order isomorphisms of $\Z[x,x^{-1}]$-modules
\begin{equation}\label{noalergy}
K^{\gr}_0(L(E)) \cong K_0(L(E \times \mathbb Z)) \cong  K_0(C^*(E \times \mathbb Z))\cong K_0^{\mathbb T}(C^*(E)),
\end{equation}
where $E\times \mathbb Z$ is the covering graph of $E$. The above isomorphism has been established in several places in the literature (see for example~\cite[p. 275]{mathann}, \cite[Proof of Theorem A]{eilers2}).

\begin{corollary}\label{ruiz}
    Let $E$ and $F$ be essential graphs, where $E$ is a meteor graph. Then the Leavitt path algebras $L(E)$ and $L(F)$ are graded Morita equivalent if, and only if, the graph $C^*$-algebras $C^*(E)$ and $C^*(F)$ are equivariant Morita equivalent. 
\end{corollary}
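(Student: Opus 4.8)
The plan is to combine Theorem~\ref{alergy1} with the known behaviour of graph $C^*$-algebras under Williams' graph moves and with the comparison of invariants recorded in~\eqref{noalergy}, handling the two implications of the corollary separately.

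For the ``only if'' direction, I would argue as follows. Suppose $L(E)$ and $L(F)$ are graded Morita equivalent. Since $E$ is a meteor graph, the equivalence (1)$\Leftrightarrow$(5) of Theorem~\ref{alergy1} shows that $E$ and $F$ are strongly shift equivalent, so by Williams' Theorem~\ref{willimove} the graph $F$ is obtained from $E$ by a finite sequence of in-splittings, out-splittings, in-amalgamations and out-amalgamations. Each of these moves is known to lift to a gauge-equivariant (that is, $\mathbb{T}$-equivariant) Morita equivalence of the associated graph $C^*$-algebras; here one invokes the established move theory for graph $C^*$-algebras (see~\cite{comb,ef} and the references therein). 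Concatenating these equivariant Morita equivalences shows that $C^*(E)$ and $C^*(F)$ are equivariant Morita equivalent.

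For the ``if'' direction, suppose $C^*(E)$ and $C^*(F)$ are equivariant Morita equivalent. An equivariant Morita equivalence induces, via the associated imprimitivity bimodule (equivalently, via Julg's theorem and the resulting Morita equivalence of the crossed products by $\mathbb{T}$), an order-preserving $\mathbb{Z}[x,x^{-1}]$-module isomorphism $K_0^{\mathbb{T}}(C^*(E))\cong K_0^{\mathbb{T}}(C^*(F))$. Transporting this isomorphism along the canonical order $\mathbb{Z}[x,x^{-1}]$-module identifications of~\eqref{noalergy} yields an order-preserving $\mathbb{Z}[x,x^{-1}]$-module isomorphism $K_0^{\gr}(L(E))\cong K_0^{\gr}(L(F))$, which is precisely statement~(2) of Theorem~\ref{alergy1}. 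Since $E$ is a meteor graph, the implication (2)$\Rightarrow$(1) of that theorem gives that $L(E)$ and $L(F)$ are graded Morita equivalent, completing the proof.

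The only non-formal point, and hence the main obstacle, is the ``only if'' direction: one must know that each of Williams' in/out-splittings (and their amalgamation inverses) induces not merely a Morita equivalence but a $\mathbb{T}$-equivariant one of the graph $C^*$-algebras. The non-equivariant statement is classical (Bates--Pask-type arguments for flow equivalence and Morita equivalence of graph algebras), and it is the equivariant refinement -- compatibility with the gauge actions -- that matches the \emph{graded} rather than ungraded classification; this refinement is available in the cited literature, but one should take care that the equivalences and the induced $K$-theoretic isomorphisms respect the order structure. Every other step is a direct appeal to Theorem~\ref{alergy1} together with a routine chase through the identifications in~\eqref{noalergy}.
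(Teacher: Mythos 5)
Your proposal is correct and follows essentially the same route as the paper: graded Morita equivalence plus Theorem~\ref{alergy1} gives strong shift equivalence, the resulting in/out-splittings lift to an equivariant Morita equivalence of the graph $C^*$-algebras (the paper cites Bates--Pask for the Morita equivalence and Eilers--Ruiz \cite[Theorem 3.4]{ef} for the equivariant refinement you correctly flag as the one non-formal point), and the converse goes through the ordered $\mathbb{Z}[x,x^{-1}]$-module isomorphism of $K_0^{\mathbb{T}}$ and the identifications in~\eqref{noalergy} back to condition (2) of Theorem~\ref{alergy1}.
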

\begin{proof}
Suppose that the Leavitt path algebras $L(E)$ and $L(F)$ are graded Morita equivalent. By Theorem~\ref{alergy1}, $F$ is also a meteor graph and $E$ and $F$ are strongly shift equivalent. Then, from \cite[Theorem 3.2]{pask}, the in- and out-splittings we used to convert from $E$ to $F$ give a Morita equivalence between $C^*(E)$ and $C^*(F)$. Furthermore, \cite[Theorem 3.4]{ef} guarantees that this equivalence is equivariant. 

For the converse, suppose that the graph $C^*$-algebras $C^*(E)$ and $C^*(F)$ are equivariantly Morita equivalent. Then $K_0^{\mathbb T}(C^*(E)) \cong K_0^{\mathbb T}(C^*(F))$ as ordered $\mathbb Z[x,x^{-1}]$-modules (see \cite[p.~297]{comb} and \cite[Prop.~2.9.1]{chrisp}). From~(\ref{noalergy}) it follows that 
$K^{\gr}_0(L(E)) \cong K^{\gr}_0(L(F))$. Again, Theorem~\ref{alergy1} gives that the Leavitt path algebras $L(E)$ and $L(F)$ are graded Morita equivalent, as desired. 
\end{proof}

{\bf Acknowledgments.}
We thank Efren Ruiz for providing us with references for Corollary~\ref{ruiz}. 
This work resulted from a Research in Pairs stay at the Mathematisches Forschungsinstitut Oberwolfach.  E. Gillaspy was partially supported by the National Science Foundation grant DMS-1800749. D. Gon\c{c}alves was partially supported by  Fundação de Amparo \`a Pesquisa e Inova\c{c}\~ao do Estado de Santa
Catarina – FAPESC, Conselho Nacional de Desenvolvimento Cient\'ifico e Tecnol\'ogico (CNPq) and Capes-PrInt grant number 88881.310538/2018-01 - Brazil. R. Hazrat acknowledges Australian Research Council grant DP230103184.

\end{document}